\numberwithin{equation}{section}
\newtheorem{theorem}[equation]{Theorem}
\newtheorem{proposition}[equation]{Proposition}
\newtheorem{lemma}[equation]{Lemma} 
\newtheorem{conjecture}[equation]{Conjecture}
\theoremstyle{definition}
\newtheorem{example}[equation]{Example}
\newtheorem{remark}[equation]{Remark}
\newtheorem{definition}[equation]{Definition}
\newtheorem{naive}{Naive Persistent Whitehead Conjecture}
\newcommand\category[1]{\ensuremath{\mathbf{#1}}}
\DeclareMathOperator{\im}{im}
\DeclareMathOperator{\obj}{obj}
\DeclareMathOperator{\id}{Id}
\DeclareMathOperator{\Rips}{{\mathcal R}}
\newcommand{\Cech}{\mathcal{\check C}}
\DeclareMathOperator{\colim}{colim}
\DeclareMathOperator{\acolim}{fcolim}
\DeclareMathOperator{\hocolim}{hocolim}
\DeclareMathOperator{\Ner}{Ner}
\DeclareMathOperator{\fc}{\zeta}
\DeclareMathOperator{\Ex}{Ex}
\DeclareMathOperator{\Ho}{Ho}
\DeclareMathOperator{\ob}{ob}
\DeclareMathOperator{\comp}{\circ}
\DeclareMathOperator{\proj}{proj}
\DeclareMathOperator{\union}{\cup\ }
\newcommand{\bparagraph}[1]{\subsubsection*{#1}}
\newcommand{\SimpCat}[1]{\boldsymbol{[#1]}}
\newcommand{\kVect}[0]{\category{Vec}}
\newcommand{\Top}[0]{\category{Top}}
\newcommand{\crit}[0]{^\mathrm{crit}}
\newcommand{\Sb}[0]{\mathcal{S}}
\newcommand{\Int}[1]{\bar{#1}}
\newcommand{\Lan}[0]{L}
\newcommand{\ldf}[1]{\Lambda^{#1}}
\newcommand{\R}[0]{\mathbb R}
\newcommand{\B}[1]{\mathcal B\ifthenelse{\equal{#1}{}}{}{_{#1}}}
\newcommand{\Bopen}[1]{U\ifthenelse{\equal{#1}{}}{}{(#1)}}
\newcommand{\barc}[1]{\mathcal B\ifthenelse{\equal{#1}{}}{}{(#1)}}
\newcommand{\A}[0]{\mathbf I}
\newcommand{\Bb}[0]{\mathbf J}
\newcommand{\C}[0]{\mathbf C}
\newcommand{\D}[0]{\mathbf D}
\newcommand{\F}[0]{X}
\newcommand{\I}[0]{\mathbf I}
\newcommand{\J}[0]{\mathbf J}
\newcommand{\maptocolim}[2]{\mu^{#1}_{#2}}
\newcommand{\K}[0]{\mathbf K}
\newcommand{\Q}[0]{\mathbf Q}
\newcommand{\Z}[0]{\mathbb Z}
\newcommand{\ZCat}[0]{\mathbb Z}
\newcommand{\RCat}[0]{\mathbb{R}}
\newcommand{\CWCat}[0]{\category{CW}}
\newcommand{\Funs}[0]{\category{F}}
\newcommand{\Set}[0]{\category{Set}}
\newcommand{\Grp}[0]{\category{Grp}}
\newcommand{\pfd}{p.f.d.\@\xspace}
\newcommand{\htp}{\simeq}
\newcommand{\Map}{\textrm{Map}}
\newcommand{\Coh}{\textrm{Coh}}
\newcommand{\ab}{\mathbf{a}}
\newcommand{\bb}{\mathbf{b}}
\newcommand{\cb}{\mathbf{c}}
\newcommand{\db}{\mathbf{d}}
\newcommand{\eb}{\mathbf{e}}
\newcommand{\fb}{\mathbf{f}}
\newcommand{\gb}{\mathbf{g}}
\newcommand{\hb}{\mathbf{h}}
\begin{document}

\title[Universality of the Homotopy Interleaving Distance]{Universality of the Homotopy Interleaving Distance}

\author{Andrew J. Blumberg}
\address{Irving Institute for Cancer Dynamics, Columbia University, USA}
\email{andrew.blumberg@columbia.edu}
\author{Michael Lesnick}
\address{Department of Mathematics and Statistics, University at Albany -- SUNY, USA}
\email{mlesnick@albany.edu}
\subjclass[2010]{55P99 (primary), 55U99 (secondary)} 

\date{}
\begin{abstract}
As a step towards establishing homotopy-theoretic foundations for topological data analysis (TDA), we introduce and study \emph{homotopy interleavings}
between filtered topological spaces.  These are homotopy-invariant analogues of interleavings, objects commonly used in TDA to articulate stability and inference theorems.  Intuitively, whereas a strict interleaving between filtered spaces $X$ and $Y$ certifies that $X$ and $Y$ are approximately isomorphic, a homotopy interleaving between $X$ and $Y$ certifies that $X$ and $Y$ are approximately weakly equivalent. 

The main results of this paper are that homotopy interleavings induce an extended pseudometric $d_{HI}$ on
filtered spaces, and that this is the universal
pseudometric satisfying natural stability and homotopy invariance
axioms.  To motivate these axioms, we also observe that $d_{HI}$ (or more generally, any pseudometric
satisfying these two axioms and an additional ``homology bounding" axiom) can be used to formulate lifts of several 
fundamental TDA theorems from the algebraic
(homological) level to the level of filtered spaces.  

Finally, we consider the problem of establishing a persistent Whitehead theorem in terms of homotopy interleavings.  We provide a counterexample to a naive formulation of the result.
\end{abstract}
\maketitle
\thispagestyle{empty}

\tableofcontents
 

\section{Introduction}
\label{Sec:Intro}
Topological data analysis (TDA) is a branch of statistics whose goal is to apply topology to analyze the global, non-linear, geometric features of data.  At a high level, the basic TDA workflow is easy to describe: Given a data set $P$, e.g., a finite metric space, we construct a diagram of topological spaces $F(P)$ whose topological structure encodes information about the shape of $P$, and then study $F(P)$ using familiar tools from algebraic topology.  The best known example of this workflow is \emph{persistent homology}, which takes $F(P)$ to be a filtered topological space and then applies homology with coefficients in a field to obtain simple, readily computed invariants of $P$ called \emph{barcodes}.  

Diagrams of topological spaces have been studied extensively in algebraic topology.  The essential mathematical difference between the TDA theory and classical topology is that in TDA, we usually work with \emph{metrics} between diagrams of spaces and their invariants.  We are typically more interested in whether a pair of objects are close in some suitably chosen metric (i.e., ``approximately equal") than in whether they are exactly equal.

From this vantage point, the well-known stability theory for persistent homology and its applications to TDA can be seen as a rudimentary form of ``approximate algebraic topology" for filtered spaces, in which a central role is played by such metrics.  In analogy with classical algebraic topology, one imagines that there should exist an ``approximate homotopy theory" for filtered spaces which serves as the foundation for this approximate algebraic topology; indeed, the beginnings of such a theory are already implicit in the proofs of well-known TDA results.  

The aim of this paper is to develop the formal language needed to start fleshing out this approximate homotopy theory.  
The essential first problem is to select a suitable notion of \emph{approximate weak equivalence} of filtered spaces; we focus primarily on this.  
Our results establish that our \emph{homotopy interleavings} and the metric they induce, the \emph{homotopy interleaving distance}, provide a notion of approximate weak equivalence that is very well behaved, relative to the needs of TDA.  
To obtain our results, we study how the metrics commonly used in TDA interact with the homotopy theory of diagrams of topological spaces.

Our first and main application of homotopy interleavings is to formulate theorems about persistent homology directly on the level of filtered spaces, rather than on the level of barcodes.  To explain, typical stability, inference, and approximation results for persistent homology assert that certain pairs of filtered spaces have barcodes which are similar.  In many cases, one can use homotopy interleavings to strengthen such results to ones formulated directly on the level of filtered spaces.  We consider several examples in this paper.  Additional applications of homotopy interleavings in a similar spirit are given in our recent work \cite{blumberg2020stability} on the stability of density-sensitive bifiltrations constructed from metric data.  

As another application, we study the problem of obtaining a persistent Whitehead theorem using homotopy interleavings.  We provide a counterexample to naive formulations of the result, and guided by this, present a weaker statement as a conjecture.  Following the release of the first version of this paper, Lanari and Scoccola \cite{lanari2020rectification} have proven this conjecture under a mild additional assumption.

In the remainder of the introduction, we state our main results and discuss applications of homotopy interleavings in detail. 
  
\subsection{Persistent Homology}\label{Sec:Persistent_Homology}
As mentioned above, persistent homology provides invariants of data
called \emph{barcodes}.  In the last two decades, these invariants have been studied extensively and 
 applied widely to scientific data 
\cite{edelsbrunner2010computational,carlsson2014topological}.  

Formally, a barcode is a multiset of intervals in a totally ordered set $A$; an interval in $A$ is a non-empty subset $I\subset A$ such that $s\in I$ whenever $r\leq s\leq t$ and $r,t\in I$.  In this paper, we'll be interested primarily in the cases $A=\R$ and  $A=[0,\infty)$.  Intuitively, each interval in the barcode
represents a topological feature of our data, and the
length of the interval is a measure of the robustness of that
feature to perturbations of the data.

For categories $\C$ and $\D$ with $\C$ small, let $\D^\C$ denote the category of functors $\C\to\D$ with morphisms
the natural transformations.  Let $\Top$ denote the category of compactly-generated
weakly Hausdorff (CGWH) topological spaces.  It is standard in homotopy theory to restrict attention to CGWH spaces; see Section~\ref{Sec:CGWH}.    Let $\mathbf{Simp}$ denote the category of abstract simplicial complexes.  Via geometric realization, we sometimes regard $\mathbf{Simp}$ as a subcategory of $\Top$.  
 
Regarding
$A$, together with its total order, as a category in the usual
way, we define a \emph{$A$-space} to be an object $\F$ of
$\Top^A$.  If for each $r\leq s\in A$, the internal map $\F_{r,s}\colon \F_r\to \F_s$ is an \emph{inclusion} (i.e., a homeomorphism onto its image), then we call $\F$ a \emph{filtration}.  In all of the examples of filtrations we consider in this paper, we will have $F_{r}\subset F_{s}$, with $F_{r,s}$ the usual subset inclusion.  One standard example in TDA is the following.
\begin{definition}
Given a finite metric space $P$, denote its metric as $d_P$.  We define a filtration $\Rips(P):\R\to \mathbf{Simp}$, the \emph{(Vietoris-)Rips filtration of $P$},  as follows:  For
$r\geq 0$, $\Rips(P)_r$ is the clique complex on the graph with vertex set $P$ and edge set $\{[p, q] \mid d_P(p,q)\leq 2r\}$; for $r<0$, 
$\Rips(P)_r:=\emptyset$.
\end{definition}

Let $\kVect$ denote the category of $k$-vector spaces over some fixed
field $k$.  A \emph{persistence module} is an object $M$ of $\kVect^A$.  We say $M$ is \emph{pointwise finite dimensional (\pfd)} if $\dim M_r<\infty$ for all $r\in A$.  For $I\subset A$ an interval, define the \emph{interval module} $k^I$ to be the persistence module given by 
\begin{align*}
k^I_r&=
\begin{cases}
k &{\textup{if }} r\in I, \\
0 &{\textup{ otherwise},}
\end{cases}
& k^I_{r,s}=
\begin{cases}
\id_k &{\textup{if }} r\leq s\in I,\\
0 &{\textup{ otherwise}.}
\end{cases}
\end{align*}
\begin{theorem}[Structure theorem for persistence modules~\cite{zomorodian2005computing, crawley2012decomposition,botnan2020decomposition}] For any \pfd persistence module $M$, there is a unique barcode $\B{M}$ such that \[M\cong \bigoplus_{I\in \B{M}} k^I.\]  
\end{theorem}

Letting $H_i\colon \Top\to \kVect$ denote the $i^{\mathrm{th}}$ homology
functor with coefficients in $k$, we obtain a barcode $\B{H_i\Rips(P)}$ for any finite metric space $P$ and $i\geq 0$; see \cref{Fig:RipsEx} for an illustration.  This construction of barcodes  in fact extends to compact metric spaces $P$, though in this case $H_i \Rips(P)$ needn't be \pfd, so a slightly different approach is required \cite{chazal2016observable,chazal2014persistence}.  By considering other kinds of filtrations, we can obtain other kinds of barcode invariants of data in the same way. 
\definecolor{aqaqaq}{rgb}{0.6274509803921569,0.6274509803921569,0.6274509803921569}
\begin{figure}
\centering
\begin{center}
\begin{tikzpicture}[scale=.75]
\coordinate (a) at (1,0);
\coordinate (b) at (0,1);
\coordinate (c) at (-1,0);
\coordinate (d) at (0,-1);
\coordinate (e) at (.707,.707);
\coordinate (f) at (-.707,.707);
\coordinate (g) at (-.707,-.707);
\coordinate (h) at (.707,-.707);
\coordinate (i) at  (1.2,.5);
\coordinate (j) at (.5,.3);

\draw [fill] (a) circle [radius=0.07];
\draw [fill] (b) circle [radius=0.07];
\draw [fill] (c) circle [radius=0.07];
\draw [fill] (d) circle [radius=0.07];
\draw [fill] (e) circle [radius=0.07];
\draw [fill] (f) circle [radius=0.07];
\draw [fill] (g) circle [radius=0.07];
\draw [fill] (h) circle [radius=0.07];
\draw [fill] (i) circle [radius=0.07];
\draw [fill] (j) circle [radius=0.07];
\end{tikzpicture}
\hskip20pt
\begin{tikzpicture}[scale=.75]
\coordinate (a) at (1,0);
\coordinate (b) at (0,1);
\coordinate (c) at (-1,0);
\coordinate (d) at (0,-1);
\coordinate (e) at (.707,.707);
\coordinate (f) at (-.707,.707);
\coordinate (g) at (-.707,-.707);
\coordinate (h) at (.707,-.707);
\coordinate (i) at  (1.2,.5);
\coordinate (j) at (.5,.3);

\draw[thick] (a) -- (i);
\draw[thick] (a) -- (j);
\draw[thick] (e) -- (i);
\draw[thick] (e) -- (j);

\draw [fill] (a) circle [radius=0.07];
\draw [fill] (b) circle [radius=0.07];
\draw [fill] (c) circle [radius=0.07];
\draw [fill] (d) circle [radius=0.07];
\draw [fill] (e) circle [radius=0.07];
\draw [fill] (f) circle [radius=0.07];
\draw [fill] (g) circle [radius=0.07];
\draw [fill] (h) circle [radius=0.07];
\draw [fill] (i) circle [radius=0.07];
\draw [fill] (j) circle [radius=0.07];
\end{tikzpicture}
\hskip20pt
\begin{tikzpicture}[scale=.75]
\coordinate (a) at (1,0);
\coordinate (b) at (0,1);
\coordinate (c) at (-1,0);
\coordinate (d) at (0,-1);
\coordinate (e) at (.707,.707);
\coordinate (f) at (-.707,.707);
\coordinate (g) at (-.707,-.707);
\coordinate (h) at (.707,-.707);
\coordinate (i) at  (1.2,.5);
\coordinate (j) at (.5,.3);

\fill[color=aqaqaq, fill=aqaqaq, fill opacity=0.4] (a) -- (e) -- (i) -- cycle;
\fill[color=aqaqaq, fill=aqaqaq, fill opacity=0.4] (a) -- (e) -- (j) -- cycle;
\fill[color=aqaqaq, fill=aqaqaq, fill opacity=0.4] (a) -- (i) -- (j) -- cycle;
\fill[color=aqaqaq, fill=aqaqaq, fill opacity=0.4] (e) -- (i) -- (j) -- cycle;

\draw[thick] (a) -- (i);
\draw[thick] (a) -- (j);
\draw[thick] (e) -- (i);
\draw[thick] (e) -- (j);
\draw[thick] (a) -- (e);
\draw[thick] (i) -- (j);
\draw[thick] (b) -- (e);
\draw[thick] (b) -- (f);
\draw[thick] (c) -- (f);
\draw[thick] (c) -- (g);
\draw[thick] (d) -- (g);
\draw[thick] (d) -- (h);
\draw[thick] (a) -- (h);
\draw[thick] (a) -- (i);
\draw[thick] (a) -- (j);
\draw [fill] (a) circle [radius=0.07];
\draw [fill] (b) circle [radius=0.07];
\draw [fill] (c) circle [radius=0.07];
\draw [fill] (d) circle [radius=0.07];
\draw [fill] (e) circle [radius=0.07];
\draw [fill] (f) circle [radius=0.07];
\draw [fill] (g) circle [radius=0.07];
\draw [fill] (h) circle [radius=0.07];
\draw [fill] (i) circle [radius=0.07];
\draw [fill] (j) circle [radius=0.07];
\end{tikzpicture}
\hskip20pt
\begin{tikzpicture}[scale=.75]
%
\coordinate (a) at (1,0);
\coordinate (b) at (0,1);
\coordinate (c) at (-1,0);
\coordinate (d) at (0,-1);
\coordinate (e) at (.707,.707);
\coordinate (f) at (-.707,.707);
\coordinate (g) at (-.707,-.707);
\coordinate (h) at (.707,-.707);
\coordinate (i) at  (1.2,.5);
\coordinate (j) at (.5,.3);

\fill[color=aqaqaq, fill=aqaqaq, fill opacity=0.4] (a) -- (e) -- (i) -- cycle;
\fill[color=aqaqaq, fill=aqaqaq, fill opacity=0.4] (a) -- (e) -- (j) -- cycle;
\fill[color=aqaqaq, fill=aqaqaq, fill opacity=0.4] (a) -- (i) -- (j) -- cycle;
\fill[color=aqaqaq, fill=aqaqaq, fill opacity=0.4] (e) -- (i) -- (j) -- cycle;

\fill[color=aqaqaq, fill=aqaqaq, fill opacity=0.4] (a) -- (b) -- (e) -- cycle;
\fill[color=aqaqaq, fill=aqaqaq, fill opacity=0.4] (a) -- (b) -- (i) -- cycle;
\fill[color=aqaqaq, fill=aqaqaq, fill opacity=0.4] (a) -- (b) -- (j) -- cycle;
\fill[color=aqaqaq, fill=aqaqaq, fill opacity=0.4] (a) -- (e) -- (h) -- cycle;

\fill[color=aqaqaq, fill=aqaqaq, fill opacity=0.4] (e) -- (b) -- (f) -- cycle;
\fill[color=aqaqaq, fill=aqaqaq, fill opacity=0.4] (b) -- (f) -- (c) -- cycle;
\fill[color=aqaqaq, fill=aqaqaq, fill opacity=0.4] (f) -- (c) -- (g) -- cycle;
\fill[color=aqaqaq, fill=aqaqaq, fill opacity=0.4] (c) -- (g) -- (d) -- cycle;
\fill[color=aqaqaq, fill=aqaqaq, fill opacity=0.4] (g) -- (d) -- (h) -- cycle;
\fill[color=aqaqaq, fill=aqaqaq, fill opacity=0.4] (d) -- (h) -- (a) -- cycle;

\fill[color=aqaqaq, fill=aqaqaq, fill opacity=0.4] (b) -- (e) -- (i) -- cycle;
\fill[color=aqaqaq, fill=aqaqaq, fill opacity=0.4] (b) -- (e) -- (j) -- cycle;
\fill[color=aqaqaq, fill=aqaqaq, fill opacity=0.4] (b) -- (j) -- (i) -- cycle;

\fill[color=aqaqaq, fill=aqaqaq, fill opacity=0.4] (h) -- (e) -- (i) -- cycle;
\fill[color=aqaqaq, fill=aqaqaq, fill opacity=0.4] (h) -- (e) -- (j) -- cycle;
\fill[color=aqaqaq, fill=aqaqaq, fill opacity=0.4] (h) -- (i) -- (j) -- cycle;

\fill[color=aqaqaq, fill=aqaqaq, fill opacity=0.4] (e)-- (f) -- (j) -- cycle;
\draw[thick] (a) -- (i);
\draw[thick] (a) -- (j);
\draw[thick] (e) -- (i);
\draw[thick] (e) -- (j);
\draw[thick] (a) -- (e);
\draw[thick] (i) -- (j);
\draw[thick] (b) -- (e);
\draw[thick] (b) -- (f);
\draw[thick] (c) -- (f);
\draw[thick] (c) -- (g);
\draw[thick] (d) -- (g);
\draw[thick] (d) -- (h);
\draw[thick] (a) -- (h);
\draw[thick] (a) -- (i);
\draw[thick] (a) -- (j);
\draw[thick] (a) -- (b);
\draw[thick] (b) -- (c);
\draw[thick] (c) -- (d);
\draw[thick] (d) -- (a);
\draw[thick] (e) -- (f);
\draw[thick] (f) -- (g);
\draw[thick] (g) -- (h);
\draw[thick] (h) -- (a);
\draw[thick] (e) -- (h);
\draw[thick] (h) -- (i);
\draw[thick] (h) -- (j);
\draw[thick] (i) -- (b);
\draw[thick] (j) -- (b);
\draw[thick] (f) -- (j);

\draw [fill] (a) circle [radius=0.07];
\draw [fill] (b) circle [radius=0.07];
\draw [fill] (c) circle [radius=0.07];
\draw [fill] (d) circle [radius=0.07];
\draw [fill] (e) circle [radius=0.07];
\draw [fill] (f) circle [radius=0.07];
\draw [fill] (g) circle [radius=0.07];
\draw [fill] (h) circle [radius=0.07];
\draw [fill] (i) circle [radius=0.07];
\draw [fill] (j) circle [radius=0.07];
\end{tikzpicture}
\hskip20pt
\begin{tikzpicture}[scale=.75]
%
\coordinate (a) at (1,0);
\coordinate (b) at (0,1);
\coordinate (c) at (-1,0);
\coordinate (d) at (0,-1);
\coordinate (e) at (.707,.707);
\coordinate (f) at (-.707,.707);
\coordinate (g) at (-.707,-.707);
\coordinate (h) at (.707,-.707);
\coordinate (i) at  (1.2,.5);
\coordinate (j) at (.5,.3);

\fill[color=aqaqaq, fill=aqaqaq, fill opacity=1] (i) -- (e) -- (b) -- (f) -- (c) -- (g) -- (d) -- (h) -- cycle;
\draw[thick] (a) -- (b);
\draw[thick] (a) -- (c);
\draw[thick] (a) -- (d);
\draw[thick] (a) -- (e);
\draw[thick] (a) -- (f);
\draw[thick] (a) -- (g);
\draw[thick] (a) -- (h);
\draw[thick] (a) -- (i);
\draw[thick] (a) -- (j);

\draw[thick] (b) -- (c);
\draw[thick] (b) -- (d);
\draw[thick] (b) -- (e);
\draw[thick] (b) -- (f);
\draw[thick] (b) -- (g);
\draw[thick] (b) -- (h);
\draw[thick] (b) -- (i);
\draw[thick] (b) -- (j);

\draw[thick] (c) -- (d);
\draw[thick] (c) -- (e);
\draw[thick] (c) -- (f);
\draw[thick] (c) -- (g);
\draw[thick] (c) -- (h);
\draw[thick] (c) -- (i);
\draw[thick] (c) -- (j);

\draw[thick] (d) -- (e);
\draw[thick] (d) -- (f);
\draw[thick] (d) -- (g);
\draw[thick] (d) -- (h);
\draw[thick] (d) -- (i);
\draw[thick] (d) -- (j);

\draw[thick] (e) -- (f);
\draw[thick] (e) -- (g);
\draw[thick] (e) -- (h);
\draw[thick] (e) -- (i);
\draw[thick] (e) -- (j);

\draw[thick] (f) -- (g);
\draw[thick] (f) -- (h);
\draw[thick] (f) -- (i);
\draw[thick] (f) -- (j);

\draw[thick] (g) -- (h);
\draw[thick] (g) -- (i);
\draw[thick] (g) -- (j);

\draw[thick] (h) -- (i);
\draw[thick] (h) -- (j);

\draw[thick] (i) -- (j);

\draw [fill] (a) circle [radius=0.07];
\draw [fill] (b) circle [radius=0.07];
\draw [fill] (c) circle [radius=0.07];
\draw [fill] (d) circle [radius=0.07];
\draw [fill] (e) circle [radius=0.07];
\draw [fill] (f) circle [radius=0.07];
\draw [fill] (g) circle [radius=0.07];
\draw [fill] (h) circle [radius=0.07];
\draw [fill] (i) circle [radius=0.07];
\draw [fill] (j) circle [radius=0.07];
\end{tikzpicture}
\end{center}
\ \\ \ \\
\includegraphics[scale=0.40]{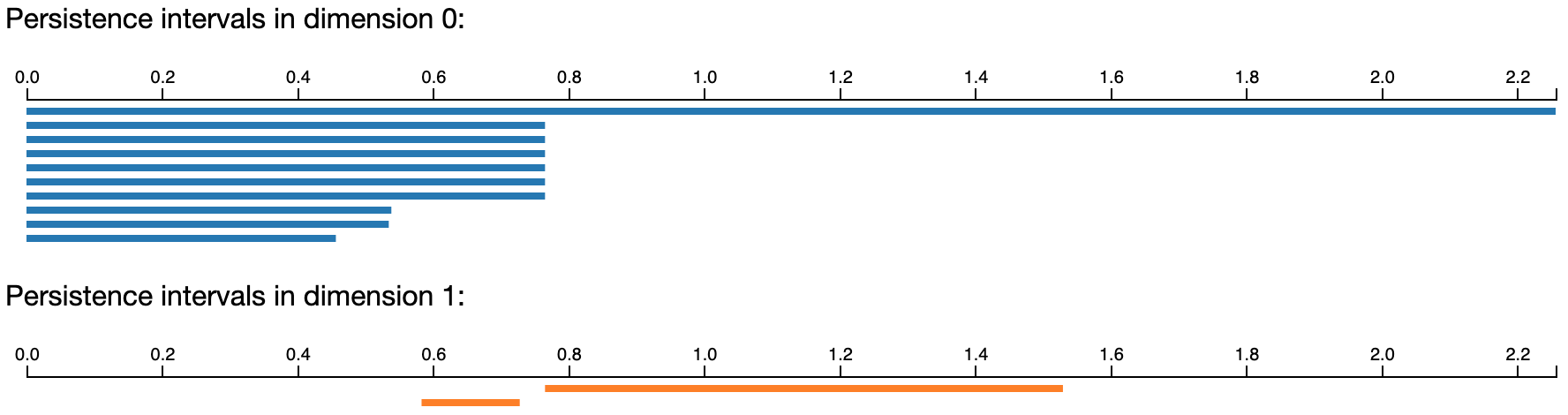} 
\caption{Rips complexes $\Rips(P)_r$ of a point cloud $P\subset \R^2$
for several choices of $r$, together with plots of the barcodes 
${\protect\B{H_0 \protect\Rips(P)}}$ and ${\protect\B{H_1 \protect\Rips(P)}}$.}
\label{Fig:RipsEx}.  
\end{figure}

\bparagraph{Bottleneck Distance}
As noted earlier, metrics on collections of topological invariants play a key role in TDA.  One standard choice of metric on barcodes is the \emph{bottleneck distance} $d_B$.  Roughly, for two barcodes $\mathcal C$ and $\mathcal D$, $d_B(\mathcal C,\mathcal D)$
is the maximum amount we need to perturb any interval in $\mathcal C$ to transform $\mathcal C$ into $\mathcal D$.  We now give the precise definition, restricting attention to barcodes consisting of intervals in $\R$.  

A \emph{matching} between sets $S$ and $T$ is a bijection between subsets of $S$ and $T$; this definition extends in the expected way to multisets.  

\begin{definition}[Bottleneck distance]\label{Def:Bottleneck}~
For $I\subseteq \R$ a nonempty interval and $\delta\geq 0$, define the interval
\[
\Ex(I,\delta):=\{r\in \R \mid \exists\, s\in I\,\textup{with} \,\,
|r-s|\leq \delta\}.
\]  	
For $\mathcal D$ a barcode and $\delta\geq 0$, let $\mathcal D^{\delta}$ denote the
collection of intervals in $\mathcal D$ that contain an interval of the form
$[r,r+\delta]$ for some $r\in \R$. 
A matching $\sigma$ between barcodes $\mathcal C$ and
$\mathcal D$ is called a \emph{$\delta$-matching} if 
\begin{enumerate}
\item $\sigma$ matches each interval in $\mathcal C^{2\delta}$ and $\mathcal D^{2\delta}$,
\item whenever $\sigma(I)=J$, we have $I\subseteq \Ex(J,\delta)$ and $J\subseteq \Ex(I,\delta)$.
\end{enumerate}
The \emph{bottleneck distance} $d_B$ is given by 
\[
d_B(\mathcal C,\mathcal D):=\inf\, \{\delta \mid
  \exists\textup{ a }\delta\textup{-matching between }\mathcal
  C\textup{ and }\mathcal D\}.
\]

\end{definition}

Recall that an extended
pseudometric on $S$ is a symmetric function
\[d\colon S\times S\to [0,\infty]\] 
satisfying the triangle inequality and $d(x,x)=0$ for all $x\in S$.
The bottleneck distance is an extended pseudometric
on barcodes, restricting to a genuine metric on the Rips barcodes of finite metric spaces.  
 In this paper, by a
\emph{distance} we will always mean an extended pseudometric.

\bparagraph{Stability of Persistent Homology of Metric Spaces}
The following fundamental stability result quantifies how persistent homology changes when the input data is perturbed.  Here, $d_{GH}$ denotes the Gromov-Hausdorff distance; see \cref{Def:GH_Dist}.

\begin{theorem}[Rips Stability \cite{chazal2009gromov,chazal2014persistence}]\label{Thm:RST}
For all finite metric spaces $P$ and $Q$,
\[d_B(\B{H_i\Rips(P)},\B{H_i\Rips(Q)})\leq d_{GH}(P,Q).\]
\end{theorem}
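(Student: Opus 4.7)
The plan is to factor the stated bound through the \emph{strict interleaving distance} $d_I$ on persistence modules: two persistence modules $M,N$ are $\delta$-interleaved when there exist natural families $f_r\colon M_r\to N_{r+\delta}$ and $g_r\colon N_r\to M_{r+\delta}$ whose two-fold composites coincide with the internal transition maps to parameter $r+2\delta$. The theorem would then follow from the chain
\[
d_B(\B{H_i\Rips(P)},\B{H_i\Rips(Q)})\;\le\; d_I(H_i\Rips(P),H_i\Rips(Q))\;\le\; 2\,d_{GH}(P,Q).
\]

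The first inequality is the \emph{algebraic stability theorem} for \pfd persistence modules, which I would invoke as a black box: decomposing each module into interval summands via the classification theorem cited in \cref{Sec:Persistent_Homology}, one extracts from a $\delta$-interleaving a partial matching of bars whose unmatched bars have length at most $2\delta$ and whose matched bars have endpoints differing by at most $\delta$.

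The substantive step is the second inequality. For any $\varepsilon>0$ I would choose a correspondence $R\subseteq P\times Q$ with metric distortion
\[
\mathrm{dis}(R):=\sup_{(p,q),(p',q')\in R}\bigl|d_P(p,p')-d_Q(q,q')\bigr|\;\le\; 2\,d_{GH}(P,Q)+\varepsilon,
\]
set $\delta:=\tfrac{1}{2}\mathrm{dis}(R)$, and pick functions $\varphi\colon P\to Q$ and $\psi\colon Q\to P$ whose graphs lie in $R$. If $d_P(p,p')\le 2r$ then $d_Q(\varphi(p),\varphi(p'))\le d_P(p,p')+\mathrm{dis}(R)\le 2(r+\delta)$, so $\varphi$ extends simplicially to $\Rips(P)_r\to\Rips(Q)_{r+\delta}$; analogously for $\psi$. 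The composite $\psi\varphi\colon\Rips(P)_r\to\Rips(P)_{r+2\delta}$ is not the internal inclusion, but a direct distance estimate shows that $\sigma\cup\psi\varphi(\sigma)$ has diameter at most $2(r+2\delta)$ for every simplex $\sigma$ of $\Rips(P)_r$, so the two maps are \emph{contiguous}. Contiguous simplicial maps agree on homology, so applying $H_i$ turns the interleaving relations into strict equalities, producing a $\delta$-interleaving of the persistence modules $H_i\Rips(P)$ and $H_i\Rips(Q)$. Sending $\varepsilon\to 0$ gives the bound.

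The main obstacle is precisely the contiguity step. A naive attempt to produce a strict interleaving of the $\R$-spaces themselves fails: the compositions $\psi\varphi$ and $\varphi\psi$ really are different from the transition maps as morphisms in $\Top^\R$, and only after passing to homology do they become equal. This observation pinpoints why the natural interleaving relation at the filtered-space level must be homotopy-invariant rather than strict, and serves as a primary motivation for the \emph{homotopy interleavings} pursued in the remainder of the paper.
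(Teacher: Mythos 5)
Your proof is correct, and it is essentially the approach of Chazal et al.~\cite{chazal2014persistence}, which the paper cites but does not follow. The paper does not prove the Rips stability theorem directly at all: it takes it as known and instead proves the filtration-level refinement \cref{Prop:Two_Axioms}, following M\'emoli's argument~\cite{memoli2017distance}. The key difference is where the ``approximation'' is absorbed. You pick choice functions $\varphi\colon P\to Q$ and $\psi\colon Q\to P$ from the correspondence and show the round trips $\psi\varphi$ and $\varphi\psi$ are \emph{contiguous} to the filtration inclusions; contiguity only becomes equality after applying $H_i$, which is why your argument delivers a $\delta$-interleaving of persistence modules rather than of filtrations. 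The paper avoids choice functions altogether: it builds auxiliary filtrations $\F^P,\F^Q$ on the full simplex $[C]$ spanned by the \emph{whole} correspondence, notes that these are sublevelset filtrations of functions $\gamma^P,\gamma^Q$ with $d_\infty(\gamma^P,\gamma^Q)\le\delta$ (so any stable distance $d$ has $d(\F^P,\F^Q)\le\delta$), and then invokes Quillen's Theorem~A for simplicial complexes to show that the coordinate projections induce objectwise homotopy equivalences $\F^P\to\Rips(P)$ and $\F^Q\to\Rips(Q)$ (so $d(\Rips(P),\F^P)=d(\Rips(Q),\F^Q)=0$ by homotopy invariance). No homology is applied; everything happens at the level of $\R$-spaces. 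That is exactly what the paper needs, since it wants the bound $d(\Rips(P),\Rips(Q))\le 2\,d_{GH}(P,Q)$ to hold for \emph{any} stable, homotopy-invariant distance $d$, with the usual bottleneck bound then following via the homology-bounding axiom. Your closing observation correctly diagnoses the limitation of the contiguity route: it pinpoints precisely why that argument cannot be lifted above the homological level, and why the paper reaches for a different proof to obtain a filtration-level statement.
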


Using the appropriate definition of a barcode, this result generalizes to compact metric spaces \cite{chazal2014persistence}.  As we discuss in \cref{Sec:Interleavings}, one can give a yet more general, purely algebraic formulation of the stability of persistent homology, called the \emph{algebraic stability theorem}.

\subsection{Filtration-Level Refinement of the Rips Stability Theorem}\label{Sec:Intro_Homotopy_Interleavings}
A natural question is whether \cref{Thm:RST} can be regarded as
the consequence of some purely topological (homotopy-theoretic) result
about the filtrations $\Rips(P)$ and $\Rips(Q)$.  To obtain such a result, it suffices to identify a distance $d$ on $\R$-spaces satisfying the following two conditions:
\mbox{}
\begin{enumerate}[(i)]
\item For all 
metric spaces $P$ and
$Q$, \begin{equation}\label{Eq:First_Ineq} d(\Rips(P),\Rips(Q))\leq d_{GH}(P,Q). \end{equation}
\item {} [\emph{Homology bounding axiom}] For all $i\geq 0$ and $\R$-spaces $X,Y$ with $H_iX$ and $H_iY$ pointwise finite dimensional, 
\begin{equation*}
d_B(\B{H_iX}, \B{H_iY})\leq d(X,Y). \end{equation*}
\end{enumerate}
Clearly, the existence of such a distance $d$ implies~\cref{Thm:RST}.

\bparagraph{Axioms for a Distance on $\R$-Spaces}
Rather than work with directly with the inequality \eqref{Eq:First_Ineq} as we develop our theory, it turns out to be more more natural to introduce stability and homotopy invariance axioms for a distance $d$ on $\R$-spaces.  Together these axioms imply \eqref{Eq:First_Ineq}, but are more broadly applicable.

\begin{definition}
Given a topological space $T$ and a (not necessarily continuous) function $\gamma\colon T\to \R$, we define the \emph{sublevelset filtration}
$\Sb(\gamma)\colon \R\to \Top$ by taking \[\Sb(\gamma)_r=\gamma^{-1}(-\infty,r],\]
where each $\Sb_r$ is given the subspace
topology.
\end{definition}
Note that in the CGWH context, the subspace topology is understood to be that of~\cite[Definition 2.25]{strickland2009category}; this coincides with the standard subspace topology on subsets that are already CGWH in the standard topology.

For functions $\gamma,\kappa:T\to \R$, we let
\[
d_\infty(\gamma,\kappa)=\sup_{x\in T} |\gamma(x)-\kappa(x)|
\]

\begin{definition}\label{Def:Weakly_Equivalent}
For $\I$ any small category and functors $X,Y\colon \I\to \Top$, we say a natural transformation $f\colon X\to Y$ is an \emph{(objectwise) weak equivalence} if $f_a\colon X_a\to Y_a$ is a weak homotopy equivalence for all $a\in \ob \I$.  We let
\[
\begin{tikzcd}[ampersand replacement=\&,column sep=4ex]
X\ar["\simeq"]{r} \& Y
\end{tikzcd}
\]
denote an objectwise weak equivalence from $X$ to $Y$.  We say $X$ and $Y$ are \emph{weakly equivalent}, and write $X\htp Y$, if there exists a zigzag of objectwise weak equivalences
\[
\begin{tikzcd}[ampersand replacement=\&,column sep=2ex,row sep=2ex]
   \& W_1\ar["\simeq",swap]{dl}\ar["\simeq"]{dr}  \&           \& \cdots\ar["\simeq",swap]{dl}\ar["\simeq"]{dr}  \&                \&   W_n \ar["\simeq",swap]{dl}\ar["\simeq"]{dr}  \\
X \&                                                             \&  W_2 \&                                                                                                                       \& W_{n-1} \&                                                               \&Y.
\end{tikzcd}
\]
for some $n$.
\end{definition}
This is clearly an equivalence relation on objects, but it is
unwieldy.  As we explain in \cref{Sec:Model_Cats}, $X\simeq Y$ if and only if there exists a zig-zag of the following form:
\[
\begin{tikzcd}[ampersand replacement=\&,column sep=2ex,row sep=2ex]
\& W\ar["\simeq",swap]{dl}\ar["\simeq"]{dr}\\
X \& \& Y.
\end{tikzcd}
\]

\begin{definition}[Stability and homotopy invariance axioms]\label{Def:Two_Axioms}
We a say distance $d$ on $\R$-spaces is 
\begin{enumerate}
\item \emph{stable} if for any $T\in \ob \Top$ and functions $\gamma,\kappa\colon T\to \R$, 
\[d(\Sb(\gamma),\Sb(\kappa))\leq d_\infty(\gamma,\kappa),\]
\item \emph{homotopy invariant} if $d(X,Y)=0$ whenever $X\simeq Y$.
\end{enumerate}
\end{definition}
The following result is implicit in the original proof of the Rips stability theorem:
\begin{proposition}\label{Prop:Two_Axioms}
Any stable and homotopy invariant distance on $\R$-spaces satisfies \eqref{Eq:First_Ineq}, i.e., strengthens the Rips stability theorem to a filtration-level result.
\end{proposition}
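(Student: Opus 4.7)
The plan is to realize both $\Rips(P)$ and $\Rips(Q)$, up to weak equivalence, as sublevelset filtrations of two real-valued functions defined on a common ambient topological space, so that the stability axiom applies directly; homotopy invariance then transfers the resulting bound back to the Rips filtrations themselves.

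First, I would exploit the correspondence characterization of Gromov-Hausdorff distance: for any $\epsilon>0$ there exists a correspondence $C\subseteq P\times Q$ (a relation with both projections surjective) whose distortion $\sup_{(p,q),(p',q')\in C}|d_P(p,p')-d_Q(q,q')|$ is at most $2d_{GH}(P,Q)+\epsilon$. Take $T$ to be the geometric realization of the full simplex on vertex set $C$, and define functions $\gamma_P,\gamma_Q\colon T\to\R$ by setting $\gamma_P(x)$ (resp.\ $\gamma_Q(x)$) equal to half the maximum $d_P$-distance (resp.\ $d_Q$-distance) between first (resp.\ second) coordinates of vertices of $C$ appearing in the support of $x$. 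The distortion bound immediately gives $d_\infty(\gamma_P,\gamma_Q)\le d_{GH}(P,Q)+\epsilon/2$, so stability yields $d(\Sb(\gamma_P),\Sb(\gamma_Q))\le d_{GH}(P,Q)+\epsilon/2$.

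The main technical step is to show $\Sb(\gamma_P)\simeq\Rips(P)$ as $\R$-spaces, and analogously for $Q$. By construction, $\Sb(\gamma_P)_r$ is the subcomplex of $T$ on those $\sigma\subseteq C$ whose first projection $\pi_P(\sigma)$ is a simplex of $\Rips(P)_r$, and I would argue that first projection defines an objectwise weak equivalence $\pi_P\colon\Sb(\gamma_P)\to\Rips(P)$ of $\R$-spaces. Pick any set-theoretic section $s\colon P\to C$ of $\pi_P\colon C\to P$; extended simplicially, $s$ defines a natural transformation $\Rips(P)\to\Sb(\gamma_P)$ splitting $\pi_P$, and the composite $s\circ\pi_P$ is simplicially homotopic to the identity on each $\Sb(\gamma_P)_r$ via a straight-line homotopy that stays inside $\Sb(\gamma_P)_r$: for any simplex $\tau\subseteq C$ with $\pi_P(\tau)\in\Rips(P)_r$, the larger simplex $\tau\cup s(\pi_P(\tau))\subseteq C$ still projects to $\pi_P(\tau)$ and so is contained in $\Sb(\gamma_P)_r$.

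Granting the weak equivalences $\Sb(\gamma_P)\simeq\Rips(P)$ and $\Sb(\gamma_Q)\simeq\Rips(Q)$, two applications of homotopy invariance (using that $d$ is symmetric to swap the second argument into first position) give $d(\Rips(P),\Rips(Q))=d(\Sb(\gamma_P),\Sb(\gamma_Q))\le d_{GH}(P,Q)+\epsilon/2$, and letting $\epsilon\to 0$ proves \eqref{Eq:First_Ineq}. The step I expect to be the main obstacle is verifying these weak equivalences: intuitively, one is ``inflating'' each vertex $p\in P$ into the contractible simplex on its fiber $\pi_P^{-1}(p)$, which should not change homotopy type; the section-plus-straight-line-homotopy argument above is the cleanest route I see, though a nerve-lemma argument applied to the cover of $\Sb(\gamma_P)_r$ by the contractible simplices indexed by simplices $\tau\in\Rips(P)_r$ would suffice equally well.
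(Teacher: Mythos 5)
Your proposal is correct, and it follows the same overall strategy as the paper's proof: use a correspondence $C$ between $P$ and $Q$ to build two real-valued functions $\gamma_P,\gamma_Q$ on the simplex $|C|$ whose sublevelset filtrations are objectwise homotopy equivalent to $\Rips(P)$ and $\Rips(Q)$ respectively, then combine stability and homotopy invariance. (This diagram is due to M\'emoli, as the paper notes.) Where you diverge is in the key technical step, the verification that the first projection $\pi_P\colon \Sb(\gamma_P)\to\Rips(P)$ is an objectwise homotopy equivalence. The paper invokes Quillen's Theorem A for simplicial complexes, checking that the preimage of each closed simplex of $\Rips(P)_r$ is a full simplex of $|C|$ and hence contractible. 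You instead choose a set-theoretic section $s\colon P\to C$, note that $s$ extends to a natural simplicial splitting of $\pi_P$ (because $\gamma_P$ only sees first coordinates), observe that $\pi_P\circ s=\mathrm{id}$ strictly, and produce an explicit straight-line homotopy $s\circ\pi_P\simeq\mathrm{id}$ inside each $\Sb(\gamma_P)_r$ by the carrier observation that $\tau\cup s(\pi_P\tau)$ remains a simplex of $\Sb(\gamma_P)_r$ whenever $\tau$ is. This is a sound argument and is in fact more elementary than the paper's, replacing a nontrivial theorem with a bare-hands convexity/carrier argument; the paper's Quillen's Theorem A route (or the nerve-lemma route you mention as an alternative) generalizes more readily to situations where no global section exists, but for this statement they are interchangeable. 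One incidental remark: your careful bookkeeping with the distortion actually produces the sharper bound $d(\Rips(P),\Rips(Q))\leq d_{GH}(P,Q)$ under the paper's convention that $\Rips(P)_r$ uses edges of length $\leq 2r$, which is stronger than \eqref{Eq:First_Ineq} and thus of course still proves the proposition.
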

We give a proof of \cref{Prop:Two_Axioms} in \cref{Rips_Stability_Refinement}, following a proof of the Rips stability theorem due to M\'emoli \cite{memoli2017distance}.  

\subsection{Properties of the Homotopy Interleaving Distance}\label{Sec:Properties}
The \emph{interleaving distance} $d_I$, the standard pseudometric on $\R$-spaces in the TDA literature, is stable and homology bounding, in the above senses, but is not homotopy invariant, and does not satisfy \eqref{Eq:First_Ineq}; see \cref{Rem:Not_Homotopy_Invariant}.  In \cref{Sec:HI}, we define \emph{homotopy interleavings} and the \emph{homotopy interleaving distance} $d_{HI}$ on $\R$-spaces by modifying the definition of $d_I$ to enforce the homotopy invariance axiom.
Our first main result is the following:

\begin{theorem}\label{Thm:Properties_of_dHI}
$d_{HI}$ is a distance on $\R$-spaces satisfying the stability, homotopy invariance, and homology bounding axioms.  
\end{theorem}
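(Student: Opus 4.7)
The plan is to work from the anticipated definition: $X$ and $Y$ are \emph{$\delta$-homotopy interleaved} exactly when there exist $\R$-spaces $X'\simeq X$ and $Y'\simeq Y$ admitting a (strict) $\delta$-interleaving, and $d_{HI}(X,Y)$ is the infimum of such $\delta$. With this formulation the required properties split cleanly into an easy set of verifications and one genuinely substantive step.

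First I would dispatch the easy axioms. Symmetry of $d_{HI}$ and the vanishing $d_{HI}(X,X)=0$ follow immediately from symmetry of the strict interleaving relation and by taking $X'=Y'=X$. Stability follows without effort: setting $X'=\Sb(\gamma)$ and $Y'=\Sb(\kappa)$ in the definition and using the known stability of $d_I$ gives
\[
d_{HI}(\Sb(\gamma),\Sb(\kappa))\;\le\; d_I(\Sb(\gamma),\Sb(\kappa))\;\le\; d_\infty(\gamma,\kappa).
\]
Homotopy invariance is essentially built into the definition, since if $X'\simeq X$ then the infima defining $d_{HI}(X,Y)$ and $d_{HI}(X',Y)$ range over the same pairs of $\simeq$-equivalence classes, using that $\simeq$ is an equivalence relation. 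For the homology bounding axiom, I would apply $H_i$ to a near-optimal strict $\delta$-interleaving $(X',Y')$ witnessing $d_{HI}(X,Y)<\delta$; since an objectwise weak equivalence of $\R$-spaces induces a natural isomorphism of persistence modules in the parameter $r\in\R$, one has $H_iX\cong H_iX'$ and $H_iY\cong H_iY'$, and $H_i$ sends a strict $\delta$-interleaving to a $\delta$-interleaving of persistence modules. The algebraic stability theorem then yields $d_B(\barc{H_iX},\barc{H_iY})\le \delta$, and taking the infimum over $\delta$ gives the required bound.

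The main work is the triangle inequality, which I would reduce to the following \emph{replacement lemma}: if $A$ and $B$ are $\delta$-interleaved and $A'\simeq A$, then there exists $B'\simeq B$ such that $A'$ and $B'$ are $\delta$-interleaved. Granting this, suppose $d_{HI}(X,Y)<\alpha$ and $d_{HI}(Y,Z)<\beta$ and pick strict interleavings witnessing these: $(\tilde X,\tilde Y)$ and $(Y',Z')$ with $\tilde X\simeq X$, $\tilde Y\simeq Y\simeq Y'$, and $Z'\simeq Z$. Applying the replacement lemma to bridge the first interleaving across the weak equivalence $\tilde Y\simeq Y'$, one obtains some $X''\simeq X$ admitting a strict $\alpha$-interleaving with $Y'$. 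Composing the two interleaving data on the common object $Y'$ yields a strict $(\alpha+\beta)$-interleaving between $X''\simeq X$ and $Z'\simeq Z$, giving $d_{HI}(X,Z)\le \alpha+\beta$.

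The hard part will be the replacement lemma. I expect the argument to take place in a model structure on $\Top^{\R}$ (the projective structure with objectwise weak equivalences and objectwise Serre fibrations is the natural candidate), where the zigzag $A\xleftarrow{\simeq} W \xrightarrow{\simeq} A'$ expressing $A'\simeq A$ can be rectified via cofibrant/fibrant replacement so that one may lift the structure maps $A\to B(\delta)$ and $B\to A(\delta)$ along the weak equivalences and then descend them to $A'$, producing the required $B'\simeq B$ together with strict interleaving morphisms. The delicate point is that the two compatibility triangles characterizing a $\delta$-interleaving must be realized on the nose after replacement, not merely up to homotopy, and ensuring this is presumably where CGWH-based (co)fibrant replacement and the lifting properties of the chosen model structure are used in a nontrivial way.
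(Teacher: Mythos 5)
Your treatment of everything except the triangle inequality is correct and matches the paper: symmetry, vanishing on the diagonal, stability (via $d_{HI}\le d_I$ on representatives), and homotopy invariance are all immediate from the definition, and homology bounding follows because a weak equivalence induces a $0$-interleaving on $H_i$, a strict $\delta$-interleaving induces a $\delta$-interleaving on $H_i$, and the algebraic stability theorem translates this into a bottleneck bound. The paper explicitly remarks that all of this is trivial once the triangle inequality is in hand, and defers the triangle inequality to a separate section.

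The substantive issue is your reduction to the ``replacement lemma'' and the sketch of its proof. Two problems. First, your chaining argument genuinely requires the \emph{strong} form of the lemma, in which the given object $A'$ is kept fixed: you need an $X''\simeq X$ that is strictly $\alpha$-interleaved with the \emph{same} $Y'$ that appears in the second interleaving, so that the two interleavings share a literal common object and can be composed. A relaxed form, in which both $A'$ and $B$ may be replaced by weakly equivalent objects, is of no use here, since applying it leaves you with interleavings across two different objects weakly equivalent to $Y'$ and you enter an infinite regress. But the strong form is far from obvious: any construction that manufactures the new interleaving will want to run cofibrant replacement or factorization on the data containing $A'$, which replaces $A'$ by something merely weakly equivalent to it. Second, the proposed mechanism --- ``lift the structure maps $A\to B(\delta)$ and $B\to A(\delta)$ along the weak equivalences and descend to $A'$'' --- does not address the directionality problem. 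The map $B\to A(\delta)$ points \emph{into} $A$, and the zig-zag $A\leftarrow W\to A'$ also points into $A$; there is no map out of $A$ or $A'$ to lift or descend along, so no amount of fibrant/cofibrant replacement gives you a map $B\to A'(\delta)$ directly. What is actually required is to \emph{change $B$} via a pushout along an acyclic cofibration (so that, after shifting, the new $B'$ receives a map from $A'$ and maps to $A'(\delta)$), and then to verify that the resulting square of $\R$-spaces is a genuine strict interleaving. Your sketch does not identify this pushout, and that is the entire content of the hard step.

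The paper avoids both difficulties by not passing through a replacement lemma at all. It encodes the full zig-zag
\[
W \leftarrow \bullet \to \bullet \leftrightarrow_{\delta} \bullet \leftarrow \bullet \to \bullet \leftrightarrow_{\epsilon} \bullet \leftarrow \bullet \to Y
\]
as a single diagram over a ``marked interleaving category'' $\Int{\A}$, takes one cofibrant replacement of the whole thing at once, and then performs a homotopy left Kan extension along a full inclusion $\Int{\A}\hookrightarrow\Int{\Bb}$ that adjoins the missing objects and arrows. The pointwise formula for the Kan extension exhibits the new objects as pushouts, and since the diagram is cofibrant and the relevant legs are acyclic cofibrations, the cobase-change axiom shows the new maps $L_{\ab,\bb}$ and $L_{\cb,\db}$ are objectwise acyclic cofibrations. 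Because both sides are extended simultaneously inside one diagram, the question of ``fixing'' one endpoint across two separate applications never arises. If you want to pursue your modular decomposition, you would need to either (a) prove the strong replacement lemma, most plausibly by a careful two-stage version of exactly this Kan-extension-plus-pushout argument that is engineered to leave $A'$ untouched (this seems delicate and I am not certain it is true), or (b) reformulate the lemma so that it replaces the common object $Y'$ but records an explicit acyclic cofibration $Y'\to Y''$ and then transports the \emph{second} interleaving along that map as well, which in effect reconstructs the paper's one-shot argument. Either way, the crux is the pushout-along-acyclic-cofibration step, and your write-up currently gives no construction for it.
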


\cref{Prop:Two_Axioms} and \cref{Thm:Properties_of_dHI} together then tell us in particular that $d_{HI}$ satisfies \eqref{Eq:First_Ineq}.  Whereas it is trivial to show that $d_I$ satisfies the triangle inequality, our proof of the triangle inequality for $d_{HI}$ involves some work.  Our argument amounts to showing that some of the internal maps in a certain homotopy left Kan extension are weak homotopy equivalences.  Given the triangle inequality for $d_{HI}$ and the algebraic stability theorem, the rest of the proof of \cref{Thm:Properties_of_dHI} is trivial.

There are several pseudometrics on $\R$-spaces, besides $d_{HI}$, that satisfy the stability, homotopy invariance, and homology bounding axioms, and comparing these different choices can be difficult; see \cref{Sec:Related_Work} and \cref{Sec:Incoherent_Interleavings_And_Rectification}.  This raises the question of whether one can make a canonical choice of such a distance.  
The second main result of this paper, a simple axiomatic characterization of $d_{HI}$, provides an affirmative answer to this question: 
\begin{theorem}[Universality]\label{Thm:Universality}
If $d$ is any stable and homotopy invariant distance on $\R$-spaces, then $d\leq d_{HI}$.
\end{theorem}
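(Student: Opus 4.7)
The plan is to use homotopy invariance to reduce to strict interleavings, and then to realize any strict $\delta$-interleaving via two sublevelset filtrations of real-valued functions on a common space differing by at most $\delta$ in sup norm; the stability axiom then immediately yields the bound.

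In detail, suppose $d_{HI}(X,Y) < \delta$. By the definition of $d_{HI}$, there exist $\R$-spaces $X', Y'$ with $X'\htp X$, $Y'\htp Y$, together with a strict $\delta$-interleaving between $X'$ and $Y'$. Applying homotopy invariance in each argument gives $d(X,Y) = d(X', Y')$, so it suffices to prove that strictly $\delta$-interleaved $\R$-spaces $X', Y'$ satisfy $d(X',Y') \leq \delta$. I plan to construct $T \in \ob\Top$ together with functions $\gamma, \kappa \colon T \to \R$ such that $d_\infty(\gamma,\kappa) \leq \delta$, $\Sb(\gamma)\htp X'$, and $\Sb(\kappa)\htp Y'$. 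Given this, the stability axiom yields $d(\Sb(\gamma),\Sb(\kappa)) \leq \delta$, and two further applications of homotopy invariance give $d(X',Y') \leq \delta$. Taking the infimum over admissible $\delta$ completes the argument.

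To construct $T,\gamma,\kappa$, the natural approach is a double mapping telescope. First, replace $X'$ and $Y'$ by weakly equivalent honest filtrations $\tilde X,\tilde Y$ (whose internal maps are inclusions) and transfer the $\delta$-interleaving to these, either via cofibrant replacement in a suitable model structure on $\Top^\R$ or by an explicit Reedy-style construction. Set $T_{\tilde X} = \bigcup_r \tilde X_r$, equipped with the natural filtration function $\gamma_{\tilde X}(x) = \inf\{r : x \in \tilde X_r\}$, and analogously define $T_{\tilde Y}$ and $\gamma_{\tilde Y}$. Glue $T_{\tilde X}$ and $T_{\tilde Y}$ together using mapping cylinders of the interleaving maps $f \colon \tilde X \to \tilde Y(\delta)$ and $g \colon \tilde Y \to \tilde X(\delta)$ to produce $T$. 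Define $\gamma$ to equal $\gamma_{\tilde X}$ on the $\tilde X$-piece and $\gamma_{\tilde Y}+\delta$ on the $\tilde Y$-piece, interpolating linearly along the cylinder directions; define $\kappa$ symmetrically by swapping the roles of $\tilde X$ and $\tilde Y$. By construction $|\gamma - \kappa| \leq \delta$ pointwise, and since each mapping cylinder deformation retracts onto its base, $\Sb(\gamma)_r$ deformation retracts onto $\tilde X_r$ and $\Sb(\kappa)_r$ onto $\tilde Y_r$ for every $r \in \R$.

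The principal technical obstacle is verifying \emph{naturality}: the deformation retractions must assemble into objectwise weak equivalences of $\R$-spaces $\Sb(\gamma)\htp \tilde X$ and $\Sb(\kappa)\htp \tilde Y$, not merely a pointwise collection of equivalences. This is where the cofibrancy of $\tilde X,\tilde Y$ (or an equivalent Reedy hypothesis) becomes essential, and where care must be taken with the CGWH topology on the glued space $T$ and with the fact that the sublevelset function only needs to be set-theoretic (so lower semicontinuity issues can be avoided). Once this geometric realization lemma is in hand, the universality statement follows formally from the stability and homotopy-invariance axioms with no further input.
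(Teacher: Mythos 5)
Your overall reduction matches the paper's exactly: by homotopy invariance it suffices to treat strictly $\delta$-interleaved $X'$, $Y'$, and by stability it then suffices to realize $X'$ and $Y'$, up to weak equivalence, as sublevelset filtrations $\Sb(\gamma)$, $\Sb(\kappa)$ of functions $\gamma,\kappa\colon T\to\R$ on a common space $T$ with $d_\infty(\gamma,\kappa)\leq\delta$. That realization step is the paper's Proposition~\ref{Prop:LiftToFunctions}, and it is exactly where your sketch leaves gaps rather than filling them.

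The first gap: you propose to replace $X'$ and $Y'$ \emph{separately} by honest filtrations $\tilde X,\tilde Y$ and then ``transfer the $\delta$-interleaving to these.'' That transfer is not automatic --- the interleaving morphisms $X'\to Y'(\delta)$ and $Y'\to X'(\delta)$ need not lift along two independently chosen replacements; this is exactly the rectification problem that \cref{Sec:Incoherent_Interleavings_And_Rectification} shows can genuinely fail. The fix (and the paper's choice) is to cofibrantly replace the whole interleaving diagram $Z\colon\I^\delta\to\Top$ as one object of $\Top^{\I^\delta}$, not of $\Top^\R$ as you suggest, and then restrict along $E^0,E^1$. The second gap is that the mapping-cylinder-plus-interpolation construction is never carried out, and the naturality of the pointwise deformation retractions, which you yourself flag as ``the principal technical obstacle,'' is the actual content of the proposition rather than a formal consequence of cofibrancy. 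There is also a prior issue you do not address: $\gamma$ and $\kappa$ must be honest \emph{functions}, i.e., the infimum of birth times at each point of $T$ must be attained, which fails for a generic cofibrant replacement. The paper resolves all three difficulties at once by using the explicit Bousfield--Kan cofibrant replacement $\Q$: for the directed poset $\I^\delta$ (with $\delta>0$), $\Q Z$ is a closed filtration that is \emph{1-critical} (\cref{Prop:CofReplacementIs1Critical}), so the birth-time functions $\gamma^X=\fc^{\Q Z\circ E^0}$ and $\gamma^Y=\fc^{\Q Z\circ E^1}$ are well-defined, one has $\Sb(\gamma^X)\cong\Q Z\circ E^0$ and $\Sb(\gamma^Y)\cong\Q Z\circ E^1$ \emph{on the nose} (\cref{Prop:ImportantEquivalence}, so no deformation-retraction argument is needed at all), and $|\gamma^X-\gamma^Y|\equiv\delta$ on $T=\colim\Q Z$ by inspection. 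Your telescope picture is a reasonable geometric intuition for what $\Q Z$ looks like, but without the explicit construction of $\Q$ and the 1-criticality lemma, the core of the proof is missing.
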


\cref{Thm:Universality} is a homotopy-theoretic analogue of a universality result
for the interleaving distance on multiparameter persistence
modules over prime fields, established in \cite{lesnick2015theory}.  

We give a very brief outline of the proof of \cref{Thm:Universality}, deferring the details to \cref{Sec:Universality}: In analogy with the result of \cite{lesnick2015theory}, the proof of \cref{Thm:Universality} hinges on a lifting result, \cref{Prop:LiftToFunctions}, which says that for any two $\R$-spaces $X$ and $Y$ and $\delta>d_{HI}(X,Y)$, there exists a topological space $T$ and functions $\gamma^X,\gamma^Y:T\to \R$ such that $\Sb(\gamma^X)\htp X$, $\Sb(\gamma^Y)\htp Y$, and $d_\infty(\gamma^X,\gamma^Y)\leq \delta$.  To show this, the main technical step is to observe that a projectively cofibrant diagram of spaces indexed by a directed set $I$ is isomorphic to the sublevel filtration of an $I$-valued function; the desired lifting is then obtained taking a cofibrant replacement of a homotopy interleaving.  We review the definitions from homotopy theory needed to make sense of this in \cref{Sec:Homotopy_Theory_Prelims}.

\subsection{Extensions}
The definition of $d_{HI}$ extends immediately to the multiparameter persistence setting, i.e., to diagrams of spaces indexed by the product poset $\R^n$, as do both~\cref{Thm:Properties_of_dHI,Thm:Universality}.  In fact, further extensions are possible.  For simplicity's sake, we will restrict attention in this paper to the 1-parameter case, i.e., to $\R$-spaces, but we briefly discuss such extensions now.

Several works have considered general notions of interleavings \cite{bubenik2015metrics,scoccola2020locally,de2017theory}, and our main results extend to at least some of these settings.  For instance, Bubenik, de Silva, and Scott define an interleaving distance on diagrams of spaces indexed by a preordered set $P$ \cite[Section 2.5]{bubenik2015metrics}.  The definition depends on a choice of \emph{superlinear family} of morphisms of $P$.  Our definition of $d_{HI}$ extends to this setting, and when $P$ is a directed set, our universality result also extends readily.  The triangle inequality holds in this setting as well; our proof does not immediately extend, but a subsequent proof by Lanari and Scoccola \cite{lanari2020rectification} does extend readily.  The proof in \cite{lanari2020rectification} uses pullbacks, whereas ours uses pushouts; the pullback approach turns out to be better behaved for interleavings defined via superlinear families, and is also simpler.  Further extensions of our universality theorem, e.g., to simplicial sets, appear in the initial version of Lanari and Scoccola's paper on the arXiv, though these were cut from the final version of the paper for brevity's sake. 

Building on our work, Scoccola's Ph.D. thesis \cite{scoccola2020locally} gives a very general and thorough treatment of homotopy interleaving distances and the triangle inequality, as part of a broader program of reframing the theory of interleavings in the language of enriched categories.  
Among other contributions, the thesis introduces several novel examples and applications of generalized homotopy interleavings  and shows that under reasonable conditions, generalized homotopy interleaving distances are metrically complete, i.e., Cauchy sequences converge.

\subsection{Filtration-Level Formulations of Other TDA Results}\label{Sec:Other_Strengthenings}
Using any distance on $\R$-spaces satisfying our stability and homotopy invariance axioms, we can give space-level formulations of several other fundamental TDA results, besides the Rips stability theorem.  We briefly explain this now, deferring the details to  \cref{Sec:Applications}.

First, in \cite[Proposition 4.2]{chazal2014persistence}, Chazal et al. showed that if $X$ and $Y$ are simplicial filtrations whose vertex sets are related by a correspondence that is compatible with the filtrations in a suitable sense, then the barcodes of $X$ and $Y$ are close in the bottleneck distance.  As corollaries, the authors obtained stability results for several simplicial filtrations built from point cloud data, namely the Rips, \v Cech, Dowker, and witness filtrations; in particular, \cref{Thm:RST} is a corollary.
In \cref{Sec:Stabilty_Correspondences}, we observe that any stable and homotopy invariant distance $d$ satisfies a filtration-level analogue of \cite[Proposition 4.2]{chazal2014persistence}; the proof is essentially the same as our proof of \cref{Prop:Two_Axioms}.  Hence, each of the several stability results for simplicial filtrations given in \cite{chazal2014persistence} admits a filtration-level formulation in terms of $d$.

Second, an influential 2013 paper by Sheehy \cite{sheehy2013linear} introduced, for finite metric spaces $P$ of constant doubling dimension, a simplicial filtration whose size is linear in $|P|$ and whose persistent homology is a provably good approximation of that of $\Rips(P)$.  Subsequent work by Cavanna, Jahanseir, and Sheehy \cite{cavanna2015geometric} gave a more intuitive geometric variant of this construction, which also provides sparse approximation guarantees for \v Cech complexes.  Using any distance satisfying our axioms, the main approximation results of both papers lift to the level of filtrations.   In \cref{Sec:Sparse_Approximation}, we show this explicitly for the result of \cite{cavanna2015geometric}.

Third, we show in \cref{Prop:WLLN} that any distance satisfying our axioms can be used to formulate a simple weak law of large numbers for the persistent homology of \v Cech filtrations.  This follows readily from standard arguments, as, e.g,  in~\cite{chazal2013clustering,niyogi2009finding}.  
Notably, this weak law concerns the relationship between simplicial and non-simplicial filtrations, and thus illustrates the utility of a formalism which can handle non-simplicial filtrations.  Along very similar lines, one can give a filtration-level strengthening of a consistency result for a \v Cech-based estimator of the persistent homology of a probability density function, as given in \cite{chazal2013clustering} and \cite[Theorem 4.5.2]{lesnick2012multidimensional}.  Since the underlying ideas are quite similar to those of \cref{Prop:WLLN}, we will not discuss the details here.  

As another application of our ideas, our recent work \cite{blumberg2020stability} gives stability results and a weak law of large numbers for 2-parameter persistent homology, using a 2-parameter analogue of the homotopy interleaving distance.  The arguments depend only on 2-parameter analogues of our stability and homotopy invariance axioms.  

There are many other TDA theorems that bound the bottleneck distance between the barcodes of filtrations, besides those mentioned above, e.g., those in \cite{choudhary2021improved,brehm2018sparips,sheehy2021sparse,botnan2015approximating,brun2019sparse,dey2019simba,bobrowski2017topological,chowdhury2018functorial}.  We expect that nearly all of them can be formulated on the space level using any distance satisfying our axioms.

\subsection{Non-Universality of the Homotopy Commutative Interleaving Distance}\label{Sec:Non-Universality}
There is another simple definition of a stable and homotopy invariant interleaving distance on $\R$-spaces, the \emph{homotopy commutative interleaving distance} $d_{HC}$, given in terms of diagrams in the homotopy category of spaces; see \cref{Sec:Incoherent_Interleavings_And_Rectification}.  This is the first definition of a homotopy invariant interleaving distance that most TDA specialists would think of.  We conjecture that $d_{HC}\ne d_{HI}$, and hence that $d_{HC}$ is not universal.  In \cref{Ex:Unrectifiable}, we give an example of a based homotopy commutative interleaving diagram that cannot be rectified; this serves as evidence in support of the conjecture.

Following the release of the first version of this paper, Lanari and Scoccola \cite{lanari2020rectification} have proven this conjecture, via a similar example.  They also have shown that, conversely, $d_{HI}\leq 2\, d_{HC}$.  The latter inequality holds only in the 1-parameter setting; they have proven that in the multiparameter setting, $d_{HI}\not\leq c\, d_{HC}$ for any positive constant $c$. 

\subsection{Persistent Whitehead Conjectures}\label{Sec:Persistent Whitehead}
With a good definition of ``approximate weak equivalence" of
$\R$-spaces in hand, we are led to ask how other aspects of homotopy
theory might extend to the approximate setting.  For example, one has a Whitehead theorem for $\R$-spaces, which says that an objectwise weak equivalence of cofibrant $\R$-spaces is a homotopy equivalence.  It is natural to ask whether one can use the language of interleavings to formulate a persistent analogue of this result.  We explore this problem in \cref{Sec:Persistent_Whitehead}, considering along the way the question of how to define persistent homotopy groups.  We present an example showing that in its most naive formulation, the
persistent Whitehead theorem does not hold, even up to a constant, and offer a persistent Whitehead conjecture for cofibrant diagrams of CW-complexes of bounded dimension.  As mentioned above, Lanari and Scoccola \cite{lanari2020rectification} have proven this conjecture under a mild additional assumption.

\subsection{Other Related Work}\label{Sec:Related_Work}
This work is, in part, an outgrowth of a chapter in the second author's Ph.D. thesis, which introduced a pseudometric $d_{WI}$ on filtrations satisfying the stability, homotopy invariance, and homology bounding axioms considered in this paper \cite[Chapter 3]{lesnick2012multidimensional}.  This chapter showed that the strict interleaving distance on $\R$-spaces satisfies a universality property, and raised but did not answer the question of whether $d_{WI}$ is universal.  It is clear that $d_{WI}\leq d_{HI}$, but the problem of determining whether $d_{HI}=d_{WI}$ is non-trivial, due to technical issues related to homotopy coherence; see \cref{Sec:Incoherent_Interleavings_And_Rectification} and \cref{Remark:Interleaving_Vs_Approximate_Homotopy}.

Around the same time \cite{lesnick2012multidimensional} was completed, M\'emoli \cite{facundo2012banff,facundo2012atmcs,memoli2017distance} introduced a definition of a pseudometric $d_F$ on simplicial filtrations which can be used to provide a refinement of the Rips Stability theorem analogous to the one we give using $d_{HI}$. A different definition of $d_F$ is also implicit in the work of Chazal et al. \cite[Section 4.1]{chazal2014persistence}.  The definition of $d_F$ does not extend naively to non-simplicial filtrations, and it was shown in \cite[Section 6.8]{scoccola2020locally} that $d_F$ is not homotopy invariant.  However, our strategy for proving that $d_{HI}$ satisfies the triangle inequality was inspired by M\'emoli's proof of the triangle inequality for $d_F$ \cite{memoli2017distance}, which hinges on a pullback construction.

Besides the above, several other works have considered foundational aspects of persistent homotopy theory: Letscher introduced and studied a definition of persistent homotopy groups in the context of knot theory \cite{letscher2012persistent}, which was later also studied by Batan, Pamuk, and Varli \cite{batan2019persistent}.  As a step towards applying ideas from quantitative homotopy theory to TDA, Blumberg and Mandell~\cite{BM} introduced and studied filtered simplicial analogues of mapping spaces called \emph{contiguity complexes}.  M\'emoli and Zhao \cite{memoli2019persistent} studied several notions  of persistent homotopy groups of metric spaces.  
Frosini, Landi, and M\'emoli \cite{frosini2017persistent} strengthened the well known stability result for the persistent homology of $\R$-valued functions \cite{cohen2007stability} to pairs of functions with homotopy equivalent (but not necessarily homeomorphic) domains.  Jardine \cite{jardine2020persistent} introduced and studied \emph{controlled systems}, i.e., morphisms of $[0,\infty)$-spaces whose induced maps on persistent homotopy groups have kernels and cokernels with bounded persistence; such morphisms are closely related to interleavings \cite{bauer2015induced}.  Finally, building on ideas of Patel \cite{patel2018generalized}, Ghrist and Henselman-Petrusek \cite{ghrist2021saecular} recently introduced a novel generalization of the usual interval decomposition of a persistence module, which applies in particular to functors from a totally ordered set to the category of groups; this provides a notion of barcode for persistent homotopy groups.

\subsection{Outline of the Paper}

\cref{Sec:Homotopy_Theory_Prelims} provides a brief review of the 
tools from category theory and homotopy theory needed in our proofs.  \cref{Sec:Homotopy_Interleavings} reviews the
ordinary interleaving distance and introduces the
homotopy interleaving distance $d_{HI}$.  \cref{Sec:Triangle_Ineq}
gives the proof of the triangle inequality for $d_{HI}$, thereby
establishing \cref{Thm:Properties_of_dHI}, and \cref{Sec:Universality}
gives the proof of \cref{Thm:Universality}, our universality result for $d_{HI}$.   \cref{Rips_Stability_Refinement} gives the proof of
\cref{Prop:Two_Axioms}, which uses $d_{HI}$  to
lift the Rips stability theorem to the level of filtration.  \cref{Sec:Stabilty_Correspondences,Sec:Sparse_Approximation,Sec:WLLN} detail the other applications of $d_{HI}$ described in \cref{Sec:Other_Strengthenings}.  
\cref{Sec:Incoherent_Interleavings_And_Rectification} gives a
characterization of $d_{HI}$ in terms of
homotopy coherent diagrams of spaces, and explains the difficulties of
using homotopy commutative rather than homotopy coherent diagrams for
this.  \cref{Sec:Persistent_Whitehead} discusses the persistent Whitehead problem.

\subsection*{Acknowledgments}

We thank David Blanc, Gunnar Carlsson, Rick Jardine, Tyler Lawson, Mike Mandell, Facundo M\'emoli, Luis Scoccola, and Hiro Tanaka for helpful discussions.  We also thank Fabian Roll for pointing out an error in \cref{Sec:Hoco} of the first version of this paper, and an anonymous reviewer for very helpful suggestions which improved the exposition of the paper.  Both authors thank the Institute for
Mathematics and its Applications for its hospitality and support.  In addition, Lesnick thanks Robert Adler, Raul Rabadan, Jon Cohen, the Institute for Advanced Study, and the Princeton Neuroscience Institute for their support during various phases of this project. Lesnick was partially supported by NSF grant DMS-1128155, NIH grants U54-CA193313-01 and T32MH065214, funding from the IMA, and an award from the J. Insley Blair Pyne Fund.  Blumberg was partially supported by NIH grant 5U54CA193313 and AFOSR grant FA9550-15-1-0302.

\section{Background}\label{Sec:Homotopy_Theory_Prelims}
  In this section, we briefly review standard ideas from category theory and homotopy theory that we will need in the remainder of the paper: We discuss CGWH spaces, comma categories, Kan extensions, model categories, homotopy colimits, and homotopy Kan extensions.  We assume familiarity with other basic concepts from category theory, particularly limits and colimits.  For an introduction to category theory, see \cite{mac1998categories} or \cite{riehl2017category}.

\subsection{CGWH Spaces}\label{Sec:CGWH}
Recall that in \cref{Sec:Persistent_Homology} we defined $\Top$ to be the category of
compactly-generated weakly Hausdorff (CGWH) spaces.  It is
standard in modern homotopy theory to restrict attention to this
category because it contains most spaces that arise in practice and
its mapping spaces behave well \cite{may1999concise,riehl,strickland2009category}.  Specifically, the isomorphism of sets
$\Map(X \times Y, Z) \cong \Map(X, \Map(Y,Z))$ becomes a homeomorphism
in this category when we give the mapping spaces the compact-open
topology.  Any locally
compact Hausdorff space is a CGWH space, so in particular any space
homeomorphic to a CW complex 
complex 
is CGWH.  In this paper, working with CGWH spaces will be convenient for proving the university of $d_{HI}$.  

Recall that the category $\mathbf E$ is said to be \emph{cocomplete (respectively, complete)} if for any small category $\I$ and functor $F:\I\to \mathbf E$, the colimit (limit) of $F$ exists.  The category $\Top$ of CGWH spaces is
complete and cocomplete, although the colimits are not
always the same as those in the category of all topological spaces;
e.g., see~\cite{strickland2009category}.

Henceforth, all topological constructions in this paper will be 
carried out in the context of CGWH spaces.  We
will not comment on this point further, except when necessary in \cref{Sec:Concrete_Model}. 
\subsection{Comma Categories}\label{Sec:Slice}
Given a functor $G \colon \C \to \D$ and an object $d
\in \D$, the \emph{comma category} $G\downarrow d$ is the category whose objects are pairs $(c,\gamma\colon Gc\to d)$ where $c\in \ob \C$, and 
whose morphisms $\kappa:(c,\gamma) \to (c',\gamma')$ are morphisms $c\to c'$ in $\C$ such that the following diagram commutes:
\[
\begin{tikzcd}
  Gc \ar[rr,"G\kappa"] \ar[dr,"\gamma",swap] & & Gc' \ar[dl,"\gamma'"] \\
  &                d & &
\end{tikzcd}
\] 
Dually, $d \downarrow G$ is the category whose objects are pairs $(c,\gamma\colon d\to Gc)$ where $c\in \ob \C$, and 
whose morphisms $\kappa:(c,\gamma) \to (c',\gamma')$ are morphisms $c\to c'$ in $\C$ such that the following diagram commutes:
\[
\begin{tikzcd}
  Gc \ar[rr,"G\kappa"]  & & Gc' \\
  &                d\ar[ul,"\gamma",swap] \ar[ur,"\gamma'"] & &
\end{tikzcd}
\] 

\begin{remark}\label{rem:Comma_Category_Special_Case}
In our proof of the triangle inequality for $d_{HI}$, we will encounter the following simple kinds of comma categories: Recall that a category $\D$ is called \emph{thin} if for all $d,d'\in \ob \D$, there is at most one morphism from $d$ to $d'$.
Let $\C$ be a full subcategory of a thin category $\D$.  For $G:\C\to \D$ the inclusion and $d\in \ob \D$, we have that $G\downarrow d$ is (up to
canonical isomorphism) the full subcategory of $\C$ with object
set \[\{c\in \ob \C\mid \exists \textup{ a morphism $c\to d$ in
  $\D$}\}.\] The category $d\downarrow G$ has an analogous description.
\end{remark}

\subsection{Kan Extensions}\label{Sec:Kan}
A {\em left Kan extension} of a functor $F \colon \C \to \mathbf E$ along a functor $G \colon
\C \to \D$ is
a functor $L_G F \colon \D \to \mathbf E$
\[
\begin{tikzcd}
\C \ar[r,"F"] \ar[d,"G", swap] & \mathbf E \\
\D \ar[ur,"L_G F", swap] &
\end{tikzcd}
\]
together with a natural transformation
\[
\eta \colon F \Rightarrow L_G F \circ G
\]
that is universal in the sense
that for any other pair
\[
(H \colon \D \to \mathbf E, \gamma \colon F \Rightarrow H \circ G),
\]
$\gamma$ factors uniquely through $\eta$.  

A {\em right Kan
  extension} is defined by reversing the direction of the natural
transformations, i.e., it is a functor $R_G F \colon \D \to \mathbf E$ together with a
natural transformation $\eta \colon R_G \circ G \Rightarrow F$ satisfying the analogous universal property.  

In the case that $\C$ is small and $\mathbf E$ is cocomplete, the left Kan extension exists, and one has 
a pointwise formula for it in terms of colimits: For all $d\in \ob \D$, we have
\begin{equation}\label{Eq:LKE_Pointwise}
(L_{G} F)_d=\colim_{\,G\downarrow d} F
\end{equation}
and moreover, the internal morphisms in $L_G F$ are given by the
universality of the colimit \cite[Theorem X.3.1]{mac1998categories}.  Dually, for $\mathbf E$ complete, the right Kan extension exists, and one has a pointwise formula in terms of limits.  

For $\mathbf E$ cocomplete, the left Kan extension is functorial, i.e., the functors $L_GF$ assemble into a functor \[L_G:\mathbf E^\C\to \mathbf E^\D.\]
Dually, for $\mathbf E$ complete, the right Kan extension is functorial in the same sense.

\subsection{Model Categories}\label{Sec:Model_Cats}

The basic object of study in the homotopy theory of topological spaces
is the {\em homotopy category} $\Ho(\Top)$, obtained from $\Top$ by
formally inverting the weak homotopy
equivalences.  However, it turns out that many constructions, e.g., homotopy (co)limits, are difficult to carry
out directly in $\Ho(\Top)$.  Thus, we instead do these 
constructions in $\Top$ and study their interaction with weak
equivalences.  For this, additional scaffolding on $\Top$ is usually employed, in the form of
distinguished maps called {\em cofibrations} (which generalize closed
inclusions and are intended to have ``nice'' quotients) and {\em
  fibrations} (which generalize bundles and are intended to have
``nice'' fibers).  

A model category is an abstraction of this structure.  In what follows, we give the definition of a model category and discuss a few ideas from model category theory that will be needed in this paper.  For a detailed
introduction to model categories we recommend the survey
article~\cite{dwyer1995homotopy} or, for comprehensive treatments, the books~\cite{hovey,hirschhorn}.

\begin{definition}\label{Def:Model_Cat}
A \emph{model category} \cite[Definition 1.1.3]{hovey} is a complete and cocomplete category $\C$, together with three distinguished
collections of morphisms in $\C$, called the \emph{weak equivalences},
\emph{fibrations}, and \emph{cofibrations}, satisfying the four axioms below.  We say a (co)fibration is \emph{acyclic} if it is also a weak equivalence.

\begin{enumerate}

\item The weak equivalences contain all isomorphisms and
  satisfy the ``two out of three property'': for maps $f \colon X \to
  Y$ and $g \colon Y \to Z$, if any two of $f$, $g$, $g \circ f$ are
  weak equivalences then so is the third.

\item The weak equivalences, cofibrations, and fibrations are closed under retract; that is, if there is a commutative diagram
\[
\begin{tikzcd}
  X \ar[]{r} \ar["f",swap]{d} & Y\ar["g",swap]{d} \ar[]{r} &X\ar["f",swap]{d}\\
  X' \ar[]{r} & Y'\ar[]{r} &X',
\end{tikzcd}
\]
where the horizontal composites are the identity and $g$ is in the
class, then so is $f$.

\item In the commutative square
\[
\begin{tikzcd}
  A \ar[]{r} \ar["f",swap]{d} & X\ar["g"]{d} \\
  B \ar[]{r} & Y,
\end{tikzcd}
\]
if either
\begin{enumerate}
\item $f$ is an acyclic cofibration and $g$ is
  a fibration, or
\item $f$ is a cofibration and $g$ is an acyclic fibration,
\end{enumerate}
then there exists a lift $B \to X$ that makes the diagram commute.

\item Any morphism in $\C$ factors functorially as a composite of a
cofibration followed by an acyclic fibration and also as a composite of an
acyclic cofibration followed by a fibration.

\end{enumerate}

(It is sometimes convenient to drop the requirement that the
factorizations of axiom 4 are functorial, but we will
work in situations where this holds.)
\end{definition}

The following standard fact will be useful to us in our proof of the triangle inequality for $d_{HI}$.
\begin{proposition}[{\cite[Proposition 3.14]{dwyer1995homotopy}}]\label{Prop:Cobase_Change}
Given a pushout square in $\C$
\[
\begin{tikzcd}
  A \ar[]{r} \ar["f",swap]{d} & X\ar["g"]{d} \\
  B \ar[]{r} & Y,
\end{tikzcd}
\]
\begin{itemize}
\item[(i)] if $f$ is a cofibration, then so is $g$,
\item[(ii)] if $f$ is an acyclic cofibration, then so is $g$.
\end{itemize}
The dual result (involving pullbacks and (acyclic) fibrations) also holds.
\end{proposition}

Since a model category $\C$ is complete and cocomplete, it has an initial object $\emptyset$ and a final
object $\ast$.  We say $X\in \ob \C$ is \emph{cofibrant} if the unique
morphism $\emptyset\to X$ is a cofibration; dually, an object is
{\em fibrant} if the unique morphism $X \to \ast$ is a fibration.
Applying the functorial factorization axiom above to morphisms $\emptyset \to X$ 
yields a \emph{cofibrant replacement functor}
$Q\colon \C\to \C$ with each $QX$ cofibrant, and
a natural transformation $Q\to \mathrm{Id}_\C$ which is an acyclic
fibration on each object of $\C$.

In many cases, the cofibrant objects are inductively built from simple
pieces in a way that generalizes the notion of a CW complex.  In
particular, when a model category is \emph{compactly generated} (e.g., see 
\cite[Section 15.2]{mayponto}) we have a set of maps $I$, called {\em generating cofibrations}, such that cofibrations are built
from countable filtered colimits of ``cell attachments'' along maps in
$I$.  Here is a precise statement:

\begin{proposition}\label{prop:cell-complex}
In a compactly generated model category, an object is cofibrant if and only if it is a retract of
the colimit (union) of a diagram
\[
Z_0 \to Z_1 \to Z_2 \to \ldots,
\]
where $Z_0 = \emptyset$ and $Z_i$ is formed from $Z_{i-1}$ as a pushout
\[
\begin{tikzcd}
  \coprod_i X_i  \ar[]{r} \ar["\coprod_i I_i",swap]{d} & Z \ar[]{d} \\
  \coprod_i Y_i  \ar[]{r} & Z',
\end{tikzcd}
\]
with each $I_i$ a generating cofibration.
\end{proposition}

\begin{example}
In the \emph{standard model
  structure} on $\Top$, the weak equivalences are the weak homotopy equivalences and the fibrations are the Serre
fibrations; the cofibrations are then determined from the acyclic
fibrations by the model category axioms.  Henceforth, weak equivalences, fibrations,
and cofibrations of topological spaces will be understood to be those
in the standard model structure.  This model category is compactly generated, where the generating cofibrations are the boundary
inclusions $S^{n-1} \to D^{n}$; the cofibrant objects are retracts of
cell complexes, and the cofibrations in the
standard model structure admit a concrete description, as
\emph{retracts of generalized CW inclusions}~\cite{dwyer1995homotopy}. 
\end{example}

\begin{example}\label{Ex:Projective_Model}
For any small category $\I$, there exists a model category structure
on $\Top^\I$, the \emph{projective model structure}, for which the
weak equivalences are the objectwise weak equivalences and the
fibrations are the objectwise fibrations~\cite[Section 11.6]{hirschhorn}.  For most choices of $\I$,
objectwise cofibrations are not necessarily cofibrations. 
However, it is straightforward to check that if $X\in
\Top^\I$ is cofibrant, then each object in $X$ is cofibrant, and each
internal map in $X$ is a cofibration.  All objects are fibrant in the standard model structure on $\Top$,
hence all objects of $\Top^\I$ are fibrant in the projective model
structure.  

The projective model
structure on $\Top^\I$ is compactly generated, with generating cells
the {\em free diagrams} on the generating cofibrations in $\Top$ \cite[Theorem 6.5]{MMSS}.  Here the free diagram on a space $A$ at
$i \in \ob \I$, denoted $F_i A$, is specified by the formula $F_i A(j) =
\hom_{\I}(i,j) \times A$, and the free diagram at $i$ of a map $A\to B$ is the induced map $F_iA\to F_iB$. 
\end{example}

\bparagraph{Homotopy Categories}
One can construct the associated \emph{homotopy
category} $\Ho(\C)$ of any model category $\C$ \cite[Definition
  5.6]{dwyer1995homotopy}.  The category $\Ho(\C)$ has 
the same collection of objects as $\C$ and is equipped with a functor $\Pi^{\C} \colon \C\to
\Ho(\C)$ which is the identity on objects. $\Pi^{\C}$ is the 
\emph{localization} of $\C$ with respect to the weak equivalences \cite[Theorem 6.2]{dwyer1995homotopy}),
i.e., it maps weak equivalences to isomorphisms, and for any functor
$F\colon \C\to \D$ with this property, there exists a unique functor
$G\colon \Ho(\C)\to \D$ such that the following diagram commutes:
\[
\begin{tikzcd}
  \C \ar["F"]{r} \ar["\Pi^\C",swap]{d} & \D \\
  \Ho(\C) \ar["G",swap]{ur} &
\end{tikzcd}
\]
In particular, up to equivalence of categories, $\Ho(\C)$ depends only on the weak equivalences of $\C$, not on the (co)fibrations.  We let \[
\begin{tikzcd}[ampersand replacement=\&,column sep=4ex]
X\ar["\simeq"]{r} \& Y
\end{tikzcd}
\]
denote a weak equivalence from $X$ to $Y$.  We say $X$ and $Y$ are \emph{weakly equivalent} and write $X\htp Y$ if
$X,Y\in \ob \C$ are isomorphic in $\Ho(\C)$.  

The fact that $\Pi^{\C}$ is a localization implies that $X\htp Y$ if and only if there is a zigzag of weak equivalences in $\C$ connecting $X$ and $Y$.  Thus, two diagrams of spaces are weakly equivalent with respect to the projective model structure if and only if they are weakly equivalent in the sense of \cref{Def:Weakly_Equivalent}.

In fact, one can check that in any model category $\C$, $X\htp Y$ if and only if there exists a diagram of weak equivalences
\[
\begin{tikzcd}[,column sep=4ex,row sep=2ex]
& Z_1 & \ar["\simeq",swap]{l} Z_2 \ar["\simeq"]{dr} & \\
X \ar["\simeq"]{ur} & & & Y.
\end{tikzcd}
\]
See, e.g., \cite[Section 3.2]{dwyer2005homotopy}.  
Moreover, it is readily checked that if either all objects of $\C$ are fibrant or all objects are cofibrant, then 
$X\htp Y$ if and only if there exist weak equivalences
\[
\begin{tikzcd}[ampersand replacement=\&,column sep=2ex,row sep=2ex]
\& Z\ar["\simeq",swap]{dl}\ar["\simeq"]{dr}\\
X \& \& Y.
\end{tikzcd}
\]
In particular, this applies to the projective model structure, whose objects are all fibrant.

\subsection{Homotopy Colimits and Homotopy Left Kan extensions}\label{Sec:Hocolims_And_LKEs}
Homotopy (co)limits are analogues of (co)limits that are invariant
under weak equivalence.  Similarly, homotopy Kan extensions are analogues of Kan extensions
that are invariant under weak equivalence.  The proofs of our main results do not require explicit use of homotopy (co)limits or homotopy Kan extensions.  However, the construction underlying our proof of the triangle inequality for $d_{HI}$ in \cref{Sec:Triangle_Ineq} can be interpreted as a homotopy left Kan extension, and is arguably best viewed in that light.  Moreover, homotopy left Kan extensions of diagrams of spaces are given pointwise in terms of homotopy colimits, so understanding homotopy colimits is helpful for understanding homotopy left Kan extensions.  We therefore briefly discuss homotopy colimits and homotopy left Kan extensions here.

There are several (weakly
equivalent) ways to define homotopy colimits.  We will give a definition 
in terms of model categories and derived functors.  
Alternatively, one can define homotopy colimits using explicit formulas (e.g., the Bousfield-Kan formula), via 
a homotopy coherent analogue of
the universal property of colimits~\cite{vogt1973homotopy}, or using
the language of homotopical categories and homotopy initial
objects~\cite{dwyer2005homotopy}.  Thorough discussions of homotopy
colimits can be found in~\cite{riehl,shulman2006homotopy,dugger2008primer}.  

For model categories $\C$ and $\D$ and a functor $F\colon \C\to \D$,
the \emph{total left derived functor of $F$} is the functor $\ldf{F}\colon \Ho(\C)
\to \Ho(\D)$ given by the right Kan extension of $\Pi^\D F\colon \C\to
\Ho(\D)$ along $\Pi^\C\colon \C\to \Ho(\C)$.  We say a functor $\tilde F\colon \C\to \D$ \emph{computes $\ldf{F}$} if the following diagram commutes, up to natural isomorphism:
\[
\begin{tikzcd}
\C \ar{r}{\tilde F}\ar[swap]{d}{\Pi^\C} &\D\ar{d}{\Pi^\D}\\
\Ho(\C) \ar[swap]{r}{\ldf{F}} &\Ho(\D)
\end{tikzcd}
\]
If $F$ preserves weak equivalences between cofibrant objects, then the total left derived functor of $F$ exists, and for $Q\colon \C\to \C$ a
cofibrant replacement functor, $F\circ Q$ computes $\ldf{F}$ 
\cite[Section 9]{dwyer1995homotopy}.   
   
Let $\colim\colon\Top^\I\to \Top$ denote the colimit functor.  We define \[\hocolim\colon \Ho(\Top^\I)\to \Ho(\Top),\] the
\emph{homotopy colimit functor}, to be the total left derived functor
of $\colim$, with respect to the projective and standard model structures.
Since $\colim$ sends weak equivalences between cofibrant
diagrams in $\Top^\I$ to weak equivalences~\cite[Remark 9.8, Lemma 9.9
  and Proposition 10.7]{dwyer1995homotopy}, $\colim \comp Q$ computes $\hocolim$.

For a functor
$F\colon \I\to \J$, we define the \emph{homotopy left Kan
extension} to be the total left derived functor of $\Lan_F\colon \Top^\I\to \Top^\J$.
As with homotopy colimits, $\Lan_F \circ Q$ computes the homotopy left Kan
extension.  Moreover, the homotopy left Kan
extension can be given explicitly at each index as a homotopy colimit, via a formula analogous to \cref{Eq:LKE_Pointwise}; see, e.g.,  \cite[Proposition 1.14]{cisinski2009locally}.    

\section{The Homotopy Interleaving Distance}\label{Sec:Homotopy_Interleavings}

In this section, we define homotopy interleavings and the homotopy
interleaving distance.  We begin by recalling the definition of
ordinary interleavings. 
\subsection{Interleavings}\label{Sec:Interleavings}
Given a thin category $\C$, a functor $F\colon \C\to \D$, and a morphism $g\colon a\to b$ in $\C$, we denote $F(g)$ as $F_{a,b}$.  

For $\delta\geq 0$, let the \emph{$\delta$-interleaving category}, denoted $\I^\delta$, be the thin category with object set $\R\times \{0,1\}$ and a morphism $(r,i)\to (s,j)$ if and only if either  
\begin{enumerate}
\item $r+\delta \leq s$, or
\item $i=j$ and $r\leq s$;
\end{enumerate}
There are evident functors 
\[
E^0,E^1\colon\RCat\to \I^\delta
\]
mapping $r\in \R$ to $(r,0)$ and $(r,1)$, respectively.

\begin{definition}
For $\C$ any category and functors $X,Y\colon \RCat \to \C$, we define a \emph{$\delta$-interleaving} between $X$ and $Y$ to be a functor
\[
Z\colon \I^\delta\to \C
\]
such that $Z\circ E^0=X$ and $Z\circ E^1=Y$.
\end{definition}

Let $X(\delta)\colon \R\to \C$ be the functor obtained by shifting each object and morphism of $X$ downward by $\delta$, i.e., $X(\delta)_r:=X_{r+\delta}$ and $X(\delta)_{r,s}:=X_{r+\delta,s+\delta}$ for all $r\leq s\in \R$.  Note that a $\delta$-interleaving $Z$ between $X$ and $Y$ restricts to a pair of natural transformations $X\to Y(\delta)$ and $Y\to X(\delta)$ and that, conversely, these natural transformations fully determine $Z$; we call these natural transformations \emph{$\delta$-interleaving morphisms}.  In the case $\delta$=0, they are simply an inverse pair of natural isomorphisms between $X$ and $Y$.

\begin{definition}
We define 
\[
d_I\colon \ob \C^\RCat \times \ob \C^\RCat \to [0,\infty],
\]
the \emph{interleaving distance}, by taking  
\[
d_I(X,Y)=\inf\, \{\delta \mid X\textup{ and  }Y\textup{
    are }\delta\textup{-interleaved}\}.
\]
\end{definition}
It easy to check that if $W$ and $X$ are
$\delta$-interleaved, and $X$ and $Y$ are $\epsilon$-interleaved, then
$W$ and $Y$ are $(\delta+\epsilon)$-interleaved; it follows easily that $d_I$ is a distance on
$\ob \C^\RCat$.  Moreover, if we
have $X,X',Y\in \ob \C^\RCat $ with $X\cong X'$, then
$d_I(X,Y)=d_I(X',Y)$, so $d_I$ descends to a distance on
isomorphism classes of objects in $\C^\RCat$.

\subsection{Algebraic Stability}
The \emph{algebraic stability theorem}, a generalization of the Rips stability theorem (\cref{Thm:RST}), is a central result in the theory of persistent homology.  It was introduced by Chazal et al. \cite{chazal2009proximity},  building on earlier work by Cohen-Steiner et al. \cite{cohen2007stability} on the stability of persistent homology for $\R$-valued functions.  Since then, the result has been revisited in several papers, which have provided simpler proofs and more general formulations \cite{lesnick2015theory,chazal2012structure,bauer2015induced,bjerkevik2016stability,botnan2016algebraic,bjerkevik2021ell}.  In particular, it has been observed that the converse to the algebraic stability theorem also holds \cite{lesnick2015theory}; this is an easy consequence of the structure theorem for persistence
modules~\cite{crawley2012decomposition}.  

We state a sharp form of the algebraic stability theorem for pointwise finite dimensional (\pfd) persistence modules, as appears in \cite{bauer2015induced}:

\begin{theorem}[Forward and converse algebraic stability]\label{Thm:IsometryTheorem}
A pair of \pfd persistence modules $M$ and $N$ are
$\delta$-interleaved if and only if there exists a $\delta$-matching
between $\B M$ and $\B N$. In particular, \[d_B(\B M,\B N)=d_I(M,N).\] 
\end{theorem}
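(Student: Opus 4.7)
The plan is to reduce everything to the indecomposable decomposition supplied by the structure theorem for p.f.d.\ persistence modules (Crawley-Boevey), and then treat the two implications of the ``if and only if'' asymmetrically: the reverse implication is a direct summand-by-summand construction, while the forward implication uses the induced matching machinery of Bauer--Lesnick. The final identity $d_B(\barc{M},\barc{N})=d_I(M,N)$ then falls out by taking infima of $\delta$ on each side.

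First I would handle the easier direction: from a $\delta$-matching to a $\delta$-interleaving. By the structure theorem, write $M=\bigoplus_{I\in\barc{M}} k[I]$ and $N=\bigoplus_{J\in\barc{N}} k[J]$, where $k[I]$ is the interval module supported on $I$. Given a $\delta$-matching $\sigma$ between $\barc{M}$ and $\barc{N}$, I would define the interleaving morphisms $\phi\colon M\to N(\delta)$ and $\psi\colon N\to M(\delta)$ block-diagonally: on a matched pair with $\sigma(I)=J$, the condition $I\subseteq\Ex(J,\delta)$ and $J\subseteq\Ex(I,\delta)$ makes $k[I]$ and $k[J]$ canonically $\delta$-interleaved (essentially via the identity where both intervals are defined); on an unmatched interval $I\in\barc{M}\setminus\mathcal{C}^{2\delta}$, the requirement that $I$ contains no sub-interval of the form $[r,r+2\delta]$ implies that $k[I]$ is $\delta$-interleaved with the zero module, and similarly for unmatched intervals in $\barc{N}$. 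Checking that the block sum is indeed a $\delta$-interleaving reduces to the summand-wise check, since the interleaving conditions are natural in direct sums.

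The forward direction is the main obstacle; this is where I would invoke the induced matching construction. Given any morphism $f\colon A\to B$ of p.f.d.\ persistence modules, one can produce a canonical matching $\chi(f)\colon \barc{A}\to\barc{B}$ by analyzing the barcodes of $\im f$ and comparing them to those of $A$ and $B$ via the canonical factorization $A\twoheadrightarrow\im f\hookrightarrow B$; the crucial computation is that for a surjection the induced matching shifts birth times left and preserves death times, and for an injection it preserves birth times and shifts death times right (all in a controlled quantitative way). Given a $\delta$-interleaving pair $(\phi,\psi)$ with $\phi\colon M\to N(\delta)$ and $\psi\circ\phi=$ shift by $2\delta$, I would apply $\chi$ to $\phi$ and then to the shift $M\to M(2\delta)$ to compare, and verify two things about the resulting matching between $\barc{M}$ and $\barc{N}$: every interval in $\barc{M}^{2\delta}$ and $\barc{N}^{2\delta}$ gets matched (this is where the $2\delta$ condition in the definition of a $\delta$-matching enters, via the observation that an interval containing $[r,r+2\delta]$ must survive a composite shift-by-$2\delta$ map), and each matched pair $(I,J)$ satisfies $I\subseteq\Ex(J,\delta)$ and $J\subseteq\Ex(I,\delta)$. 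The work here is primarily bookkeeping about births and deaths of interval summands through the two canonical factorizations.

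Finally, once both implications are established, the inequality $d_I(M,N)\le d_B(\barc{M},\barc{N})$ follows by taking the infimum over $\delta$-matchings and feeding them through the reverse direction, while $d_B(\barc{M},\barc{N})\le d_I(M,N)$ follows from the forward direction. Thus $d_B(\barc{M},\barc{N})=d_I(M,N)$. The hardest step, as indicated, is verifying that the induced matching from $\phi$ meets the quantitative requirements of a $\delta$-matching; everything else is either the structure theorem or direct construction.
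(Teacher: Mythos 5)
The paper does not prove this theorem---it states it as a known result and cites \cite{bauer2015induced} for the sharp forward direction and \cite{lesnick2015theory,crawley2012decomposition} for the converse---so there is no in-paper argument to compare against. That said, your plan correctly identifies the two standard proofs from the literature: the converse direction as a direct block-sum construction from the interval decomposition (your treatment of matched pairs via shifted identities, and of unmatched intervals as $\delta$-interleaved with $0$ because they contain no $[r,r+2\delta]$, is exactly right), and the forward direction via the Bauer--Lesnick induced matching obtained by factoring a morphism through its image.

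One concrete error to flag: you have the quantitative behavior of the induced matching reversed. In Bauer--Lesnick, reading the matching from source barcode to target barcode, a \emph{surjection} preserves births and can only shorten intervals on the right (deaths move left or stay put), while an \emph{injection} preserves deaths and can only lengthen intervals on the left (births move left or stay put). Your description (``for a surjection the induced matching shifts birth times left and preserves death times, and for an injection it preserves birth times and shifts death times right'') has the preserved endpoint wrong in both cases and the shift direction wrong for injections. This matters: the whole quantitative bookkeeping that turns the $\delta$-interleaving hypothesis into the $\delta$-matching conclusion depends on knowing exactly which endpoint is rigid under each canonical factor. With the corrected lemma, the rest of your outline---use the composite $\psi(\delta)\circ\phi = \varphi^{M,2\delta}$ to control which intervals survive, then verify that the matched pairs satisfy $I\subseteq\Ex(J,\delta)$ and $J\subseteq\Ex(I,\delta)$ and that everything in $\barc{M}^{2\delta}$ and $\barc{N}^{2\delta}$ is matched---goes through as in the cited reference.
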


\begin{remark}\label{Rem:Not_Homotopy_Invariant}
It is easily to see that the interleaving distance $d_I$ on $\R$-spaces is stable.  A $\delta$-interleaving between $\R$-spaces $X$ and $Y$ induces a $\delta$-interleaving between $H_iX$ and $H_iY$ for all $i\geq 0$, so it follows from \cref{Thm:IsometryTheorem} that $d_I$ is homology bounding.  However, $d_I$ is not homotopy invariant: Consider $\R$-spaces $X$ and $Y$ with  $X_r=\{0\}$ and $Y_r=\R$ for all $r\in \R$, with each map $Y_r\to Y_s$ the identity on $\R$.  The inclusion $\{0\}\hookrightarrow \R$ induces an objectwise homotopy equivalence $X\hookrightarrow Y$, but $d_I(X,Y)=\infty$.  
More generally, it is easy to check that $d_I(X,Y)=\infty$ for any two $\R$-spaces $X$ and $Y$ with $\colim X$ not homeomorphic to $\colim Y$.  
\end{remark}

\subsection{Homotopy Interleavings}\label{Sec:HI}

We now introduce our homotopical generalization of interleavings.

\begin{definition}\label{def:HI}
For $\delta\geq 0$, we say $\R$-spaces $X$ and $Y$ are \emph{$\delta$-homotopy-interleaved} if there exist $\R$-spaces $X'\htp X$ and
$Y'\htp Y$ such that $X'$ and $Y'$ are $\delta$-\textup{interleaved}.
\end{definition}

\begin{definition}
The homotopy interleaving distance between $\R$-spaces $X$ and $Y$ is given by 
\[
d_{HI}(X,Y):=\inf\, \{\delta \mid X,Y\textup{ are
}\delta\textup{-homotopy-interleaved}\}.
\] 
\end{definition}

\begin{proof}[Partial Proof of \cref{Thm:Properties_of_dHI}] 
It is clear that $d_{HI}$ is symmetric and non-negative, and that for any $\R$-space $X$, $d_{HI}(X,X)=0$.  To establish that $d_{HI}$ is a distance, then, it suffices to check that $d_{HI}$ satisfies the triangle inequality; we verify this in \cref{Sec:Triangle_Ineq} below. 

It is easy to check that $d_{HI}$ is stable and homotopy invariant.  A weak equivalence between $\R$-spaces $X,Y$ induces a 0-interleaving between $H_iX, H_iY$, and as noted above, a $\delta$-interleaving between $X,Y$ induces a $\delta$-interleaving between $H_iX, H_iY$.    From these observations, the triangle inequality for $d_I$ on persistence modules, and \cref{Thm:IsometryTheorem}, we have that $d_{HI}$ is homology bounding. 
\end{proof}

\section{The Triangle Inequality for $d_{HI}$}\label{Sec:Triangle_Ineq}

In this section, we prove the triangle inequality for $d_{HI}$, thereby completing the proof of \cref{Thm:Properties_of_dHI}.  As noted in \cref{Sec:Properties}, our proof amounts to showing that some of the internal maps in a certain homotopy left Kan extension are weak homotopy equivalences.

\subsection{Generalized Interleaving Categories}
For our proof, it will be convenient to introduce a generalization of our definition of an interleaving category from \cref{Sec:Interleavings}.  
We define a \emph{marked category} to be a finite, thin category $\I$ equipped with a map $m \colon S\to [0,\infty)$, where $S$ is a subset of the set of unordered pairs of isomorphic objects in $\I$.  To simplify notation, we will write $m(\{a,b\})$ as $m(a,b)$.  We denote a pair $\{a,b\}\in S$ with $m(a,b)=\delta$ as follows:
\[
\begin{tikzcd}[column sep=5ex,row sep=2.4ex]
a \ar[bend left=20]{r} &b \ar[bend left=20,swap]{l}{\delta}
\end{tikzcd}
\]
Define $\Int{\I}$, the \emph{interleaving category} of the marked category $\I$, to be the thin category with $\obj \Int{\I}=\obj \I \times \R$ and
$\hom \Int{\I}$ generated by the set of arrows 
\begin{align*}
&\{(a,r)\to (b,r) \mid r\in \R,\ a\to b\in \hom(\I),\ \{a,b\}\not\in S\}\\
\union &\{(a,r)\to (b,r+m(a,b))\mid r\in \R,\ \{a,b\}\in S\}.
\end{align*}
Define a \emph{diagram of $\R$-spaces indexed by $\I$} to be a functor $F\colon \Int{\I}\to \Top$.  $F$ restricts to an $\R$-space $F_a$ for each $a\in \ob \I$, to a natural transformation $F_{a,b}\colon F_a\to F_b$ for each $a\to b\in \hom(\I)$ with $\{a,b\}\not \in S$, and to a $m(a,b)$-interleaving between $F_a$ and $F_b$ for each $\{a,b\}\in S$.  

\subsection{Proof of the Triangle Inequality}\label{Sec:Triangle_Ineq_Sub}

It suffices to show that if $W$ and $X$ are
$\delta$-homotopy-interleaved and $X$ and $Y$ are
$\epsilon$-homotopy-interleaved, then $W$ and $Y$ are $(\delta +
\epsilon)$-homotopy-interleaved.  Let $\A$ be the following marked category 
\[
\begin{tikzcd}[column sep=4ex,row sep=4ex]
                                                                    &                                        &\bullet\ar[swap]{dl}\ar{dr} &                                                                 &                                       \\
               \ab\ar[bend left=20]{r} &\bullet\ar[bend left=20,swap]{l}{\delta} &                                         &\bullet\ar[bend left=20]{r} &\bb\ar[bend left=20,swap]{l}{\epsilon},           
\end{tikzcd}
\]
where objects we don't (yet) wish to name explicitly are denoted by $\bullet.$  
If $W$ and $X$ are $\delta$-homotopy-interleaved and $X$ and $Y$ are $\epsilon$-homotopy-interleaved, then using the fact that $\htp$ is an equivalence relation on $\R$-spaces, there exists a diagram $F$ of $\R$-spaces indexed by $\A$ such that $F_\ab\htp W$, $F_\bb\htp Y$, and the two diagonal arrows are weak equivalences.  
By taking a cofibrant replacement of $F$, we may assume that $F$ is cofibrant.

Let $\Bb$ be the following marked extension of $\A$:
\[
\begin{tikzcd}[column sep=4ex,row sep=4ex]
 & & \bullet\ar[swap]{dl}\ar{dr} & & \\
 \ab\ar[bend left=20]{r}\ar[swap,dashed]{dr} &\bullet\ar[bend left=20,swap]{l}{\delta}\ar[dashed]{dr} & &\bullet\ar[bend left=20]{r}\ar[dashed]{dl} &\bb\ar[bend left=20,swap]{l}{\epsilon}\ar[dashed]{dl}\\
{} &\cb\ar[ bend left=20,dashed]{r} &\bullet\ar[bend left=20,dashed,swap]{l}{\delta} \ar[bend left=20,dashed]{r}&\db\ar[bend left=20,dashed,swap]{l}{\epsilon} 
\end{tikzcd}
\]
and let $\iota\colon \A\hookrightarrow \Bb$ denote the inclusion functor.  The inclusion $\iota$ induces an inclusion functor $\Int{\iota}\colon \Int{\A}\hookrightarrow \Int{\Bb}$.  

In what follows, we adopt the convention that for $G:\C\to \D$ an inclusion of categories, we write a left Kan extension $\Lan_{G}F$ as $\Lan_\D F$.  Moreover, for $d\in \ob \D$, we will write $\C\downarrow d:=G\downarrow d$ and $d\downarrow \C:=d\downarrow G$.

Since we assume $F$ to be cofibrant, $\Lan_{\Int{\Bb}} F$ computes the homotopy left Kan extension of $F$ along $\Int{\iota}$.  Since $\Int{\iota}$ is fully faithful, we have that $\Lan_{\Int{\Bb}} F\circ \Int{\iota} \cong F$ \cite[Corollary X.3.3]{mac1998categories}.  Therefore, writing $L:=\Lan_{\Int \Bb} F$, to establish that $W$ and $Y$ are $(\delta+\epsilon)$-homotopy-interleaved, it suffices to prove the following proposition; the desired $(\delta+\epsilon)$-homotopy-interleaving is then given by composition.   

\begin{proposition}\label{Prop:Morphisms_are_WEs}
The morphisms of $\R$-spaces $L_{\ab,\cb}\colon L_\ab\to L_\cb$ and $L_{\bb,\db}\colon L_\bb\to L_\db$ are weak equivalences.
\end{proposition}

\begin{proof}
We will show that $L_{\ab,\cb}$ is a weak equivalence; by symmetry, the argument for $L_{\bb,\db}$ is the same.  Let $\Bb'$ denote the marked category:
\[
\begin{tikzcd}[column sep=4ex,row sep=4ex]
 & & \eb \ar[swap]{dl}\ar{dr} & & & \\
 \ab\ar[bend left=20]{r}
&\fb\ar[bend left=20,swap]{l}{\delta}\ar{dr} & &\gb\ar[bend
  left=20]{r}\ar{dl} &\bb\ar[bend left=20,swap]{l}{\epsilon} \\
 & &\hb &
\end{tikzcd}
\]
and let $\Bb''$ denote the marked category:
\[
\begin{tikzcd}[column sep=4ex,row sep=4ex]
& & \eb \ar[swap]{dl}\ar{dr} & & & \\
 \ab \ar[bend left=20]{r}\ar[swap]{dr}
&\fb \ar[bend left=20,swap]{l}{\delta}\ar{dr} & &\gb \ar[bend
  left=20]{r}\ar{dl} &\bb \ar[bend left=20,swap]{l}{\epsilon} \\
 &\cb \ar[ bend left=20]{r} &\hb \ar[bend
  left=20,swap]{l}{\delta} & \\
\end{tikzcd}
\]
Note that we have inclusions $\A\hookrightarrow \Bb'\hookrightarrow \Bb'' \hookrightarrow \Bb$ factoring $\iota\colon \A\hookrightarrow \Bb$; these induce inclusions $\Int \A\xhookrightarrow{\iota'} \Int \Bb' \hookrightarrow \Int \Bb'' \hookrightarrow \Int \Bb$ factoring $\Int \iota \colon \Int \A\hookrightarrow \Int \Bb$.  By universality, we obtain the following diagram of left Kan extensions, commuting up to natural isomorphism:
\[
\begin{tikzcd}[column sep=40ex,row sep=4ex]
\Int \A \ar{r}{F}\ar[hookrightarrow]{d} & \Top \\
\Int \Bb' \ar[hookrightarrow]{dd}\ar[dashed]{ur}{L_{\Int \Bb'}F} \\
{}\\ 
\Int \Bb'' \ar[hookrightarrow]{d}\ar[dashed]{uuur}{L_{\Int \Bb''}L_{\Int \Bb'}F}\\
\Int \Bb \ar[dashed,swap]{uuuur}{L_{\Int \Bb} F}.
\end{tikzcd}
\]
In particular, we have $L\cong  L_{\Int \Bb}L_{\Int \Bb''}L_{\Int \Bb'}F$.  

To show that $L_{\ab,\cb}$ is a weak equivalence, we first show that $L_{\fb,\hb} \cong (L_{\Int \Bb'}F)_{\fb,\hb}$ is an objectwise acyclic cofibration.  
The key step in the argument is to show that for each $r\in \R$, the restriction of $L':=L_{\Int \Bb'} F$ to the full subcategory of $\Int \Bb'$ with the four objects \[\{(\eb,r),(\fb,r),(\gb,r),(\hb,r)\}\] is a pushout square.  

Let $\K:=\Int \A\downarrow (\hb,r)$.  By \cref{rem:Comma_Category_Special_Case}, we may identify $\K$ with the full subcategory of $\Int \A$ consisting of all objects in $\Int\A$ with a morphism to $(\hb,r)$.  Let $\hat \K$ be the full subcategory of $\J'$ with object set $ \ob(\K)\union \{(\hb,r)\}$.  Using the pointwise formula for left Kan extensions (\cref{Eq:LKE_Pointwise}), it is readily checked that the restriction of $L'$ to $\hat \K$ is naturally isomorphic to the colimit cocone of $F|_{\K}$.

A category is said to be \emph{connected} if it is non-empty and any two objects are connected by a finite zigzag of morphisms.  Note that \[\K':=(\fb,r)\leftarrow(\eb,r)\rightarrow (\gb,r)\] is a \emph{final subcategory} of $\K$, i.e., for each $d\in \ob \K$, the comma category $d\downarrow \K'$ is connected. Hence, a standard result about the preservation of colimits under final functors \cite[Section 8.3]{riehl} guarantees that the natural map $\colim F|_{\K'}\to \colim F|_\K$ is an isomorphism.  By construction, this isomorphism commutes with the maps in the colimit cocones.  It follows that the square in question is a pushout, as claimed.  

Since $F$ is cofibrant, the map $F_{(\eb,r),(\gb,r)}$ is a cofibration; by assumption it is in fact an acyclic cofibration.  Thus, \cref{Prop:Cobase_Change}\,(ii) implies that $L'_{(\fb,r),(\hb,r)}$ is an acyclic cofibration as well.  This holds for all $r$, so $L'_{\fb,\hb}$ is an objectwise acyclic cofibration.

Essentially the same argument applied to $L_{\Int \Bb''}L_{\Int \Bb'}F$ shows that for any $r\in \R$, the restriction of $L_{\Int \Bb''}L_{\Int \Bb'}F$ to the full subcategory of $\Int \Bb''$ with the four objects \[\{(\ab,r),(\fb,r-\delta),(\cb,r),(\hb,r-\delta)\}\] is a pushout square.  Then as above, since $L'_{\fb,\hb}$ is an objectwise acyclic cofibration, so is $L_{\ab,\cb} \cong (L_{\Int \Bb''}L_{\Int \Bb'}F)_{\ab,\cb}$.
\end{proof}

\begin{remark}
In the proof of \cref{Prop:Morphisms_are_WEs}, we have not used the full strength
of the hypothesis that $F$ is cofibrant, only that each internal map of $F$ is a cofibration.  However, the assumption that $F$ is cofibrant allows us to interpret $\Lan_{\Int \Bb} F$ as a homotopy Kan extension.
\end{remark}

\section{Universality of the Homotopy Interleaving Distance}\label{Sec:Universality}

In this section, we prove our universality result for $d_{HI}$,
\cref{Thm:Universality}. 

\subsection{Structure of Cofibrant Diagrams}\label{Sec:Concrete_Model}

To establish the universality of $d_{HI}$, we analyze the structure of
cofibrant objects in the projective model structure on diagrams of spaces indexed by directed sets.

For any small category $\I$, we say a functor $X\colon \I\to \Top$ is
a \emph{closed filtration} if each of the internal maps $X_{a,b}\colon
X_a\to X_b$ is a closed inclusion.  

\begin{proposition}\label{Lem:CofibrantReplacementsAreFiltrations}
Any cofibrant diagram in $\Top^\I$ is a closed filtration.  
\end{proposition}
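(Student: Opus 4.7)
My plan is to realize $\Q X_c$ as the geometric realization of a simplicial topological space so that, as $c$ varies, these realizations sit inside each other as realizations of sub-simplicial spaces with levelwise clopen (hence closed) inclusions; the closed inclusion on realizations then follows from the skeletal presentation recalled in \cref{Rem:Coequalizers}.

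Concretely, I would first rewrite the Bousfield--Kan coequalizer formula of \cref{Sec:Hocolims_And_LKEs} in the standard two-sided bar-construction form
\[
\Q X_c \cong |B^c_\bullet|, \qquad B^c_n := \coprod_{\sigma \in N(\I\downarrow c)_n} qX_{\pi^c(\sigma(0))},
\]
with the usual bar simplicial structure induced by the nerve face/degeneracy maps and by $qX$ applied to the first map of each chain. Specializing to $\I$ a poset---the setting of all later applications of this result, where $\I = \R$ or an interleaving category---an $n$-simplex of $N(\I\downarrow c)$ is just a weakly increasing chain $a_0 \le \cdots \le a_n$ in $\I$ with $a_n \le c$. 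For $a \le c$, the internal map $\Q X_{a,c}$ is induced by the sub-poset inclusion $\I\downarrow a \subseteq \I\downarrow c$, which identifies $N(\I\downarrow a)_n$ with the subset of chains in $N(\I\downarrow c)_n$ satisfying the stronger condition $a_n \le a$. This subset is preserved under face maps (which can only weaken the top entry) and degeneracy maps (which preserve it), so $B^a_\bullet \hookrightarrow B^c_\bullet$ is a sub-simplicial-space inclusion whose level maps are inclusions of clopen sub-coproducts, in particular closed cofibrations.

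The final step is to promote this levelwise closed inclusion of simplicial spaces to a closed inclusion of geometric realizations. Using the skeletal description of \cref{Rem:Coequalizers}, each $|B^c_\bullet|$ is obtained as a sequential colimit of pushouts along closed inclusions. Because $B^a_n \hookrightarrow B^c_n$ is clopen at every level, the cells attached at each skeletal stage of $|B^a_\bullet|$ form a clopen summand of those attached at the corresponding stage of $|B^c_\bullet|$, so each skeleton of $|B^a_\bullet|$ sits as a closed subspace inside the corresponding skeleton of $|B^c_\bullet|$. Stability of closed inclusions under pushouts and sequential colimits in the CGWH category (\cite[Proposition 2.35 and Lemma 3.3]{strickland2009category}, as invoked in \cref{Rem:Coequalizers}) then yields the desired closed inclusion $\Q X_a \hookrightarrow \Q X_c$. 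The main obstacle is precisely this last step: one must confirm that the simplicial identifications in $\Q X_c$ do not collapse the image of $\Q X_a$, a concern that the clopenness of the level inclusions is exactly designed to rule out.
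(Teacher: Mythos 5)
Your proof takes a genuinely different route from the paper's. The paper proves this in two lines: \cref{Prop:Cofibrant_Replacement} shows $\Q X$ is projectively cofibrant, and it was already observed in \cref{Sec:Homotopy_Theory_Prelims} that internal maps of a projectively cofibrant diagram in $\Top$ are cofibrations, hence closed inclusions (via~\cite{hovey}). You instead bypass the model-categorical input entirely and argue directly with the Bousfield--Kan/bar-construction formula for $\Hocolim$, realizing each $\Q X_c$ as $|B^c_\bullet|$ and showing the inclusion $B^a_\bullet \hookrightarrow B^c_\bullet$ is a sub-simplicial-space inclusion whose levels are clopen summand inclusions. What this buys you is concreteness: the argument is self-contained given \cref{Rem:Coequalizers}, and the combinatorics of which chains $a_0 \le \cdots \le a_n$ land in $\I\downarrow a$ versus $\I\downarrow c$ is transparent. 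What the paper's route buys is brevity and generality: it works for arbitrary small $\I$, with no bar-construction bookkeeping, once \cref{Prop:Cofibrant_Replacement} is in hand.

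Two caveats worth flagging. First, you restrict to $\I$ a poset; this covers every application of the proposition in the paper (it is only invoked for directed sets), but it does narrow the stated generality. Second, the step you yourself identify as the main obstacle --- promoting the levelwise clopen inclusion to a closed inclusion of realizations --- is genuinely the delicate point, and ``clopenness rules it out'' is a bit quick. What one really needs is that $B^a_\bullet$ is closed under all faces and degeneracies of $B^c_\bullet$ and that the latching objects interact well: $L_n B^a = B^a_n \cap L_n B^c$ and both are clopen, so that the non-degenerate $n$-cells of $B^a_\bullet$ form a clopen summand of those of $B^c_\bullet$. These do hold here (degenerate chains are detected by a repeated consecutive entry, compatibly with the restriction on the top entry), but the claim deserves to be spelled out rather than delegated to the word ``clopen,'' since for a general levelwise closed inclusion of simplicial spaces the realization need not be a closed inclusion. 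With that detail filled in, the argument is sound; it is just considerably longer than the paper's.
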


\begin{proof}
As noted in \cref{Ex:Projective_Model}, each internal map in a
cofibrant diagram in $\Top$ is a cofibration, and hence a closed
inclusion~\cite[Lemma 2.4.6, Theorem 2.4.23, Lemma 2.4.25]{hovey}.
\end{proof}

\begin{lemma}[{\cite[Corollary 2.23, Proposition 2.35]{strickland2009category}}]\label{Lem:PushoutLemma}
Given a pushout square
\[
\begin{tikzcd}
  A \ar[]{r} \ar["f",swap]{d} & X\ar["g"]{d} \\
  B \ar[]{r} & Y,
\end{tikzcd}
\]
in the category $\Top$ of CGWH spaces with $f$ a closed inclusion, the square is also a pushout in the category of all topological spaces and $g$ is also a closed inclusion.
\end{lemma} 

We define a \emph{directed set} to be a non-empty poset $\I$ such that for all $a,b\in \I$,
there exists $c\in \I$ with $a,b\leq c$.  Given a directed set $\I$, a
functor $X\colon \I\to \Top$, and $a\in \I$, let
$\maptocolim{X}{a}\colon X_a\to \colim X$ denote the canonical map.

\begin{lemma}[{\cite[Corollary 2.23, Lemma 3.3]{strickland2009category}}]\label{Lem:ColimitEmbeddingLemma}\label{Cor:CoconesOfCofReplacements}  
If $\I$ is a directed set and $X\colon \I\to \Top$ is a closed
filtration, then $\colim X$ is the colimit in the category of all
topological spaces, and each map $\maptocolim{X}{a}$ is a closed inclusion.
\end{lemma} 

Adapting terminology introduced in \cite{carlsson2010computing}, for
$\I$ a directed set, we say a functor $X\colon \I \to \Top$ is
\emph{1-critical} if $X$ is a closed filtration and for each $x\in
\colim X$, the set  
\[
\{a\in \I \mid x\in \im \maptocolim{X}{a}\}
\] 
has a minimum element.  We then have a function $\fc^X\colon \colim
X\to \I$ sending each $x\in \colim X$ to this minimum element.

\begin{proposition}\label{Prop:CofReplacementIs1Critical}
For any directed set $\I$, each cofibrant diagram in $\Top^\I$ is 1-critical.
\end{proposition}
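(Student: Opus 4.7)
The plan is to give each point $p \in \colim \Q X$ a canonical ``non-degenerate'' representative from which the minimum index $\fc^{\Q X}(p)$ can be read off directly.  The first step is to identify $\colim \Q X$ with $\Hocolim_\I qX$.  Since $\I$ is directed, $\I = \bigcup_c (\I \downarrow c)$, and the Bousfield--Kan formula \eqref{Eq:Hocolim} is built from coproducts and coequalizers, which commute with filtered colimits of spaces.  Using $b \downarrow (\I \downarrow c) = [b,c]$ and $\colim_{c\geq b}\N[b,c] = \N(b \downarrow \I)$, this yields a natural homeomorphism $\colim_c \Q X_c \cong \Hocolim_\I qX$ compatible with the cocone maps.  \cref{Rem:Coequalizers} ensures these coequalizers agree with those taken in the category of all topological spaces, so the colimit in $\Top$ coincides with the one computed above.

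Next I would invoke the standard description of $\Hocolim_\I qX$ as the geometric realization of a simplicial space whose $n$-simplices are $\coprod_{\sigma} qX_{\sigma(0)}$, with the coproduct running over $n$-chains $\sigma = (a_0 \leq \cdots \leq a_n)$ in $\I$.  Since $\I$ is a poset, the non-degenerate chains are precisely the strictly increasing ones.  Each $qX_a$ is cofibrant, so the degeneracies split off cofibrant summands and the simplicial space is proper; standard realization theory then gives every point of $\Hocolim_\I qX$ a unique non-degenerate representative $(a_0 < \cdots < a_n,\, \hat t,\, x)$ with $\hat t$ in the open $n$-simplex and $x \in qX_{a_0}$.

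Finally, under the identification of the first paragraph, the cocone map $\maptocolim{\Q X}{c}$ corresponds to the inclusion of the sub-realization built from chains inside $\I \downarrow c$, which picks out exactly those points whose non-degenerate chain satisfies $a_n \leq c$.  By \cref{Lem:CofibrantReplacementsAreFiltrations} and \cref{Sec:ColimitEmbeddingLemma}, each $\maptocolim{\Q X}{c}$ is a closed inclusion, so the membership question $p \in \Q X_c$ is unambiguous.  The set $\{c \in \I \mid a_n \leq c\}$ has minimum $a_n$, so setting $\fc^{\Q X}(p) := a_n$ completes the proof.  The main obstacle is establishing existence and uniqueness of the non-degenerate representative, i.e., properness of the simplicial space underlying $\Hocolim_\I qX$; here one must take care to work in the CGWH setting, but cofibrancy of each $qX_a$ is just enough to invoke the standard machinery.
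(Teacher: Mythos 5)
Your proof is correct and follows essentially the same route as the paper's: identify $\colim \Q X$ with the Bousfield--Kan realization $\Hocolim_\I qX$ and read off $\fc^{\Q X}(p)$ as the top index of the unique non-degenerate simplex representing $p$. The only presentational difference is that the paper first projects to the classifying space $\N(\I)$ via the map $\alpha$ that collapses each $qX_a$ factor, so that non-degeneracy is invoked only for the simplicial \emph{set} $\N(\I)$, whereas you invoke it directly for the simplicial space $[n]\mapsto\coprod_\sigma qX_{\sigma(0)}$; both are fine, but your stated worry about properness is a red herring, since unique non-degenerate representation is a set-level Eilenberg--Zilber fact valid for every simplicial space (the degeneracies are split injections on underlying sets), and properness/cofibrancy of the $qX_a$ govern only the homotopy type of the realization, not its point-set description.
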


\begin{proof}
Recall that the projective model structure on $\Top^\I$ is compactly generated, where the generating cofibrations are free diagrams on  the inclusions $S^{n-1}\hookrightarrow D^n$.  Thus, by \cref{prop:cell-complex}, the cofibrant diagrams $Y$ in $\Top^\I$ are exactly the retracts of diagrams $\colim Z$, where $Z:\mathbb N\to \Top^{\I}$ is given by taking $Z_0=\emptyset$ and each $Z_{i+1}$ is a pushout of $Z_i$ along of a product of generating cofibrations.  

To show that such $Y$ is 1-critical, we first show that each $Z_i$ is 1-critical, and then that $\colim Z$ is 1-critical.  Note that the diagrams $Z_i$, $\colim Z$, and $Y$ are all cofibrant.  Since any cofibrant diagram $D$ is a closed filtration by \cref{Lem:CofibrantReplacementsAreFiltrations}, checking 1-criticality of $D$ amounts to checking that every element of $\colim D$ has a minimum birth index.  

Let us write $W_i:=\colim Z_i$.  For each $i\geq 0$, we have an induced map $w_i:W_i\to W_{i+1}$, and these define a functor $W:\mathbb N\to \Top$.  Each $w_i$ is a pushout along a closed inclusion, hence by \cref{Lem:PushoutLemma}, $w_i$ is itself a closed inclusion.  Thus, $W$ is a closed filtration.  Any closed filtration indexed by $\mathbb N$ is 1-critical, so $W$ is 1-critical.

We show that each $Z_i$ is 1-critical by induction: Clearly $Z_0$ is 1-critical.  Since any free diagram in $\Top^\I$ is 1-critical, it follows from \cref{Lem:PushoutLemma} that if $Z_i$ is 1-critical, then $Z_{i+1}$ is 1-critical.  Moreover, for any $x\in \colim Z_{i}$, we have
\begin{equation}\label{eq:birth_doesn't_change}
\fc^{Z_i}(x)=\fc^{Z_{i+1}}(w_i(x)).
\end{equation}

To see that $\colim Z$ is 1-critical, let $x\in \colim \colim Z$.  For any small category $\C$ and cocomplete category $\D$, the functor $\colim: \D^\C\to \D$ is a left adjoint and hence preserves colimits \cite[Proposition 4.5.1, Theorem 4.5.3]{riehl2017category}.  Taking $\C=\mathbb N$ and $\D=\Top^\I$, this implies that $\colim \colim Z=\colim W$.  Let $j=\fc^{W}(x)$.  By \cref{Lem:ColimitEmbeddingLemma}, the map $\mu^{W}_j:W_j\to \colim W$ is an injection; let $y=(\mu^{W}_j)^{-1}(x)$.  Using \cref{eq:birth_doesn't_change}, it is easily checked that \[\fc^{Z_j}(y)=\min  
\,\{a\in \I \mid x\in \im \maptocolim{\colim Z}{a}\}.
\] 
Since $x$ was chosen arbitrarily, this implies that $Z$ is 1-critical.  

Finally, using the fact that the colimit of a retract of diagrams is a retract, it is straightforward to check that the retract of a 1-critical diagram is 1-critical.  Thus, the 1-criticality of $Z$ implies the 1-criticality of $Y$.  \qedhere
\end{proof}

\begin{remark}
An analogue of \cref{Prop:CofReplacementIs1Critical} for simplicial sets is given in \cite[Proposition 4.5]{lanari2020rectification}.
\end{remark}

We next define a category $\Funs$ whose objects are functions $\gamma_T\colon T\to \I$ such that $T\in \ob \Top$.  We take $\hom_{\Funs}(\gamma_S,\gamma_T)$ to be the set of continuous functions $f\colon S\to T$ such that $\gamma_T\circ f\leq \gamma_S$.  We emphasize that when $\I$ happens to carry a topology (e.g., when $\I=\R$), we do not require $\gamma_T\in \ob \Funs$ to be continuous, but we do require morphisms in $\Funs$ to be continuous. 

Let $\Top^{\I}_{\crit}$ denote the full subcategory of $\Top^{\I}$ whose objects are the 1-critical diagrams.  The functoriality of colimits tells us that for diagrams $X,Y\colon \I\to \Top$, a natural transformation $f\colon X\to Y$ induces a map \[\colim f\colon \colim X\to \colim Y.\]   If $X,Y$ are 1-critical, then $\fc^{Y}\circ \colim f \leq \fc^{X}$.  We thus have a functor \[\acolim\colon \Top^{\I}_{\crit}\to \Funs\] which sends each 1-critical diagram $X$ to $\fc^X\colon \colim X\to \I$.  

We also have an obvious functor $\Sb\colon \Funs\to \Top^\I$ with 
\[\Sb(\gamma_T)_a:=\{y\in T\mid \gamma_T(y)\leq a\}.\]  This generalizes the sublevelset filtration construction introduced in \cref{Sec:Intro_Homotopy_Interleavings}.

\begin{proposition}\label{Prop:ImportantEquivalence}
If $\I$ is a directed set, then $\Sb \comp \acolim\cong \id_{\Top^{\I}_{\crit}}$.  
\end{proposition}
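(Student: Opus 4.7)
The plan is to construct a natural isomorphism $\eta\colon \id_{\Top^{\I}_{\crit}} \to \Sb \comp \acolim$ whose component at a 1-critical diagram $X$ is the natural transformation $\eta_X\colon X \to \Sb(\acolim X)$ defined at index $a \in \I$ by the canonical map $\maptocolim{X}{a}\colon X_a \to \colim X$, with codomain corestricted to the subspace $\Sb(\acolim X)_a \subseteq \colim X$. This makes sense only once one knows that the image of this corestriction is a homeomorphism onto $\Sb(\acolim X)_a$, so the first task is an explicit unpacking of the right-hand side.

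By the definitions of $\acolim$ and $\Sb$,
\[
\Sb(\acolim X)_a = \{x \in \colim X \mid \fc^X(x) \leq a\}.
\]
Using that $\fc^X(x)$ is by definition the minimum of $\{c \in \I \mid x \in \im \maptocolim{X}{c}\}$, together with the cocone identity $\maptocolim{X}{a} \circ X_{c,a} = \maptocolim{X}{c}$ for $c \leq a$, one sees that $\fc^X(x) \leq a$ is equivalent to $x \in \im \maptocolim{X}{a}$, yielding the clean identification $\Sb(\acolim X)_a = \im \maptocolim{X}{a}$ as subsets of $\colim X$.

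Since $X$ is 1-critical, it is a closed filtration on a directed set, so \cref{Sec:ColimitEmbeddingLemma} applies: each $\maptocolim{X}{a}$ is a closed inclusion, and hence its corestriction to its image is a homeomorphism $\eta_X(a)\colon X_a \to \Sb(\acolim X)_a$. Naturality of $\eta_X$ in $a$ is immediate from the cocone identity, since for $a \leq b$ the internal map $\Sb(\acolim X)_a \hookrightarrow \Sb(\acolim X)_b$ is just subspace inclusion, and $\maptocolim{X}{b} \circ X_{a,b} = \maptocolim{X}{a}$ makes the relevant square commute on the nose. Thus $\eta_X$ is an isomorphism of $\I$-indexed diagrams.

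Finally, for naturality of $\eta$ in $X$, given a morphism $f\colon X \to Y$ in $\Top^{\I}_{\crit}$, the square at index $a$ reduces to the equality $(\colim f) \circ \maptocolim{X}{a} = \maptocolim{Y}{a} \circ f_a$, which is precisely the defining property of $\colim f$ as the map induced on colimits. The main obstacle is really just bookkeeping: one must take care that $\colim X$ computed in $\Top$ agrees set-theoretically with the colimit taken in all topological spaces, so that the images $\im \maptocolim{X}{a}$ can be manipulated naively as subsets and the preceding identifications are genuine equalities; this is exactly what \cref{Sec:ColimitEmbeddingLemma} delivers from the closed-filtration hypothesis built into 1-criticality.
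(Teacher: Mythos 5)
Your proof is correct and follows essentially the same route as the paper's: both identify $\Sb(\acolim X)_a = \im \maptocolim{X}{a}$, invoke the closed-filtration lemma to see each $\maptocolim{X}{a}$ is a homeomorphism onto its image, and verify naturality (in $a$ and in $X$) via the cocone identities. You spell out the identification $\{x \mid \fc^X(x)\leq a\}=\im\maptocolim{X}{a}$ a bit more explicitly than the paper does, but the argument is the same.
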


\begin{proof}
Consider a diagram $X\in \ob \Top^\I_{\crit}$.  For $a\in \I$, \[({\Sb}\circ \acolim X)_a=\im \maptocolim{X}{a}.\]  By \cref{Cor:CoconesOfCofReplacements}, $\maptocolim{X}{a}$ is a homeomorphism onto its image.  For $a\leq b\in \I$ we have $\maptocolim{X}{b}\circ X_{a,b}=\maptocolim{X}{a}$ so these homeomorphisms define a natural isomorphism $\mu^X\colon  X\to {\Sb}\circ \acolim X$.  Further, the natural isomorphisms $\{\mu^X\}_{X\in \Top^{\I}_{\crit}}$ are natural in $X$, so this collection assembles into a natural isomorphism $\id_{\Top^{\I}_{\crit}}\to {\Sb}\circ \acolim$.
\end{proof}

\subsection{Proof of Universality}\label{Sec:Universality_Sub}
The main step in our proof that $d_{HI}$ is universal is the following:
\begin{proposition}\label{Prop:LiftToFunctions}
For any $\delta$-interleaved $\R$-spaces $X$, $Y$, there exists a topological space $T$ and functions
$\gamma^X,\gamma^Y\colon  T\to \R$ such that $\Sb(\gamma^X)\htp X$, $\Sb(\gamma^Y)\htp Y$, and $d_\infty(\gamma^X,\gamma^Y)\leq \delta$.
\end{proposition}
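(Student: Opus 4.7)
The plan is to apply the cofibrant replacement functor $\Q$ of \cref{Sec:Concrete_Model} directly to the $\delta$-interleaving $Z\colon\I^\delta\to\Top$ realizing the hypothesis, and to read off $T,\gamma^X,\gamma^Y$ from the resulting $1$-critical diagram via the identification of \cref{Prop:ImportantEquivalence}. For $\delta>0$, I would first verify that $\I^\delta$ is a directed poset: reflexivity and transitivity are immediate, antisymmetry rules out mutual morphisms between $(r,0)$ and $(s,1)$ because $\delta>0$, and $(\max(r,s)+\delta,0)$ serves as a common upper bound for any two objects. (The degenerate case $\delta=0$ can be handled separately: a $0$-interleaving is a natural isomorphism $X\cong Y$, and one takes $T:=\colim\Q X$ with $\gamma^X:=\gamma^Y:=\fc^{\Q X}$.)

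Set $\widetilde Z:=\Q Z$. By \cref{Prop:Cofibrant_Replacement} and \cref{Prop:CofReplacementIs1Critical}, $\widetilde Z$ is a cofibrant, $1$-critical diagram over $\I^\delta$, and the canonical map $\widetilde Z\to Z$ is an objectwise weak equivalence. Let $T:=\colim\widetilde Z$ and $\fc:=\fc^{\widetilde Z}\colon T\to\I^\delta$. For $x\in T$ with $\fc(x)=(r_0,i_0)\in\R\times\{0,1\}$, I would define
\[
\gamma^X(x):=r_0+i_0\delta, \qquad \gamma^Y(x):=r_0+(1-i_0)\delta.
\]
Since $|\gamma^X(x)-\gamma^Y(x)|=\delta$ for every $x\in T$, the bound $d_\infty(\gamma^X,\gamma^Y)\leq\delta$ is immediate.

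The key identification to check is that $\Sb(\gamma^X)\cong\widetilde Z\circ E^0$ (and symmetrically for $\gamma^Y$). Unpacking the definition of $\I^\delta$, one has $(s,j)\leq(r,0)$ iff either $j=0$ and $s\leq r$, or $j=1$ and $s+\delta\leq r$, which, applied to $(s,j)=\fc(x)$, is precisely the condition $\gamma^X(x)\leq r$. Hence
\[
\Sb(\gamma^X)_r \;=\; \{x\in T\mid \gamma^X(x)\leq r\} \;=\; \Sb(\fc)_{(r,0)}
\]
as subspaces of $T$, and the internal inclusions match. By \cref{Prop:ImportantEquivalence}, $\Sb(\fc)\cong\widetilde Z$, so restricting both sides along $E^0$ yields $\Sb(\gamma^X)\cong\widetilde Z\circ E^0$. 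Restricting the objectwise weak equivalence $\widetilde Z\to Z$ along $E^0$ gives an objectwise weak equivalence $\widetilde Z\circ E^0\to Z\circ E^0=X$, so $\widetilde Z\circ E^0\htp X$, and combining with the isomorphism $\Sb(\gamma^X)\cong\widetilde Z\circ E^0$ yields $\Sb(\gamma^X)\htp X$; the argument for $\gamma^Y$ is identical.

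I expect the essential difficulty to be conceptual rather than technical: the observation to organize the proof around is that a cofibrant replacement of an interleaving already packages the data of an approximate weak equivalence into a single filtration structure over $\I^\delta$, from which the two sublevelset descriptions fall out by symmetry under $E^0\leftrightarrow E^1$. All of the substantive topological work is bundled into the $1$-criticality machinery of \cref{Sec:Concrete_Model}, which applies verbatim to $\I^\delta$ once its poset structure is verified; the rest reduces to a bookkeeping check about the order relation in $\I^\delta$.
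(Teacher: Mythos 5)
Your proposal is correct and takes essentially the same route as the paper: cofibrantly replace the interleaving diagram $Z$, observe that $\I^\delta$ (for $\delta>0$) is a directed poset so that \cref{Prop:CofReplacementIs1Critical} and \cref{Prop:ImportantEquivalence} apply, read off $T$ and the two functions from $\fc^{\Q Z}$, and handle $\delta=0$ separately. The only (minor) organizational difference is that you apply \cref{Prop:ImportantEquivalence} once to the $\I^\delta$-indexed diagram $\Q Z$ and then restrict along $E^0,E^1$, whereas the paper first checks that the restrictions $\Q Z\circ E^i$ are themselves 1-critical $\R$-filtrations (using finality of $E^i$ to identify their colimits with $T$) and then applies \cref{Prop:ImportantEquivalence} to each; the two bookkeeping routes are equivalent.
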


\begin{proof}
For $\I$ a small category, let $\Q$ denote a cofibrant replacement
functor in the projective model structure on $\Top^\I$.

It will be convenient for us to treat the cases $\delta=0$ and $\delta>0$ separately.  First, let $\delta=0$, so that we have an isomorphism $X\to Y$.  We take $T=\colim \Q X$.  By \cref{Prop:CofReplacementIs1Critical}, $\Q X$ is 1-critical.  We let $\gamma^X=\gamma^Y=\fc^{\Q X}$.  Since $\R$, together with its total order, is a directed set, by \cref{Prop:CofReplacementIs1Critical} and \cref{Prop:ImportantEquivalence} imply that $\Sb(\gamma^Y)=\Sb(\gamma^X)\cong \Q X$.  We also have a weak equivalence $\Q X\to X$.  Composing, we thus obtain weak equivalences $\Sb(\gamma^X)\to X$, $\Sb(\gamma^Y)\to Y$, as desired.  This completes the proof in the case $\delta=0$.

Now assume $\delta>0$.  Recall the definitions of the interleaving category $\I^\delta$ and the functors $E^0,E^1\colon \RCat\to \I^\delta$ from \cref{Sec:Interleavings}, and note that when $\delta>0$, $\I^\delta$ is a poset category; in fact the underlying poset is a directed set.

Since $X$ and $Y$ are $\delta$-interleaved, there exists a functor $Z\colon \I^\delta\to \Top$ such that $Z\circ E^0=X$ and $Z\circ E^1=Y$.  We define $T:=\colim \Q Z$.  

%


$E^0$ and $E^1$ are both final functors \cite[Section 8.3]{riehl}.  Hence, we have canonical identifications of $\colim(\Q Z\circ E^0)$  and $\colim(\Q Z\circ E^1)$ with $T$ such that for each $r\in \R$,
\[\maptocolim{\Q Z}{(r,0)}=\maptocolim{\Q Z\circ E^0}{r},\qquad\maptocolim{\Q Z}{(r,1)}=\maptocolim{\Q Z\circ E^1}{r}.\]

We claim that $\Q Z\circ E^0$ and $\Q Z\circ E^1$ are each 1-critical.  We show this for $\Q Z\circ E^0$; the proof for $\Q Z\circ E^1$ is the same.  First note that $\Q Z$ is 1-critical by \cref{Prop:CofReplacementIs1Critical}.  In particular, for each $a\in \I^\delta$, $\maptocolim{\Q Z}{a}\colon Z_a\to T$ is a closed inclusion.  Therefore, for each $r\in \R$, $\maptocolim{\Q Z\circ E^0}{r}\colon (\Q Z\circ E^0)_r\to T$ is a closed inclusion.

Since $\Q Z$ is 1-critical, for each $z\in T$ there is a minimum element $(r,j)\in \ob \I^\delta$ such that $z\in \im \mu^{\Q Z}_{(r,j)}$.  We then have
\[r+j\delta=\min\, \{s\in \R \mid z\in \im \maptocolim{\Q Z\circ E^0}{s}\}.\]  
It follows that $\Q Z\circ E^0$ is 1-critical.

Since $\Q Z\circ E^0$ and $\Q Z\circ E^1$ are each 1-critical, we may define $\gamma^X,\gamma^Y\colon T\to \R$ by
\begin{align*}
\gamma^X&=\fc^{\Q Z\circ E^0},\\
\gamma^Y&=\fc^{\Q Z\circ E^1}.
\end{align*}
By \cref{Prop:ImportantEquivalence}, we have
\[\Sb(\gamma^X)\cong \Q Z\circ E^0,\qquad \Sb(\gamma^Y)\cong \Q Z\circ E^1.\]  By construction, there exists a weak equivalence $\Q Z\to Z$; restricting, we obtain  weak equivalences $\Q Z\circ E^0\to X$ and $\Q Z\circ E^1\to Y$.  Hence, there exist weak equivalences $\Sb(\gamma^X)\to X$ and $\Sb(\gamma^Y)\to Y$.

It remains to check that $d_\infty(\gamma^X,\gamma^Y )\leq \delta$.  Consider $z\in T=\colim \Q Z$.  There is a minimum index $(r,j)\in \ob \I^\delta$ such that 
$z\in \im \maptocolim{\Q Z}{(r,j)}$.  If $j=0$ then $\gamma^X(z)=r$ and $\gamma^Y(z)=r+\delta$.  If on the other hand $j=1$, then $\gamma^Y(z)=r$ and $\gamma^X(z)=r+\delta$.  Clearly, in either case, we have 
$\|\gamma^X(z)-\gamma^Y(z))\|_\infty=\delta$. 
Since this holds for all $z\in T$ we have $d_\infty(\gamma^X,\gamma^Y) \leq \delta$ as desired (with strict equality unless $\colim\Q Z=\emptyset$).
\end{proof}

\begin{proof}[Proof of \cref{Thm:Universality}]
Let $X$ and $Y$ be $\R$-spaces with $d_{HI}(X,Y)=\delta$.  Then for all $\delta' > \delta$, $X$ and $Y$ are $\delta'$-homotopy interleaved, i.e., there exist $\delta'$-interleaved $\R$-spaces $X'$, $Y'$ with $X'\htp X$ and $Y'\htp Y$.  \cref{Prop:LiftToFunctions} gives us a topological space $T$ and functions $\gamma^{X'},\gamma^{Y'}\colon T\to \R$ with $\Sb(\gamma^{X'})\htp X$, $\Sb(\gamma^{Y'})\htp Y$, and $d_\infty(\gamma^{X'},\gamma^{Y'})\leq \delta'$.  

Suppose $d$ is a stable, homotopy invariant distance on $\R$-spaces.  Then by stability, $d(\Sb(\gamma^{X'}),\Sb(\gamma^{Y'}))\leq \delta'$.  Therefore, by homotopy invariance and the triangle inequality for $d$, we have $d(X,Y)\leq \delta'$.  Since this holds for arbitrary $\delta'>\delta$ we have that $d(X,Y)\leq \delta=d_{HI}(X,Y)$.
\end{proof}


\section{Applications}\label{Sec:Applications}
In this section, we show that several fundamental TDA theorems can be formulated on the space level using the homotopy interleaving distance, or any distance satisfying our stability and homotopy invariance axioms.  Specifically, we prove \cref{Prop:Two_Axioms}, and present in detail the results mentioned in \cref{Sec:Other_Strengthenings}.

\subsection{Stability of Rips Filtrations}\label{Sec:Rips_Application}
\label{Rips_Stability_Refinement}

Recall that~\cref{Prop:Two_Axioms}, which strengthens the Rips stability theorem (\cref{Thm:RST}) to a purely homotopy-theoretic result,   
says the following: If $d$ is any distance on $\R$-spaces satisfying the
stability and homotopy invariance axioms of 
\cref{Def:Two_Axioms},
then for all metric spaces $P$ and $Q$, we have 
\[
d(\Rips(P),\Rips(Q))\leq d_{GH}(P,Q).
\]
We are aware of four different proofs of the Rips stability theorem.  The original proof of Chazal et al.~\cite{chazal2009gromov} isometrically embeds the metric spaces into a space of $\R$-valued functions with the sup-norm distance and applies the nerve theorem.   A proof of \cref{Prop:Two_Axioms} is already implicit in this proof.  A later proof appearing in work of Chazal et al. \cite{chazal2014persistence} avoids use of embeddings and the nerve theorem, and instead considers multi-valued maps between simplicial complexes.  An elegant third proof, due to M\'emoli~\cite{memoli2017distance}, relies on Quillen's Theorem A for simplicial complexes~\cite[Page 93]{quillen1973higher}.  A fourth proof, appearing in the Ph.D. thesis of Scoccola \cite{scoccola2020locally} and building on some of our ideas, also uses Quillen's Theorem A, but frames the argument 
in terms of an abstract principle about the preservation of generalized homotopy interleavings under certain enriched functors.

We will verify~\cref{Prop:Two_Axioms} by following M\'emoli's proof of the Rips stability theorem.  
We first review the definition of the Gromov-Hausdorff distance.  Given sets $S,T$, a \emph{correspondence} between $S$ and $T$ is a set $C\subset S\times T$ such that the coordinate projections $\proj_S\colon C\to S$ and $\proj_T\colon C\to T$ are surjections.  Let $\Gamma(S,T)$ denote the set of all correspondences between $S$ and $T$.  

\begin{definition}\label{Def:GH_Dist}
The Gromov-Hausdorff distance between metric spaces $P$ and $Q$ is given by
\[d_{GH}(P,Q)=\frac{1}{2}\, \inf_{C\in\Gamma(P,Q)}\ \ \sup_{(p,q),(p',q')\in C}\  |d_P(p,p')-d_Q(q,q')|.\]
This defines a distance $d_{GH}$ (i.e., extended pseudometric) on arbitrary metric space, which restricts to a genuine metric on compact metric spaces.  
\end{definition}

\begin{proof}[Proof of \cref{Prop:Two_Axioms}]
If $P$ and $Q$ are metric spaces and $\delta>d_{GH}(P,Q)$, then there exists a correspondence $C\subset P\times Q$ with $|d_P(p,p')-d_Q(q,q')|\leq 2\delta$ for all $((p,q),(p',q'))\in C$.

Let $[C]$ denote the maximal simplicial complex with vertices $C$, i.e., $[C]$ consists of all non-empty finite subsets of $C$.  We define a simplicial filtration $\F^P$ on $[C]$ by taking $\sigma\in \F^P_r$ if and only if
\[d_P(\proj_P(u),\proj_P(v))\leq 2r\] for all $u,v\in \sigma$. 

 $\F^P$ induces a function $\gamma^P\colon [C]\to \R$ which sends a simplex $\sigma$ to the minimum $r\in \R$ such that $\sigma\in \F^P_r$.   Note that $\F^P$ is equal to the simplicial sublevelset filtration $\Sb(\gamma^P)$.
Define a simplicial filtration $\F^Q$ and function $\gamma^Q\colon [C]\to \R$ analogously.

By the way we chose $C$, we have that  
\[
d_\infty(\gamma^P,\gamma^Q)\leq \delta.
\]
Thus, by the stability axiom for $d$ and the fact that 
\[\Sb(\gamma^P)= \F^P, \quad \Sb(\gamma^Q)= \F^Q,\]
we have that $d(\F^P,\F^Q)\leq \delta$. 

Quillen's theorem A for simplicial complexes \cite{quillen1973higher} says that if $f\colon S\to T$ is a simplicial map of simplicial complexes such that $f^{-1}(\sigma)$ is contractible for each (closed) simplex $\sigma \in T$, then $f$ is a homotopy equivalence.  Note that $\proj_P\colon C\to P$ induces a morphism $g\colon \F^P\to \Rips(P)$.  By Quillen's theorem A for simplicial complexes, $g$ is an objectwise homotopy equivalence.  Symmetrically, $\proj_Q\colon C\to Q$
induces an objectwise homotopy equivalence $\F^Q\to\Rips(Q)$.  Thus by the homotopy invariance of
$d$, \[d(\Rips(P),\F^P)=d(\Rips(Q),\F^Q)=0.\]

By the triangle inequality for $d$, we have \[d(\Rips(P),\Rips(Q))\leq \delta.\]
Since this holds for all $\delta>d_{GH}(P,Q)$, it follows that
\[d(\Rips(P),\Rips(Q))\leq d_{GH}(P,Q),\]
as desired.
\end{proof}

\subsection{Stability of Simplicial Filtrations}\label{Sec:Stabilty_Correspondences}
We next observe that the general stability result for simplicial filtrations appearing in \cite{chazal2014persistence} can also be cast in our axiomatic framework, by essentially the same argument as in the proof of \cref{Prop:Two_Axioms}.

\begin{definition}
Given simplicial filtrations $X,Y:\R\to \mathbf{Simp}$, let $P$ and $Q$ be the vertex sets of $\colim(X)$ and $\colim(Y)$ respectively.  A correspondence $C$ between $P$ and $Q$ is said to be \emph{$\delta$-simplicial} if the following hold for any $r\in \R$: 
\begin{enumerate}
\item For any simplex $\sigma\in X_r$, every finite non-empty subset of $\proj_{Q}(\proj_{P}^{-1}(\sigma))$ is a simplex in $X_{r+\delta}$,
\item For any simplex $\sigma\in Y_r$, every finite non-empty subset of $\proj_{P}(\proj_{Q}^{-1}(\sigma))$ is a simplex in $Y_{r+\delta}$,
\end{enumerate}
\end{definition}

\begin{example}[{\cite[Lemma 4.3]{chazal2014persistence}}]\label{Ex:Rips_Simplicial}
If $P$ and $Q$ are finite metric spaces, then for any $\delta>d_{GH}(P,Q)$ there exists a correspondence between $P$ and $Q$ such that $\sup_{(p,q),(p',q')\in C}\  |d_P(p,p')-d_Q(q,q')|\leq 2\delta$.  $C$ is a $\delta$-simplicial correspondence between $\Rips(P)$ and $\Rips(Q)$.  
\end{example}

The following is proven in \cite{chazal2014persistence} using the language of multi-valued maps and a contiguity argument.

\begin{proposition}[{\cite[Proposition 4.2]{chazal2014persistence}}]\label{Prop:Chazal_Multi_Valued} If $C$ is $\delta$-simplicial, then for each $i\geq 0$, $H_i(X)$ and $H_i(Y)$ are $\delta$-interleaved.  
\end{proposition}

A slight variant of our proof of \cref{Prop:Two_Axioms} establishes the following filtration-level formulation of \cref{Prop:Chazal_Multi_Valued}.

\begin{proposition}\label{Prop:Chazal_Lift} If $C$ is $\delta$-simplicial, then for any stable and homotopy invariant distance $d$ on $\R$-spaces, we have $d(X,Y)\leq \delta$. \end{proposition}

In view of \cref{Ex:Rips_Simplicial}, \cref{Prop:Chazal_Lift} implies \cref{Prop:Two_Axioms}.  As mentioned in the introduction, \cref{Prop:Chazal_Lift} also yields filtration-level formulations of stability results for \v Cech, Dowker, and Witness complexes, via arguments appearing in \cite[Section 4]{chazal2014persistence}.

\begin{remark}
The proof of \cref{Prop:Two_Axioms} also adapts readily to show that if $C$ is $\delta$-simplicial, then $X$ and $Y$ are $\delta$-homotopy interleaved, which strengthens \cref{Prop:Chazal_Multi_Valued}.  Note, however, that as stated, \cref{Prop:Chazal_Lift} does not quite strengthen \cref{Prop:Chazal_Multi_Valued}, since the latter guarantees the existence of a $\delta$-interleaving, not just a bound on the interleaving distance.  This is an artifact of the way we've chosen to set up our axiomatic framework; a variant of our framework can be given which axiomatically treats $\delta$-homotopy interleavings, rather than the homotopy interleaving distance, and thus provides an axiomatic strengthening of \cref{Prop:Chazal_Lift}.  We have opted to instead work with our distance-based approach because it is less technical.
\end{remark}

\subsection{Sparse Approximation of \v Cech and Rips Filtrations}\label{Sec:Sparse_Approximation}
Vietoris-Rips filtrations grow large very quickly as the number of points grows: For a finite metric space $P$, $\Rips(P)$ is a filtration of the full simplex with vertices $P$, which has $2^{|P|}-1$ faces.  Even to compute only $1^{\mathrm{st}}$ persistent homology, we need the 2-skeleton of $\Rips(P)$, which has ${{|P|}\choose{3}}=O(|P|^3)$ faces.  Since we are often interested in computing the persistent homology of finite metric spaces with tens thousands of points, this size can be prohibitively large.  
This motivates the search for a smaller filtration whose persistent homology is (exactly or approximately) the same as that of $\Rips(P)$.  There is a large literature on this.  In early work on TDA, landmark-based constructions called \emph{witness filtrations} were used as surrogates for Vietoris-Rips or \v Cech filtrations \cite{chan2013topology,carlsson2008local,de2004topological,chazal2008towards,boissonnat2009manifold}. 

A key advance was made in work of Sheehy \cite{sheehy2013linear} which, for finite metric spaces $P$ of constant doubling dimension, introduced a simplicial filtration whose size is linear in $|P|$ and whose persistent homology is a provably good approximation to that of $\Rips(P)$.   Sheehy's approach is based on the idea of \emph{hierarchical net trees} from computational geometry, and can be interpreted as an adaptive landmarking strategy, where landmarks are removed as the scale parameter of the Rips complex increases.  Subsequently, Cavanna, Jahanseir, and Sheehy \cite{cavanna2015geometric} introduced a simpler and more intuitive geometric variant of Sheehy's sparsification construction, which provides sparse approximations of both \v Cech and Rips filtrations.
In the years following Sheehy's initial work, many other sparse approximation constructions have appeared, applying to different filtration types (e.g., \v Cech, Delaunay, and Dowker filtrations) and offering different tradeoffs between filtration size, approximation bounds, and practical performance \cite{choudhary2021improved,brehm2018sparips,sheehy2021sparse,choudhary2019improved,botnan2015approximating,brun2019sparse,choudhary2019polynomial,dey2019simba,dey2014computing}.  

In nearly all of these works, the results are formulated directly in terms of barcodes or persistence modules, not on the level of filtrations.  Yet the arguments always proceed at the level of filtrations and involve some homotopy theoretic step, such as an application of the nerve theorem or a contiguity argument.  We hypothesize that all of these results lift without serious difficulty to the level of filtrations, using the language of homotopy interleavings.  Here, we verify this hypothesis for just one sparse approximation result, namely the geometric sparse filtration of \cite{cavanna2015geometric}.  In fact, we will observe that the argument given in \cite{cavanna2015geometric} lifts almost immediately to the level of filtrations.

\subsubsection*{Persistent Nerve Theorem}
The essential homotopy theoretic ingredient will be a version of the \emph{persistent nerve theorem}; we begin by discussing this.  The standard formulation of the persistent nerve theorem in the TDA literature \cite{chazal2008towards} concerns open covers.  However, 
to analyze the sparse nerve construction of \cite{cavanna2015geometric}, it is more convenient to work with a version of the persistent nerve theorem for closed, convex covers, \cref{thm:persistentnerve} below. This version is TDA folklore, but a proof appears in a recent paper by Bauer et al. \cite{bauer2022unified}, which also proves several other versions of the persistent nerve theorem. 

\begin{definition}[Good Cover of a Diagram of Spaces]\label{Def:_Cover_of_Filtration}~
Let $\I$ be a small category.  A \emph{cover} of a functor $X:\I\to \Top$ indexed by a set $S$ is a collection of functors 
\[U=\{U^s:\I\to \Top\}_{s\in S}\] such that for each $a\in \I$, $\{U^s_a \mid s\in S\}$ is a cover of $X_a$.  We say $U$ is \emph{good} if $X$ takes values in a fixed Euclidean space, $S$ is finite, and $U^s_a$ is closed and convex for all $s\in S$ and $a\in \I$.
\end{definition}

The usual definition of a nerve extends immediately to a \emph{nerve} $\Ner\left(U\right):\I\to \mathbf{Simp}$ of any cover $U$ of a functor $X:\I\to \Top$.
 
\begin{theorem}[Persistent Nerve Theorem~\cite{bauer2022unified}]\label{thm:persistentnerve}
If $U$ is a good cover of $X:\I\to \Top$, then $X$ and $\Ner(U)$ are weakly equivalent.
\end{theorem}

\begin{example}\label{Ex:Cech}
Let us fix $p\in [1,\infty]$.  For $x\in \R^n$ and $r\in \R$, let $B(x,r)$ be the closed $\ell^p$-ball of radius $r$ centered at $x$, i.e., \[B(x,r)=\{y\in \R^n\mid \|x-y\|_p\leq r\}.\] 
For $P\subset \R^n$, consider the \emph{offset filtration} $O(P):[0,\infty)\to \Top$ given by 
\[O(P)_r=\bigcup_{x\in P} B(x,r).\]
Note that if $P$ is compact, then $O(P)=\Sb(d_P)$, where $d_P:\R^n\to \R$ is the distance function to $P$.  

The set of filtrations $U:=\{O(\{x\})\}_{x\in P}$ is a cover of $O(P)$, and if $P$ is finite, it is a good cover.  We call 
$\Ner(U)$ the \emph{\v Cech filtration} of $P$ and denote it as $\Cech(P)$.  \cref{thm:persistentnerve} tells us that if $P$ is finite, then $\Cech(P)\simeq O(P)$.
\end{example}

\subsubsection*{The Sparse Filtration Construction} We now explain the sparse filtration construction of \cite{cavanna2015geometric}.    
Given $P\subset \R^n$ finite and a parameter $\epsilon\in (0,1)$, \cite{cavanna2015geometric} defines a simplicial filtration $S^\epsilon(P):[0,\infty)\to \mathbf{Simp}$ whose size is linear in $|P|$ for $P$ of constant doubling dimension; the parameter  $\epsilon\in (0,1)$ controls the sparsity of the construction. 

To construct $S^\epsilon(P)$, we first choose a \emph{greedy permutation} of $P$, i.e., an ordering $P=\{p_1,\dots,p_m\}$ such that for all $i\geq 2$, $p_i$ is a furthest point from $\{p_1,\dots,p_{i-1}\}$.  Let $\lambda_1=\infty$, and for $i\in \{2,\ldots,m\}$, let $\lambda_i=\min_{j<i} d_P(p_i,p_j)$.

For $r\in [0,\infty)$, let 
\begin{align*}
b_i(r)&=
\begin{cases}
B(p_i,r)&\textup{ if $0\leq r\leq \lambda_i(1+\epsilon)/\epsilon$},\\
B(p_i,\lambda_i(1+\epsilon)\epsilon)&\textup{ if $\lambda_i(1+\epsilon)/\epsilon\leq r\leq \lambda_i(1+\epsilon)^2/\epsilon$},\\
\emptyset&\textup{ otherwise},
\end{cases}\\
U^i_r&=\bigcup_{0\leq q\leq r} b_i(q)\times \{q\}\subset \R^{n+1}.
\end{align*}

For $r\in [0, \lambda_i(1+\epsilon)/\epsilon]$, $U^i_r$ is an $\ell^p$-cone, and for $r>\lambda_i(1+\epsilon)/\epsilon$, $U^i_r$ is the union of an $\ell^p$-cone and an $\ell^p$-cylinder whose intersection is a common face.  For all $r\in [0,\infty)$, $U^i_r$ is closed and convex.

The spaces $U^i_r$ assemble into a filtration $U^i:[0,\infty)\to \R$.  Let $U=\{U^i\}_{i\in [m]}$.  We define the filtration $S^\epsilon(P)$ to be $\Ner(U).$  

\subsubsection*{Approximation Guarantee}
It is shown in \cite{cavanna2015geometric} that the bottleneck distance between the barcodes of $S^\epsilon(P)$ and $\Cech(P)$ are close on the log scale.  To give the precise statement, let $\exp\colon \R\to (0,\infty)$ denote the exponential function $x\mapsto 2^x$.
 
\begin{theorem}[\cite{cavanna2015geometric}]\label{Thm:Sheehy_Sparse}
For any finite $P\subset \R^n$ and $i\geq 0$,
\[d_B(\B{}_i(S^\epsilon(P)\comp\exp),\B{}_i(\Cech(P) \comp \exp))\leq\log(1+\epsilon).\]
\end{theorem}

We now observe that \cref{Thm:Sheehy_Sparse} strengthens to an approximation result on the filtration level with essentially the same proof. 

\begin{theorem}\label{Thm:Sparse_Refinement}
For any stable, homotopy invariant distance $d$ on $\R$-spaces and finite $P\subset \R^n$,
\[d(S^\epsilon(P)\comp \exp,\Cech(P)\comp \exp)\leq \log(1+\epsilon).\]
\end{theorem}

\begin{proof}[Proof of \cref{Thm:Sparse_Refinement}]
Consider the filtration $X:[0,\infty)\to \Top$ given by \[X_r=\bigcup_{i\in [m]} U^i_r.\]
$U$ is clearly a good cover of $X$.  Hence, $S^\epsilon(P)\simeq X$ by \cref{thm:persistentnerve}.

Next, define a filtration $Y:[0,\infty)\to \Top$ by 
\begin{align*}
Y^i_r&=
\begin{cases}
B(p_i,r)&\textup{ if $0\leq r\leq \lambda_i(1+\epsilon)/\epsilon$,}\\
B(p_i,\lambda_i(1+\epsilon)\epsilon)&\textup{ if $\lambda_i(1+\epsilon)/\epsilon\leq r$.}
\end{cases}\\
Y_r&=\bigcup_{i\in [m]} Y^i_r
\end{align*}
For $r\in [0,\infty)$, consider the coordinate hyperplane \[H_r=\{x\in \R^{n+1}\mid x_{n+1}=r\}.\]  It is shown in \cite[Corollary 2]{cavanna2015geometric} that  $X\cap H_r=Y_r\times \{r\}$.  It follows that the projection $\R^{n+1}\to \R^n$ onto the first $n$ coordinates restricts to a continuous surjection $\rho_r: X_r\to Y_r$.  In fact $\rho_r$ is a homotopy equivalence, because $X_r$ deformation retracts onto $X\cap H_r$ via a straight-line homotopy.  Moreover, the maps $\rho_r$ assemble into an objectwise homotopy equivalence $X\to Y$.  By \cref{thm:persistentnerve}, we also have $O(P)\simeq \Cech(P)$.  It is proven in \cite[Corollary 2]{cavanna2015geometric} that $Y\circ\exp$ and $O(P)\circ \exp$ are $\log(1+\epsilon)$-interleaved via subspace inclusions.  Given this, one can use \cref{Prop:ImportantEquivalence} to check that $Y\circ\exp$ and $O(P)\circ\exp$ are the respective sublevel filtrations of functions $f,g:\R^n\to \R$ such that $\|f-g\|_\infty \leq \log(1+\epsilon)$.  The result now follows from the stability and homotopy invariance of $d$.
\end{proof}

\begin{remark}
Via the Kuratowski embedding, any finite metric space $P$ embeds isometrically into $\R^{|P|}$ with the $\ell^\infty$ metric, where Rips and \v Cech filtrations are equal \cite{chazal2009gromov}.  Hence, the sparse approximation of the \v Cech filtration considered above also provides a sparse approximation of the Rips filtration.  
\end{remark}

\subsection{A Weak Law of Large Numbers for Filtrations}\label{Sec:WLLN}
As a final application of our axioms for distances on $\R$-spaces, we prove a simple weak law of large numbers for \v Cech filtrations.  We use a standard union bound argument.  

Let $M\subset \R^n$ be an $m$-dimensional compact Riemannian manifold and let $d$ denote the Euclidean metric on $\R^n$.  We regard $M$ as a probability space with respect to the normalized $m$-dimensional Hausdorff measure $\mu$.  Let $P^m$ be an i.i.d. sample of $M$ of size $m$.  

\begin{proposition}\label{Prop:WLLN}
For any stable and homotopy invariant distance $d$ on $\R$-spaces, $d(\Cech(P^m),O(M))$ converges in probability to 0 as $m\to \infty$.
\end{proposition}

\begin{proof}
Let $\epsilon>0$.  We need to show that \[\lim_{n\to \infty} \mathbb{P}(d(\Cech(P^m),O(M))> \epsilon)=0.\]  Consider the cover of $M$ by open balls of radius $\epsilon/2$.  Since $M$ is compact, there exits a finite subcover $U^1,\ldots, U^l$ of $M$.  Let \[c=\min(\mu(U^1),\ldots,\mu(U^l)),\] and note that $c>0$.  Let $E_m^i$ be the event that $P^m\cap U^i=\emptyset$, and let \[E_m=\bigcup_{i=1}^l E_m^i.\]  In the complement of the event $E_m$, we have $d_H(P^m,M)\leq\epsilon$, where $d_H$ denotes the Hausdorff distance.  Thus, using the notation for distance functions of \cref{Ex:Cech}, we have $\|d_{P^m}-d_{M}\|_{\infty}\leq \epsilon$.  By the stability of $d$, this implies that $d(O(P^m),O(M))\leq \epsilon$.  The persistent nerve theorem (\cref{thm:persistentnerve}) tells us that $\Cech(P^m)\simeq O(P^m)$.  Hence, by the homotopy invariance of $d$, in the complement of the event $E_m$ we have that $d(\Cech(P^m),O(M))\leq \epsilon$.  Thus, \[\mathbb{P}(d(\Cech(P^m),O(M))> \epsilon)\leq \mathbb{P}(E_m)\leq \sum_{i=1}^l \mathbb{P}(E_m^i)\leq l(1-c)^m,\] which implies that \[\lim_{m\to \infty} \mathbb{P}(d(\Cech(P^m),O(M))> \epsilon) \leq \lim_{m\to \infty} l(1-c)^m=0.\qedhere\]
\end{proof}

\section{Homotopy Commutative and Homotopy Coherent Interleavings}\label{Sec:Incoherent_Interleavings_And_Rectification}

A simpler candidate definition of the homotopy interleaving distance
can be formulated in terms of \emph{homotopy commutative} interleaving diagrams,
i.e., interleaving diagrams taking values in $\Ho(\Top)$, the homotopy
category of spaces.  In this section, we explore this 
definition and explain why we expect that it is not equal to $d_{HI}$, hence not universal.\footnote{As noted in \cref{Sec:Non-Universality}, following the release of the first version of this paper, Lanari and Scoccola have \cite{lanari2020rectification} have shown that the two distances are indeed unequal.  Aside from some minor corrections and light polish, we have left the text of this section as it was in the first version, and will not mention the results of \cite{lanari2020rectification} again.}

In homotopy theory, one typically avoids working with homotopy commutative diagrams, and works instead with
 richer objects called \emph{homotopy coherent diagrams}, which are homotopically better behaved.  At the end of this section, we observe that $d_{HI}$ admits an equivalent definition in terms of homotopy coherent interleaving diagrams.  

\subsection{Homotopy Commutative Interleavings}

As in \cref{Sec:Model_Cats}, let $\Pi \colon \Top \to
\Ho(\Top)$ denote the functor that takes a space to its
representative in the homotopy category (i.e., the localization with respect to the standard
weak equivalences in $\Top$).

\begin{definition}
A \emph{homotopy commutative $\delta$-interleaving} between $\R$-spaces $X$ and $Y$ is a $\delta$-interleaving in $\Ho(\Top)$ between 
  $\Pi X$ and $\Pi Y$.
 \end{definition}
This definition induces a definition of an interleaving
distance $d_{HC}$ on $\R$-spaces, in the usual way.  The distance $d_{HC}$ is stable, homotopy invariant, and homology bounding.  

\begin{remark}
For many choices of small category $\I$, the natural functor $\Ho(\Top^\I)\to (\Ho(\Top))^\I$ discards some higher order homotopy theoretic information, and this can make it difficult to work directly in the category $(\Ho(\Top))^\I$.  For example, whereas we define 
$\hocolim$ as a functor $\Ho(\Top^\I)\to \Ho(\Top)$, $\Ho(\Top)$
is not cocomplete (it does not even have all pushouts), and so
homotopy colimits generally cannot be defined as functors out of $(\Ho(\Top))^\I$.
\end{remark}

In view of the above remark, one does not expect homotopy commutative $\delta$-interleavings to be homotopically well-behaved objects.  Thus, the definition of $d_{HC}$, while especially simple, is somewhat unnatural.  Nevertheless, one might wonder about the relationship between $d_{HC}$ and $d_{HI}$.  By the universality of $d_{HI}$, we have that $d_{HC}\leq
d_{HI}$.  We expect that $d_{HC}\ne d_{HI}$.  Though we do not have
a proof of this, we will present an example which shows that in the category of based topological spaces, 
homotopy commutative interleavings needn't lift to homotopy interleavings; we
imagine that a similar example can be found which shows that
$d_{HC}<d_{HI}$.   

\subsection{Rectification of Homotopy Commutative Diagrams}
In the next two subsections, it will be convenient for us to work with the category $\Top_{*}$ of \emph{based} CGWH topological spaces and its associated homotopy category $\Ho(\Top_{*})$ \cite[Remark 3.10]{dwyer1995homotopy}.

For $X\colon \I\to \Ho(\Top_*)$, a
\emph{rectification of $X$} is a functor $\tilde X\colon \I\to \Top_*$ such
that $\Pi \tilde X\cong X$.  Rectifications do not always exist.  The
following folklore example, brought to our attention by Tyler Lawson, demonstrates this: 

\begin{example}\label{Ex:No_Rectification_Cube}
Consider the sequence of based maps
\begin{equation}\label{Eq:Rectification}
S^4\xrightarrow{f}S^4\xrightarrow{g}S^3\xrightarrow{h}S^3,
\end{equation}
where $f$ and $h$ are degree 2 maps, and $g$ is the suspension of the Hopf map.  The maps $g\comp f$ and $h\comp g$ are null homotopic.
 
Let $\SimpCat{1}$ denote the category with object set $\{0,1\}$ and a single non-identity morphism $0\to 1$.  We extend the sequence (\ref{Eq:Rectification}) above to a diagram indexed by the cube $\SimpCat{1}^3$ which commutes up to homotopy, as follows:
 \begin{equation}\label{Eq:No_Rect_Cube}
\begin{tikzcd}
& {*} \ar[rr] && S^3 \\
{*} \ar[rr]\ar[ur] &&
{*} \ar[ur] & \\
& {*} \ar[rr] \ar[uu] && S^3 \ar[uu,swap,"h"] 
\\ S^4 \ar[ru]\ar[uu]\ar[rr,swap,"f"] &&
S^4 \ar[uu] \ar[ru,swap,"g"] &
\end{tikzcd}
\end{equation}
A homotopy commutative diagram of this form can be rectified if and only if the Toda bracket $\langle f,g,h \rangle$ contains the trivial map \cite{82516}.  In this case, the Toda bracket consists of the non-zero   element of $\pi_5(S^3)\cong \Z/2\Z$ \cite{toda1962composition}, so the diagram cannot be rectified.
\end{example}

\subsection{Rectification of Interleavings}

To establish that $d_{HC}=d_{HI}$ for based spaces, it would suffice to show that for
any $X,Y\colon \R\to \Top_*$ and homotopy commutative $\delta$-interleaving
$W\colon\I^\delta\to \Ho(\Top_*)$ between $X$ and $Y$, there exists a
rectification $\tilde W\colon\I^\delta\to \Top_*$ of $W$ such that
$\tilde W\circ E^1\htp X$ and $\tilde Z\circ E^2\htp Y$.  However, the next
example shows that such a rectification does not always exist, even if we ignore the conditions on the restrictions
$\tilde W\circ E^i$.

\begin{example}\label{Ex:Unrectifiable}
We can define homotopy commutative interleavings between functors $\ZCat\to \Top_*$ in the same way as for $\R$-spaces.  Leveraging \cref{Ex:No_Rectification_Cube}, we give an example of functors $X,Y\colon \ZCat\to \Top_*$ and a homotopy commutative 2-interleaving between $X$ and $Y$ which cannot be rectified.  It's easy to see that 
that this example extends to yield an unrectifiable homotopy commutative interleaving in the $\RCat$-indexed case, as well.

Letting $f$, $g$, and $h$ be as in \cref{Ex:No_Rectification_Cube}, consider the following homotopy commutative diagram:
\begin{equation}\label{Eq:Interleaving_Form}
\begin{tikzcd}
& & {*} \ar[r] \ar[drr] & S^3 \ar[drr,"h"] & & & \\
S^4 \ar[r,swap,"f"] \ar[urr] & S^4 \ar[r] \ar[urr,swap,"g"] & {*} \ar[r] & {*} \ar[r] & {*} \ar[r] & S^3. \\
\end{tikzcd}
\end{equation}
This diagram clearly extends to a homotopy commutative $2$-interleaving between a pair of functors $X,Y\colon \ZCat\to \Top_*$, by taking the remaining spaces in $X$ and $Y$ to be points.  To check that this cannot be rectified, it suffices to check that \eqref{Eq:Interleaving_Form} cannot be rectified.  To do so, we will observe that \eqref{Eq:Interleaving_Form} can be rectified only if  \eqref{Eq:No_Rect_Cube} can be rectified.  Since \eqref{Eq:No_Rect_Cube} cannot be rectified, this establishes that \eqref{Eq:Interleaving_Form} cannot be rectified.

Let us draw (\ref{Eq:Interleaving_Form}) in a different way, placing each object at a vertex of the cube:
\begin{equation}\label{Eq:Cube_Form}
\begin{tikzcd}
& {*} \ar[rr] && S^3 \\
{*} \ar[rr] &&
{*} \ar[ul] & \\
& {*} \ar[rr] \ar[uu] && S^3 \ar[uu,swap,"h"] 
\\ S^4 \ar[ru] \ar[rr,swap,"f"] &&
S^4.  \ar[uull] \ar[ru,swap,"g"] &
\end{tikzcd}
\end{equation}
Noting that by composition, a commutative diagram of the form
\[
\begin{tikzcd}
a \ar[r] & b \\
c \ar[r] & d \ar[ul] \\
\end{tikzcd}
\]
determines one of the form
\[
\begin{tikzcd}
a \ar[r] & b\\
c \ar[r] \ar[u] & d, \ar[u] \\
\end{tikzcd}
\]
it's clear that if a rectification of \eqref{Eq:Cube_Form} were to
exist, it would yield a rectification of  \eqref{Eq:No_Rect_Cube}.  
Thus \eqref{Eq:Interleaving_Form} cannot be rectified, as we wanted to
show.

Note however that there does exist a homotopy $1$-interleaving between
$X$ and $Y$, obtained by taking all maps between spaces in $X$ and $Y$
to be trivial.  Thus, $d_{HI}(X,Y)=d_{HC}(X,Y)=1$.  Consequently, this example does not establish that $d_{HC}
\neq d_{HI}$ for based spaces. 
\end{example}

\subsection{Homotopy Coherent Interleavings}\label{Sec:Hoco}

We close this section by giving an alternative interpretation of the
homotopy interleaving distance in terms of homotopy coherent diagrams.
As discussed above, a homotopy commutative diagram cannot always be
rectified.  A natural question to ask, then, is what additional
information is required for rectification.  An old theorem of Vogt~\cite{vogt1973homotopy,cordier1986vogt,riehl2018homotopy} provides an answer; it tells us that a functorial rectification exists for \emph{homotopy coherent
diagrams}.

Roughly speaking, a
homotopy coherent diagram is a homotopy commutative diagram together
with explicit choices of all homotopies, homotopies between the
homotopies, and so on.  A formal definition can be given using the language of simplicially enriched functors.  Given a small category $\I$, the homotopy coherent diagrams indexed by $\I$ form a category $\Coh(\I)$ whose morphisms are homotopy classes of homotopy coherent natural transformations.
  
Let $\widetilde\Ho(\Top^I)$ denote the
localization of $\Top^\I$ with respect to objectwise homotopy
equivalences, and recall that $\Ho(\Top^\I)$ denotes the localization of $\Top^\I$ with respect to objectwise weak homotopy
equivalences.   Using Whitehead's theorem, it can be checked that two diagrams in $\Top^\I$ taking values in cofibrant spaces (e.g., CW
complexes) are isomorphic in $\widetilde\Ho(\Top^\I)$ if and only if they are isomorphic in $\Ho(\Top^\I)$.

Vogt's theorem gives an equivalence
\[
\Coh(\I)\to \widetilde \Ho(\Top^\I).
\]
The theorem thus tells us that we can study homotopy coherent diagrams 
using strict diagrams and zig-zags of objectwise homotopy
equivalences.  There are generalizations of Vogt's theorem to diagrams in arbitrary model
categories, e.g., see~\cite[Proposition 4.2.4.4]{LurieHTT}, but we will not need them here.

Guided by these ideas, we formulate a
homotopy-coherent definition of interleavings.

\begin{definition}
A \emph{homotopy coherent
  $\delta$-interleaving} between functors $X,Y\colon \RCat \to \Top$ is a homotopy coherent diagram $Z\in \Coh(\I^\delta)$ such that
$Z\comp E^0\cong X$ and $Z\comp E^1\cong Y$ in $\Coh(\R)$.
\end{definition}

Using basic properties of the the equivalence $\Coh(\I^{\delta})\to \Ho(\Top^{\I^\delta})$
provided by Vogt's theorem, it is
straightforward to prove the following comparison.

\begin{proposition}\label{Prop:d_HI_Characterization}
If there exists a homotopy coherent $\delta$-interleaving between
$\R$-spaces $X$ and $Y$, then there exists a $\delta$-homotopy-interleaving
between $X$ and $Y$.  If $X$ and $Y$ are objectwise cofibrant, then the converse is also true.
\end{proposition}

For objectwise cofibrant $\R$-spaces,
\cref{Prop:d_HI_Characterization} yields a characterization of $d_{HI}$ in
terms of homotopy coherent interleavings.

\section{Towards a Persistent Whitehead Theorem}\label{Sec:Persistent_Whitehead}
To conclude the paper, we explore of the problem of 
formulating a persistent Whitehead theorem.  

\subsection{Persistent Homotopy Groups}
We first need to define persistent homotopy groups.  In the setting of based spaces, this is straightforward.

\begin{definition}
For a functor $X\colon \RCat\to \Top_*$ and $i\geq 0$, we call the composite functor $\pi_i X$ the \emph{$i^{\rm{th}}$ based persistent homotopy group} of $X$.  For $i > 0$, this is a functor $\RCat\to \Grp$, while $\pi_0$ takes values in $\Set$.  
\end{definition}

A related definition of persistent homotopy group appears in \cite{letscher2012persistent}, but concerns only pairs of indices in $\R$.  We have seen that interleavings are defined on objects of $\C^\RCat$, for arbitrary categories $\C$.  Thus, interleavings can be defined in the usual way for based persistent homotopy groups.

\begin{remark}[Unbased Analogues of Persistent Homotopy Groups]
Let us say $X\colon \RCat\to \Top_*$ is \emph{(path) connected} if for each $r\in \R$, $X_r$ (path) connected.  For $X$ path connected, our definition of $\pi_i X$ is reasonable.  However, $\R$-spaces arising in TDA are rarely connected.  For $X$ not connected, the isomorphism type of $\pi_i X$ depends on the choice of basepoints in $X$, and may miss important topological information in components of the spaces $X_r$ not containing the basepoint.  Thus, we wish to define an unbased version of the persistent homotopy group, which keeps track of information at all components.  We also wish to give a definition of interleavings between these objects.   

One way to proceed is to use the definition of a \emph{local system}, as given in \cite[Section 4]{isaksen2001model}.  This definition is functorial, so one can associate a \emph{persistent local system} $\Pi_i X$ to an $\R$-space $X$ for each $i\geq 0$.  Moreover, one has a natural notion of equivalence of local systems, and using this, one can give a definition of interleavings between persistent local systems similar to our definition of the homotopy interleaving distance.  Our triangle inequality argument for the homotopy interleaving distance adapts to give the triangle inequality for this interleaving distance.

That said, a careful study of unbased persistent homotopy groups and their interleavings is beyond the scope of this work; below we restrict attention to connected $\R$-spaces and work with based persistent homotopy groups.
\end{remark}

\subsection{Persistent Whitehead Conjectures}

In a model category $\C$, for cofibrant-fibrant objects $X,Y\in \C$ there is a well-behaved abstract notion of homotopy of morphisms $f,g\colon X\to Y$, generalizing the usual definition of homotopy for maps of topological spaces; see for example \cite[Section 4]{dwyer1995homotopy}.  As in the case of spaces, we write $f\htp g$.  This in turn yields a definition of homotopy equivalence.  The axioms of a model category imply an abstract Whitehead theorem~\cite[Proposition 1.2.8]{hovey}:

\begin{theorem}[Whitehead theorem for model categories]\label{Thm:Abstract_Whitehead}
For any model category $\C$, a weak equivalence between cofibrant-fibrant objects in $\C$ is a homotopy equivalence.
\end{theorem}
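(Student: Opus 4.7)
The plan is to follow the classical strategy of factoring the weak equivalence into an acyclic cofibration followed by an acyclic fibration, and then to prove separately that each piece is a homotopy equivalence in the model-categorical sense.

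First I would apply the functorial factorization axiom (dual to the one given in the excerpt) to write $f\colon X\to Y$ as $f=p\circ i$ where $i\colon X\to Z$ is an acyclic cofibration and $p\colon Z\to Y$ is a fibration. The two-out-of-three property for weak equivalences forces $p$ to be a weak equivalence as well, hence an acyclic fibration. I would then verify that $Z$ is cofibrant-fibrant: cofibrations compose, so $\emptyset\to X\to Z$ is a cofibration; and fibrations compose, so $Z\xrightarrow{p}Y\to *$ is a fibration. This reduces the theorem to the two claims that acyclic fibrations and acyclic cofibrations between cofibrant-fibrant objects are homotopy equivalences, since homotopy equivalences are closed under composition.

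For the first claim, given an acyclic fibration $p\colon Z\to Y$ with $Y$ cofibrant, the lifting axiom applied to the square with sides $\emptyset\to Z$ and $\mathrm{id}_Y\colon Y\to Y$ produces a section $s\colon Y\to Z$ with $ps=\mathrm{id}_Y$. To produce a homotopy $sp\simeq \mathrm{id}_Z$, I would introduce a cylinder object for $Z$ by factoring the fold map $Z\sqcup Z\xrightarrow{\nabla}Z$ as a cofibration $j\colon Z\sqcup Z\to \mathrm{Cyl}(Z)$ followed by an acyclic fibration $q\colon \mathrm{Cyl}(Z)\to Z$. The square with top map $[sp,\mathrm{id}_Z]\colon Z\sqcup Z\to Z$, left map $j$, right map $p$, and bottom map $pq$ commutes (using $ps=\mathrm{id}_Y$), and the lifting property of the acyclic cofibration $j$ against the fibration $p$ delivers the required homotopy $H\colon \mathrm{Cyl}(Z)\to Z$. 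The dual argument, using a path object factorization of the diagonal $Z\to Z\times Z$ and the lifting property of an acyclic cofibration against an acyclic fibration, handles the case of an acyclic cofibration between cofibrant-fibrant objects.

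The main obstacle is really bookkeeping rather than a deep difficulty: one must be careful that the cylinder (respectively path) object is constructed on an object that is itself cofibrant-fibrant, so that the homotopy produced by lifting is an honest left (resp.\ right) homotopy and agrees with the abstract homotopy relation used to define homotopy equivalence in $\C$. Once both claims are in hand, writing $f = p\circ i$ expresses $f$ as a composite of homotopy equivalences, and hence as a homotopy equivalence, completing the proof.
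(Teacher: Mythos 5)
The paper does not prove this result; it cites it as \cite[Proposition~1.2.8]{hovey}, so there is no in-paper argument to compare against. Your proposal reconstructs the standard proof (essentially Hovey's): factor the weak equivalence as an acyclic cofibration followed by an acyclic fibration with cofibrant--fibrant middle, show each piece is a homotopy equivalence via a section/retraction obtained by lifting, and then produce the missing homotopy by lifting against a good cylinder (resp.\ path) object. The strategy and the squares you write down are correct, and the compositional reduction is fine.

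There is a small but real imprecision in the lifting steps that you should fix. In the first claim you factor the fold map as a cofibration $j\colon Z\sqcup Z\to \mathrm{Cyl}(Z)$ followed by an acyclic fibration $q$; with that factorization $j$ is \emph{not} acyclic (the fold map is generally not a weak equivalence, so $j$ cannot be a weak equivalence if $q$ is). The lift $H$ exists because the cofibration $j$ has the left lifting property against the \emph{acyclic} fibration $p$, not because of ``the acyclic cofibration $j$ against the fibration $p$'' as you wrote. Dually, for the acyclic cofibration $i\colon X\to Z$ you should factor the diagonal as an acyclic cofibration followed by a fibration $(d_0,d_1)\colon \mathrm{Path}(Z)\to Z\times Z$; the lift then exists because the acyclic cofibration $i$ has the left lifting property against the fibration $(d_0,d_1)$, not because of an acyclic-against-acyclic lifting. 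In each case exactly one side of the square is acyclic, and which side it is determines which lifting axiom you invoke; getting this right is what makes the argument rigorous rather than merely plausible.
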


The situation in $\Top$ is somewhat simpler: We have a well-behaved homotopy relation (the familiar one) for maps between arbitrary spaces
$X$ and $Y$.  Moreover, in the standard model structure, a cofibrant
object is a retract of a CW-complex 
and all objects are fibrant.  Thus in
this setting, \cref{Thm:Abstract_Whitehead} recovers the classical
result that a weak equivalence between CW-complexes is a homotopy
equivalence.

These good properties carry over to the category of $\R$-spaces.  We have a canonical 
homotopy relation for morphisms between arbitrary $\R$-spaces, which is an equivalence relation; namely, morphisms of $\R$-spaces $f,g:X\to Y$ are homotopic if and only if there exists a morphism $X\times I\to Y$ which restricts to $f$ and $g$, respectively, on $X\times \{0\}$ and $X\times \{1\}$.  Moreover, all $\R$-spaces are fibrant in the projective model structure.

In light of this, we can introduce a persistent generalization of homotopy equivalence for arbitrary $\R$-spaces.  First, note that for $X$ an $\R$-space and $\delta\geq 0$, the
internal maps $\{X_{r,r+\delta}\}_{r\in \R}$ assemble into a morphism
$\varphi^{X,\delta}\colon X\to X(\delta)$.

\begin{definition}
Given $\R$-spaces $X$ and $Y$, we will say a pair of morphisms
$f\colon X\to Y(\delta)$ and $g\colon Y\to X(\delta)$ are
\emph{(inverse) $\delta$-homotopy equivalences} if \[g(\delta) \comp f
\htp \varphi^{X,2\delta}\quad \text{ and }\quad f(\delta) \comp g \htp
\varphi^{Y,2\delta},\] where $f(\delta)\colon X(\delta)\to Y(2\delta)$
is the map induced by $f$, and is $g(\delta)$ defined analogously.
\end{definition}

\begin{remark}\label{Remark:Interleaving_Vs_Approximate_Homotopy}
It can be checked that if cofibrant $\R$-spaces $X$ and $Y$ are $\delta$-homotopy-interleaved, then $X$ and $Y$ are $\delta$-homotopy equivalent.  However, in view of homotopy coherence and rectification issues similar to those discussed in \cref{Sec:Incoherent_Interleavings_And_Rectification}, it is not clear to us whether the converse is true.  
\end{remark}

It is natural to wonder whether for $\R$-spaces, the Whitehead theorem extends to a persistent version as follows:

\begin{naive}\label{NaiveA}
For $X$ and $Y$ connected cofibrant $\R$-spaces, $\delta\geq 0$, and morphism $f\colon X\to Y(\delta)$ with $\pi_i f\colon \pi_iX\to \pi_iY(\delta)$ a $\delta$-interleaving morphism for all $i$, $f$ is a  $\delta$-homotopy equivalence.
\end{naive}

In view of \cref{Remark:Interleaving_Vs_Approximate_Homotopy} and the universality of the homotopy interleaving distance, one might also wonder whether the following is true:

\begin{naive}\label{NaiveB}
Given $X$, $Y$ and $f$ as in the previous conjecture, $X$ and $Y$ are $\delta$-homotopy-interleaved.
\end{naive}

However, the following example makes clear that both conjectures are far from true.  Let $\CWCat$ denote the category of CW-complexes and continuous maps.

\begin{example}\label{Ex:Whitehead}
We specify $X\colon \R\to \CWCat$, $Y\colon \R\to \CWCat$, and $f\colon X\to Y(1)$ satisfying the hypotheses of the above conjectures for $\delta=1$, with $X$ and $Y$ not $\epsilon$-homotopy equivalent, and hence also not $\epsilon$-homotopy-interleaved, for any $\epsilon$.

As a first step towards defining $Y$, for each $n\in \{1,2,\ldots \}$, we define a functor $Y^n\colon \RCat\to \CWCat$ as follows: 
\[Y^n_r:=
\begin{cases}
\underbrace{S^{2^i}\times S^{2^i}\times \cdots \times S^{2^i}}_{2^{n-i} \text{ copies}}&\textup{ for  }r\in [2i,2i+2),\ i\in \{0,1,\ldots n\}\\
*&\textup{ for  }r\in (-\infty,0)\cup [2n+2,\infty),\\
\end{cases}
\]
For $i\geq 0$, we have a map \[S^{2^i}\times S^{2^i}\to S^{2^{i+1}}=S^{2^i}\wedge S^{2^i},\] given by collapsing $S^{2^i}\vee S^{2^i}\subset S^{2^i}\times S^{2^i}$ to a point; here $\vee$ and $\wedge$ denote the wedge product and smash product, respectively.  For $i \in \{0,1,\ldots n-1\}$, $r\in [2i,2i+2)$, and $s\in [2i+2,2i+4)$, we take the internal map $Y^n_{r,s}$ to be the product of $2^{n-i-1}$ copies of this map.  The remaining internal maps in $Y^n$ are specified by composition.  

For example, regarding the torus $S^1\times S^1$ as a quotient of a square in the usual way, the map \[Y^1_{0,2}\colon S^1\times S^1\to S^2\] is the one induced by sending the whole boundary of the square  to a single point, and the map \[Y^2_{0,2}\colon S^1\times S^1\times S^1\times S^1 \to S^2\times S^2\] is equal to $Y^1_{0,2}\times Y^1_{0,2}$.

For all $i$, the map $S^{2^i}\vee S^{2^i}\hookrightarrow S^{2^i}\times S^{2^i}$ induces a surjection on all homotopy groups.  Thus, $\pi_iY^n_{r,r+2}$ is trivial for all $r\in \R$ and $i\geq 0$.  Defining $X':\R\to \CWCat$ by $X'_r=*$ for all $r$, it follows that the trivial morphisms $X'\to Y^n(1)$ and $Y^n\to X'(1)$ induce $1$-interleavings on all based persistent homotopy groups.
 
However, $X'$ and $Y^n$ are not $\delta$-homotopy equivalent for any $\delta<n+1$.  To see this, assume that all spheres in the definition of $Y^n$ are given the usual minimal CW-structure, and note that $Y^n_r=S^{2^n}$ for $r\in [2n,2n+2)$.  The map $Y^n_{0,r}$ acts by collapsing the $(2^n-1)$-skeleton of $Y^n_0$ to a point, so it follows from an easy cellular homology computation that $H_{2^n}(Y^n)_{0,r}\ne 0$.  Thus, $H_{2^n}(Y^n)$ and the trivial module $H_{2^n}(X')$ are not $\delta$-interleaved.  It is straightforward to check that a $\delta$-homotopy equivalence between $\R$-spaces $A$ and $B$ induces a $\delta$-interleaving between $H_iA$ and $H_iB$ for all $i$.  Therefore, $X'$ and $Y^n$ are not $\delta$-homotopy equivalent, as claimed.  On the other hand, $X'$ and $Y^n$ are strictly $(n+1)$-interleaved, via trivial morphisms.  

We next construct $Y':\R\to \CWCat$ such that the trivial morphisms $X'\to Y'(1)$ and $Y'\to X'(1)$ induce $1$-interleavings on all based persistent homotopy groups, but $H_iX'$ and $H_iY'$ are not $\epsilon$-interleaved for any finite $\epsilon$.  To do so, we simply patch together the non-trivial portions of each $Y^n$, taking each morphism between spaces from two different $Y^n$ to be trivial; that is, we take $Y'_r:=Y^1_r$ for $r\in (-\infty,4)$, $Y'_r:=Y^2_{r-4}$ for $r\in [4,10)$, and so on.

Finally, we take $X$ and $Y$ to be cofibrant replacements of $X'$ and $Y'$, respectively, and let $f:X\to Y(1)$ be the cofibrant replacement of the trivial map $X'\to Y'(1)$.  
\end{example}

\cref{Ex:Whitehead} motivates the following weaker pair of persistent Whitehead conjectures:

\begin{conjecture}[Persistent Whitehead Conjectures]\label{Conj:Whitehead}
Suppose we are given connected cofibrant $\R$-spaces $X,Y\colon \RCat\to \CWCat$ with each $X_r$ and $Y_r$ of dimension at most $d$, and $f\colon X\to Y(\delta)$ with $\pi_i f\colon \pi_iX\to \pi_iY(\delta)$ a $\delta$-interleaving morphism for all $i$.  Then there is a constant $c\geq 1$, depending only on $d$, such that 
\begin{enumerate}[(i)]
\item the map $X\to Y(c\delta)$ induced by $f$ is a $c\delta$-homotopy equivalence,
\item $X$ and $Y$ are $c\delta$-homotopy-interleaved.
\end{enumerate}
\end{conjecture}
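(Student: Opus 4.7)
The plan is to attempt (i) via an obstruction-theoretic cellular induction modeled on the classical proof of Whitehead's theorem, and then to upgrade (i) to (ii) via a rectification argument exploiting the bounded dimension. Since the naive versions of the statement fail (\cref{Ex:Whitehead}), the argument must use the dimension bound $d$ essentially, most plausibly by forcing the relevant tower of obstructions or Postnikov sections to be finite.

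For (i), I would construct a homotopy inverse $g \colon Y \to X(c\delta)$ by induction on the cellular dimension of $Y$, proceeding in parallel for all $r \in \R$. On $0$-cells, the hypothesis that $\pi_0 f$ is a $\delta$-interleaving morphism lifts each vertex of $Y_r$ to a point of $X_{r+c_0 \delta}$ for some $c_0$. To extend $g$ from the $(k-1)$-skeleton to the $k$-skeleton, each $k$-cell $\sigma \subset Y_r$ produces, via $g$ on $\partial \sigma$, an element of $\pi_{k-1}$ of an appropriate $X_s$; the $\delta$-interleaving on $\pi_{k-1}$ (combined with the fact that $\sigma$ itself bounds the image, once pushed far enough forward in $Y$) trivializes this obstruction after a further shift by a bounded multiple of $\delta$, permitting the extension to be chosen. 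Because cellular dimension is at most $d$, only $d+1$ such shifts are required, yielding $g$ with total shift $c\delta$ for a $c$ depending only on $d$. A similar induction, applied to the homotopies $g(c\delta) \circ f \htp \varphi^{X,2c\delta}$ and $f(c\delta) \circ g \htp \varphi^{Y,2c\delta}$ (which are maps out of cylinders of dimension $\leq d+1$), completes the argument for (i), possibly enlarging $c$.

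For (ii), once a $c\delta$-homotopy equivalence is in hand, one needs to upgrade it to a $c\delta$-homotopy-interleaving. By Vogt's theorem (\cref{Sec:Hoco}), this amounts to constructing a homotopy coherent $c\delta$-interleaving, i.e., filling in all higher coherence homotopies. The rectification obstructions live in homotopy groups of function complexes between truncations of $X$ and $Y$, and the bounded-dimension hypothesis makes the relevant Postnikov tower finite; an induction on Postnikov stages, parallel in spirit to the cellular induction for (i), should fill in the coherences at the cost of a further increase in $c$.

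The main obstacle, which is delicate for both (i) and (ii), is organizing the induction so that choices glue coherently in the $\R$-direction. In the classical Whitehead argument, the space of extensions over a given cell is contractible, and extensions can be chosen pointwise; in the persistent setting one must make these choices compatibly as $r$ varies, which amounts to working with $\R$-parametrized families of obstruction classes and extensions. I expect that working with a cofibrant replacement such as the one from \cref{Sec:Concrete_Model}, together with careful shift bookkeeping, will tame this. However, actually producing a constant $c$ that depends \emph{only} on $d$ (rather than, say, on the cellular complexity of $X$ and $Y$) is the crux: one must show that the obstruction-killing process does not drift unboundedly as one moves along the $\R$-parameter. The rectification step for (ii) presents a parallel coherence difficulty already flagged in \cref{Sec:Incoherent_Interleavings_And_Rectification}, again mitigated but not trivialized by bounded dimension.
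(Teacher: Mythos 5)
This statement is labeled a \emph{conjecture} in the paper, not a theorem, so there is no proof of it in the paper to compare yours against; indeed, \cref{Remark:Interleaving_Vs_Approximate_Homotopy} explicitly records that the relationship between $\delta$-homotopy equivalence and $\delta$-homotopy-interleaving (exactly the gap between parts (i) and (ii)) is unresolved. Your proposal is therefore best read as a program rather than a proof, and you are forthright that it leaves the hard points open.

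Two of those open points are serious enough to flag concretely. First, in the cellular induction for (i), the obstruction-killing step is not justified: the hypothesis gives a $\delta$-interleaving of the persistence \emph{modules} $\pi_{k-1}X$ and $\pi_{k-1}Y$ through the algebraic map $\pi_{k-1}f$, but when you attach a $k$-cell $\sigma$ to $Y_r$ and regard $g|_{\partial\sigma}$ as a class in $\pi_{k-1}$ of some shifted $X_s$, the interleaving on $\pi_{k-1}f$ does not directly say that this class dies after a bounded shift. The classical argument compresses cells using vanishing of \emph{relative} homotopy groups of the mapping cylinder of $f$; a persistent version would need an interleaving analogue of the long exact sequence and five-lemma to show the relative groups are $2\delta$-interleaved with zero, together with a persistent compression lemma making compatible choices across $r$. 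You gesture at this, but none of it is in place, and it is exactly where the bounded-dimension hypothesis must do its work in ruling out pathologies like \cref{Ex:Whitehead}. Second, for (ii) you propose to upgrade a $c\delta$-homotopy equivalence to a $c\delta$-homotopy-interleaving via Vogt's theorem and a Postnikov induction, but this is precisely the rectification difficulty that \cref{Sec:Incoherent_Interleavings_And_Rectification} and \cref{Remark:Interleaving_Vs_Approximate_Homotopy} identify as open; the assertion that bounded dimension ``should'' tame the coherence obstructions is a hope, not an argument. The proposal is a plausible plan of attack, but as written it does not resolve the conjecture.
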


\bibliographystyle{abbrv}
\bibliography{UH_Refs}

\end{document}